\documentclass[11pt]{article}

\usepackage[margin=1in]{geometry}
\usepackage{xcolor}
\usepackage{amsmath,amssymb,amsthm,mathtools}
\usepackage{stmaryrd}
\usepackage{enumitem}
\usepackage{hyperref}
\usepackage{mathrsfs}
\usepackage[nameinlink,capitalise,noabbrev]{cleveref}
\usepackage{amsmath,amssymb}
\usepackage{xcolor}
\usepackage{tikz}
\usepackage{float}

\usetikzlibrary{
  arrows.meta,
  calc,
  positioning,
  decorations.pathreplacing
}

\definecolor{fbblue}{RGB}{44, 107, 160}
\definecolor{fbgray}{RGB}{105, 105, 105}
\definecolor{fbred}{RGB}{190, 65, 60}
\definecolor{fbgreen}{RGB}{55, 135, 95}
\tikzset{
  maincurve/.style={line width=.9pt, fbblue},
  auxiliary/.style={line width=.55pt, dashed, fbgray},
  maparrow/.style={-{Stealth[length=2.2mm]}, line width=.55pt, fbblue},
  lightarrow/.style={-{Stealth[length=1.8mm]}, line width=.45pt, fbgray}
}

\hypersetup{
  colorlinks=true,
  linkcolor=blue!50!black,
  citecolor=blue!50!black,
  urlcolor=blue!50!black
}

\newtheorem{theorem}{Theorem}[section]
\newtheorem{proposition}[theorem]{Proposition}
\newtheorem{lemma}[theorem]{Lemma}
\newtheorem{corollary}[theorem]{Corollary}
\theoremstyle{definition}

\newtheorem{remark}[theorem]{Remark}

\newtheorem{question}[theorem]{Question}

\title{Local Variational Properties of Non-degenerate Free Boundary Minimal Hypersurfaces}

\author{Xiaoxiang Jiao \and Qinhan Zhao \and Hangyue Zhu}
\date{\today}

\begin{document}

\maketitle

\begin{abstract}
We establish local variational characterizations of non-degenerate free-boundary minimal hypersurfaces in compact Riemannian manifolds with boundary.

For a two-sided strictly stable hypersurface, we show that it is the unique mass minimizer in its relative homology class, both in a small tubular and flat-neighborhood. As an application, for generic metrics in dimensions \(3\le n+1\le 7\), we show that the first free-boundary width is realized by a multiplicity-one hypersurface of Morse index one.

For an orientable non-degenerate free-boundary hypersurface of Morse index \(k>0\), we construct a canonical local \(k\)-parameter family and obtain a local min--max characterization.
\end{abstract}
\noindent\textbf{Keywords.} Free-boundary minimal hypersurfaces; Morse index; min–max theory.

\noindent\textbf{2020 Mathematics Subject Classification.} 49Q20, 53C42, 49J35.
\tableofcontents

\section{Introduction}

\subsection{Background and Main Result}
A basic principle in finite-dimensional variational theory is that a strictly stable critical point is a strict local minimizer. More precisely, if \(f\) is a smooth function on a finite dimensional Banach space and \(x_0\) is a critical point such that the second variation of \(f\) at \(x_0\) is positive definite, then, from Taylor expansion we have:
\[
f(x)>f(x_0)\qquad \text{for every \(x\neq x_0\) sufficiently close to \(x_0\).}
\] 

However, the analogous question for geometric variational problems is subtler. Suppose that \(\Sigma\) is a strictly stable critical point of the area functional, it is natural to ask the following question:
\begin{question}
\label{q1}
Does strict stability of a critical submanifold imply that it is a strict local minimizer among all admissible currents in a sufficiently small neighborhood?
\end{question}
The difficulty in Question~\ref{q1} lies in the gap between stability and local minimality. Strict stability is detected by the second variation and is therefore a statement about regular nearby deformations. However, the natural competitors in geometric measure theory may carry singularities or multiplicity. A fundamental result of B. White gives a positive answer in closed and fixed boundary settings {\cite[Theorem 2]{White1994}}. He proved that a strictly stable submanifold with possibly empty boundary is locally unique homologous minimizer for elliptic functionals.

However, White's theorem does not directly address the free-boundary situation, where the boundary of the competitor is allowed to move along the ambient boundary. This leads to the following free-boundary analogue of Question \ref{q1}:

\begin{question}
\label{q2}
In the free-boundary setting, does strict stability characterize a minimal submanifold as the unique local minimizer in its relative homology class among all nearby relative cycles?
\end{question}

We give an affirmative answer to Question~\ref{q2} for hypersurfaces, based on a mechanism different from White's, using varifold regularity theory. That leads to our first main result below. For brevity, we omit some notations. See Theorem \ref{main1precise} for the full statement.

\begin{theorem}[Theorem \ref{main1precise}]\label{main}
Let \(\Sigma\) be a compact, properly embedded, two-sided, strictly stable free-boundary minimal hypersurface. Then, in a sufficiently small neighborhood of \(\Sigma\), it is the unique mass minimizer in its relative homology class among relative cycles.
\end{theorem}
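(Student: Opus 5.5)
We argue by contradiction and compactness, in the spirit of the varifold-regularity approach announced above rather than White's calibration argument. Suppose there are neighborhoods \(U_j\) of \(\Sigma\) shrinking to \(\Sigma\) (tubular neighborhoods of radius \(1/j\), or flat-neighborhoods of radius \(1/j\)) and relative cycles \(T_j\) supported in \(\overline{U_j}\), homologous to \(\Sigma\) relative to \(\partial M\) within \(U_j\), with \(\mathbf M(T_j)\le \mathbf M(\Sigma)\) and \(T_j\ne\Sigma\). First we reduce to minimizers. For each \(j\) we minimize mass in the relative homology class of \(\Sigma\) among relative cycles supported in \(\overline{U_j}\). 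Existence follows from compactness for integral currents modulo \(\partial M\). This produces \(S_j\) with \(\mathbf M(S_j)\le\mathbf M(T_j)\le \mathbf M(\Sigma)\), and it suffices to show \(S_j=\Sigma\) for large \(j\), and also that \(\Sigma\) is the only minimizer. Since the tubular neighborhood is foliated near \(\Sigma\) by the level sets of the distance function, and the free boundary condition makes these leaves meet \(\partial M\) almost orthogonally, a maximum-principle/barrier argument will show that \(S_j\) is locally minimizing in the interior of \(U_j\) away from the obstacle.

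Next we use regularity and convergence. The homology constraint together with \(\mathbf M(S_j)\le\mathbf M(\Sigma)\) forces \(S_j\to\Sigma\) as currents with multiplicity one and \(\mathbf M(S_j)\to\mathbf M(\Sigma)\). Hence the associated varifolds converge to \(\Sigma\) with multiplicity one. Allard's regularity theorem in the interior, and its free-boundary version (Grüter--Jost) near \(\partial M\), then give that, away from the obstacle, \(S_j\) is a smooth graph \(u_j\) over \(\Sigma\) with \(u_j\to0\) in \(C^{1,\alpha}\). The key point is to handle contact with the obstacle \(\partial U_j\). We will choose \(U_j\) so that its boundary leaves \(\{\mathrm{dist}=\pm\delta\}\), suitably perturbed using the first eigenfunction \(\phi_1>0\) of the Jacobi operator with Robin boundary condition, have mean curvature pointing away from \(\Sigma\). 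This is possible exactly because of strict stability: the graphs of \(\pm t\phi_1\) are strictly mean-convex toward \(\Sigma\) and meet \(\partial M\) at angle close to \(\pi/2\). The maximum principle, including its boundary version at \(\partial M\), then keeps \(S_j\) off the obstacle. So \(S_j\) is a free-boundary stationary graph of a function \(u_j\), small in \(C^{2,\alpha}\) by Schauder estimates.

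Finally we conclude by uniqueness. Non-degeneracy, which is implied by strict stability, together with the implicit function theorem applied to the free-boundary minimal surface equation shows that the only small free-boundary minimal graph is \(u=0\). Therefore \(S_j=\Sigma\), so \(\mathbf M(T_j)\ge\mathbf M(\Sigma)\). Equality \(\mathbf M(T_j)=\mathbf M(\Sigma)\) would make \(T_j\) itself a minimizer, and the same argument would give \(T_j=\Sigma\), a contradiction. For the flat-neighborhood version, a cycle flat-close to \(\Sigma\) with mass at most \(\mathbf M(\Sigma)\) converges as varifolds, but it need not lie in a small tubular neighborhood. We handle this by first using the tubular result together with a monotonicity/clearing-out argument to push the support into the tube.

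The main obstacle is the barrier step near \(\partial M\): constructing mean-convex foliating leaves whose contact angle with \(\partial M\) prevents the minimizer from touching them along the free boundary. This requires a strong maximum principle for free-boundary varifolds at boundary points. I would first try a Hopf-lemma comparison between the graph \(u_j\) and \(t\phi_1\) using the Robin condition.
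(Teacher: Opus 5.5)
There is a genuine gap exactly at the step you flag, and in the order you give it the argument is circular. Your neighborhoods \(U_j\) have width \(O(1/j)\), so every point of \(S_j\) lies within \(O(1/j)\) of the obstacle \(\partial_{\mathrm{rel}}U_j\). Allard's theorem and its Gr\"uter--Jost version need a mean curvature bound on a ball of radius \(R\) independent of \(j\). This is because the density-ratio hypothesis comes from multiplicity-one varifold convergence, which only controls fixed scales. Every such ball meets the possible contact set, where the first variation of an obstacle minimizer may have a singular part. So ``regularity away from the obstacle'' gives nothing at usable scales, and the graph \(u_j\) does not yet exist. The Hopf-lemma comparison of \(u_j\) with \(t\phi_1\) that you propose for excluding contact therefore presupposes its own conclusion.

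To make your route work, you must exclude contact at the varifold level before any regularity. In the interior, use White's strong maximum principle for varifolds that minimize area to first order in a strictly mean-convex region. At the corner \(\partial_{\mathrm{rel}}U_j\cap\partial N\), you need a free-boundary analogue (e.g.\ Li--Zhou), which requires leaves meeting \(\partial N\) at a controlled angle. Graphs of \(\pm t\phi_1\) are orthogonal to \(\partial N\) only to first order, which is all the Robin condition gives. They should be corrected, e.g.\ by an implicit-function construction as in \cite{WZC}. Mind the sign as well: the leaves must have mean curvature vector pointing toward \(\Sigma\), into the slab, not away from it; on the graph of \(t\phi_1\) one has \(\vec H\approx -t\lambda_1\phi_1\nu\). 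Finally, level sets of the distance to \(\Sigma\) do not respect \(\partial N\), since normal geodesics from \(\partial\Sigma\) generally leave \(\partial N\). This is why the paper builds \(\tau\) with \(\nabla\tau\in T\partial N\), so that the projection and retraction are maps of pairs.

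The paper avoids barriers and maximum principles entirely. It composes an arbitrary free-boundary flow \(\psi_t\) with the Lipschitz retraction \(P_r\) onto \(K_r\), and \((P_r\circ\psi_t)_\#T_r\) stays in the same relative class (Lemma~\ref{competitor}). Since \(J_n(DP_r)\le 1+C\,d_\tau(\cdot,K_r)\), this gives \(|\delta V_r(X)|\le C\int|X|\,d\|V_r\|\) uniformly in \(r\) (Proposition~\ref{1variation}), whether or not \(T_r\) touches \(\partial K_r\). That is precisely the fixed-scale input Allard and Gr\"uter--Jost need. Because stationarity is never proved, the paper cannot finish with the implicit function theorem. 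Instead it uses the coercivity \(\mathcal A(u)-\mathcal A(0)\ge c_1\|u\|_{H^1}^2\) for \(C^1\)-small graphs, played against \(\mathbf M(T_r)\le\mathbf M(\Sigma)\). So you have two options. Either supply the varifold maximum principles, in which case your stationarity, Schauder and IFT ending is fine. Or adopt the retraction bound and replace the IFT step by this second-variation comparison.
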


The preceding results describe the local variational picture in the index 0 case. We next turn to the case of positive Morse index. In finite-dimensional Morse theory, a critical point of index \(k\) is characterized locally as a \(k\)-dimensional saddle point. This suggests the following question:

\begin{question}
\label{q3}
Does a non-degenerate free-boundary minimal hypersurface of Morse index \(k\) satisfy a local \(k\)-parameter min-max characterization?
\end{question}

Strong local minimax properties for non--degenerate critical submanifolds in the boundaryless and fixed boundary setting were previously established by White \cite[Theorem 4]{White1994}. Our second main result gives an analogous result in free--boundary setting, affirmatively answers Question \ref{q3} for orientable hypersurfaces, based on a modification of our first main result. For brevity we don't give a full statement here, the precise version is given in Theorem \ref{main22}.
\begin{theorem}[Theorem \ref{main22}]\label{main2baby}
Let \(\Sigma\) be a compact, orientable, two-sided, properly embedded non--degenerate free-boundary minimal hypersurface of Morse index \(k>0\). Then there exists a local \(k\)-parameter family of relative cycles centered at \(\Sigma\), whose parameter boundary has mass strictly below \(\mathbf M(\Sigma)\), such that every filling of this boundary must reach the mass level \(\mathbf M(\Sigma)\).
\end{theorem}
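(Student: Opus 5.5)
Our plan follows White's finite-dimensional reduction, with the strictly stable result (Theorem~\ref{main}, in the precise form of Theorem~\ref{main1precise}) used as a black box on a constrained problem. Let $\nu$ be a unit normal, $L$ the Jacobi operator with Robin boundary condition $\partial_\eta u = h^{\partial M}(\nu,\nu)u$, and $\phi_1,\dots,\phi_k$ an $L^2$-orthonormal basis of its negative eigenfunctions, with eigenvalues $\lambda_1\le\dots\le\lambda_k<0$. Since $\Sigma$ is non-degenerate, $L$ is invertible and is uniformly positive on $E^\perp$, where $E=\mathrm{span}\{\phi_i\}$. We define the canonical family by
\[
\Sigma_v=\{\exp_x(w_v(x)\nu(x))\},\qquad w_v=\textstyle\sum_i v_i\phi_i+ u_v,\quad v\in \overline B^k_\delta,
\]
with the boundary points moved along $\partial M$ through a fixed extension of $\nu$ tangent to $\partial M$. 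The correction $u_v\in E^\perp$ is chosen by the implicit function theorem so that $\Sigma_v$ is stationary for area among graphs whose projection to $E$ is fixed. In other words, $H(\Sigma_v)$ and the free-boundary angle defect are $L^2$-orthogonal to $E^\perp$ in the appropriate sense. Then $f(v)=\mathbf M(\Sigma_v)$ is smooth, has a nondegenerate maximum at $0$ with Hessian $\mathrm{diag}(\lambda_i)$, and satisfies $f(v)<\mathbf M(\Sigma)-c|v|^2$. In particular the parameter boundary $\{\Sigma_v\}_{|v|=\delta}$ has mass strictly below $\mathbf M(\Sigma)$. All $\Sigma_v$ are relative cycles homologous to $\Sigma$ in a tubular neighborhood $U$.

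For the min--max property, we take any continuous (in the flat topology) family $\{T_v\}_{v\in\overline B^k}$ of relative cycles in the small neighborhood that agrees with $\Sigma_v$ on $\partial B^k$. The goal is to find $v$ with $\mathbf M(T_v)\ge\mathbf M(\Sigma)$. We use a projection map $P:T\mapsto P(T)\in\mathbb R^k$, defined as follows: after fixing an orientation, let $\tilde\phi_i$ be an extension of $\phi_i$ to $U$ that is constant along normal geodesics, and set
\[
P(T)_i=\int_{\Omega_T}\tilde\phi_i\,d\mathcal H^{n+1},
\]
where $\Omega_T$ is the region with relative boundary $T-\Sigma$ in $U$. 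This needs orientability and the relative homology condition. $P$ is flat-continuous, and on $\Sigma_v$ it equals $v+O(|v|^2)$. Hence $v\mapsto P(T_v)$ restricted to $\partial B^k$ has degree one around $0$, so some $v_0$ has $P(T_{v_0})=P(\Sigma_0)=0$.

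It remains to show the constrained minimization statement: \emph{among relative cycles $T$ in the small neighborhood, homologous to $\Sigma$, with $P(T)=P(\Sigma_w)$ for some $w$, the graph $\Sigma_w$ minimizes mass; in particular $\mathbf M(T)\ge \mathbf M(\Sigma)$ when $P(T)=0$.} We prove this by rerunning the argument of Theorem~\ref{main1precise} with the constraint. Take a minimizer $T$ of mass under the constraint $P(T)=0$ in the neighborhood class. It is stationary for area up to a Lagrange multiplier term $\sum\mu_i\tilde\phi_i$, and so has bounded first variation. Free-boundary Allard-type and Schoen--Simon/Wickramasekera regularity, which the proof of Theorem~\ref{main1precise} already relies on, then show that $T$ is a smooth graph $w$ over $\Sigma$ with small $C^{2,\alpha}$ norm. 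It solves the constrained equation $H=\sum\mu_i\phi_i+\text{small}$ with the free-boundary condition, so uniqueness in the implicit function theorem gives $T=\Sigma_0=\Sigma$. Thus $\mathbf M(T_{v_0})\ge\mathbf M(\Sigma)$.

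The main obstacle is this regularity step. Near $\partial M$ the minimizer must first be shown to converge, as the neighborhood shrinks, to $\Sigma$ with multiplicity one. Then free-boundary regularity must be applied to a varifold with prescribed (Lagrange-multiplier) bounded mean curvature rather than a minimal one. To get multiplicity one and the graph property, we would first try a compactness argument: a sequence of constrained minimizers in shrinking neighborhoods converges to $\Sigma$ as a varifold, and mass comparison against $\Sigma$ rules out higher multiplicity.
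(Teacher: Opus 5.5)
Your architecture matches the paper's: an unstable $k$-disk of graphs with $\mathbf M\le\mathbf M(\Sigma)-c|v|^2$, an $\mathcal F_{\mathrm{rel}}$-continuous map to $\mathbb R^k$ of degree one on the parameter boundary, and minimality on the zero set of that map, proved by rerunning the strictly stable argument in shrinking tubes $K_r$. (The paper's $s_i(T)=T(\eta_i)$, with $\eta_i=\tau\pi^*(\zeta\varphi_i\omega_\Sigma)$, is a Stokes-form version of your $P$. It is normalized so that $s(\gamma_a)=a$ exactly on the plain linear graphs $u_a=\zeta^{-1}\sum_i a_i\varphi_i$, so no implicit-function correction of the family is needed.)

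The gap is in running the last step with a \emph{hard} constraint $P(T)=0$. First, a minimizer in $K_r$ is not stationary even for the constrained problem. The paper's uniform-in-$r$ bound $|\delta V(X)|\le C\int|X|\,d\|V\|$ comes from comparing $T$ with $(P_r\circ\psi_t)_\#T$, where $P_r$ is the Lipschitz retraction, and these competitors violate $P=0$. To get a Lagrange multiplier you would need deformations of $K_r$ into itself that restore the constraint, act nondegenerately on $P$, and cost mass comparable to the change in $P$, all uniformly in $r$. This is not automatic: such deformations cannot push parts of $T$ lying on $\Sigma_{\pm r}$ further outward, so the directions you need may be blocked. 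Without this, ``stationary up to $\sum\mu_i\tilde\phi_i$, hence bounded first variation'' is unjustified exactly where uniformity is needed to apply Allard and Gr\"uter--Jost. (These two, not Schoen--Simon or Wickramasekera, are what the strictly stable proof uses.)

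Second, even once $T$ is known to be a $C^{1,\alpha}$ graph, it may touch $\Sigma_{\pm r}$, where only a one-sided variational inequality holds. So $T$ need not solve $H=\sum\mu_i\tilde\phi_i$ with the free-boundary condition, and your implicit-function uniqueness step has no equation to act on. Even with the equation, the uniqueness you need is for the system $H=\sum\mu_i\tilde\phi_i$, $P=0$, not for the one defining $\Sigma_v$, whose constraint is $\langle w,\phi_i\rangle=v_i$.

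The paper avoids both problems by replacing the constraint with a penalty, $\mathbf M^*(T)=\mathbf M(T)+\Lambda|s(T)|^2$ with $\Lambda>\max_i(-\lambda_i/2)$. A minimizer $\widehat T_r$ of $\mathbf M^*$ in $K_r$ is subject only to the tube constraint. The homotopy formula gives $|s(f_{t\#}\widehat T_r)-s(\widehat T_r)|\le C|t|\int|X|\,d\|\widehat T_r\|$, and minimality gives $|s(\widehat T_r)|\le(\mathbf M(\Sigma)/\Lambda)^{1/2}$. So the penalty only adds $C|t|\int|X|$ to the retracted-variation estimate, and the first variation bound stays uniform in $r$.

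Since $s(\llbracket\Gamma_u\rrbracket)_i=\int_\Sigma\zeta u\varphi_i$ exactly, the penalty makes the second variation coercive. Hence $\Sigma$ is a strict $\mathbf M^*$-minimizer among $C^1$-small graphs by Taylor expansion alone. No Euler--Lagrange equation or $C^{2,\alpha}$ theory is needed, and contact with $\Sigma_{\pm r}$ is irrelevant. Taking $s(T)=0$ then gives $\mathbf M(T)\ge\mathbf M(\Sigma)$.

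Your graphical endgame can be repaired the same way. For $C^1$-small $w$ with $P(\Gamma_w)=0$ one has $|\langle w,\phi_i\rangle|\le C\|w\|_{L^2}^2$, hence $\mathcal A(w)-\mathcal A(0)\ge c\|w\|_{H^1}^2$. The first-variation step, however, genuinely needs the penalty or a separate constraint-restoring lemma.
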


We also strengthen Theorems \ref{main}, \ref{main2baby} from tubular to $\mathcal F_{rel}$-neighborhoods, see for Corollary \ref{cor: two enhance} and Theorem \ref{main2flat}. These strengthenings fit the natural workspace of Almgren--Pitts min--max theory. As an application, we obtain the existence of index 1 free--boundary minimal hypersurface:
\begin{theorem}[Theorem \ref{thm:index-one-omega-one}]
For compact Riemannian manifold $(N^{n+1},\partial N,g)$, where $g$ is chosen generically and $3\le n+1\le 7$, there exists a smooth, properly embedded, two-sided, multiplicity-one free-boundary minimal hypersurface \(\Gamma\) such that
\[
\mathbf M(\Gamma)=\omega_1(N,g)
\qquad\text{and}\qquad
\operatorname{Index}(\Gamma)=1.
\]

\end{theorem}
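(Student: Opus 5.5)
The plan is to combine the Almgren--Pitts min--max theory in the free-boundary setting with generic bumpy and Morse-index results, and then to use the flat-neighborhood strengthening of Theorem~\ref{main} (Corollary~\ref{cor: two enhance}) to exclude stable components.

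\textbf{Step 1 (regularity and finiteness).} For a generic metric $g$ (bumpy in the free-boundary sense, by the free-boundary bumpy metric theorem of Ambrozio--Carlotto--Sharp), every properly embedded free-boundary minimal hypersurface is non-degenerate, and only finitely many have area below any fixed bound. The free-boundary Almgren--Pitts theory of Li--Zhou gives $\omega_1(N,g)=\sum_j m_j\mathbf M(\Gamma_j)$, where the $\Gamma_j$ are smooth, properly embedded free-boundary minimal hypersurfaces and the $m_j$ are integer multiplicities. The index upper bound of Guang--Li--Wang--Zhou in the free-boundary setting gives $\sum_j \operatorname{Index}(\Gamma_j)\le 1$. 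For generic metrics, the multiplicity-one theorem (Zhou in the closed case; its free-boundary analogue by Sun--Wang--Zhou) makes every $\Gamma_j$ two-sided with $m_j=1$.

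\textbf{Step 2 (exclusion of the all-stable case).} Suppose every component $\Gamma_j$ had index $0$. By non-degeneracy, each would be strictly stable. The support of the min--max varifold is $\Gamma=\bigcup_j\Gamma_j$, a strictly stable two-sided free-boundary minimal hypersurface. By Corollary~\ref{cor: two enhance}, $\Gamma$ is then the unique strict mass minimizer in an $\mathcal F_{rel}$-neighborhood of its relative cycle. The intended use of this is as follows:
\begin{itemize}
\item Take a pulled-tight critical sequence of sweepouts.
\item Since the neighborhood is one where $\Gamma$ is a strict local minimizer, these sweepouts can be deformed locally near $\Gamma$, using the discrete-to-continuous interpolation machinery, to strictly decrease the maximal mass.
\end{itemize}
Concretely, the strict minimizing property together with the min--max critical set shows that any min--max sequence converging to $\Gamma$ can be pushed down, as in Marques--Neves' argument that an optimal sweepout cannot concentrate on a strictly stable minimizer. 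This contradicts the definition of $\omega_1$.

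A simpler route that I would try first is this. The one-parameter sweepout $\Phi$ passes from $0$ to $[N]$, so it must cross relative cycles near $\Gamma$ on both sides. Since $\Gamma$ locally minimizes mass in its class, nearby slices have mass $\ge\mathbf M(\Gamma)$. One then builds two disjoint competitor sweepouts: one in the region bounded by $\Gamma$ on one side, using a local mean-convex foliation near the stable $\Gamma$, and a similar one on the other side. Each would have width strictly less than $\mathbf M(\Gamma)$, and concatenating them contradicts the optimality of $\omega_1$. Hence some component has index $1$.

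\textbf{Step 3 (conclusion).} Since the total index is at most $1$ and the all-stable case is excluded, exactly one component $\Gamma_1$ has index $1$ and all others are strictly stable. The stable components would then have to be removed using the same local-minimizer argument, so that $\Gamma$ itself, with multiplicity one, realizes $\omega_1$ with $\operatorname{Index}(\Gamma)=1$. Alternatively one can allow $\Gamma$ to be disconnected and count the index of the union; the statement only needs $\Gamma$ to be a possibly disconnected hypersurface.

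\textbf{Main obstacle.} I expect Step 2 to be the main difficulty: turning the local strict minimality in an $\mathcal F_{rel}$-neighborhood into a deformation that lowers the width. This requires compatibility between the $\mathcal F_{rel}$ topology on relative cycles and the $\mathbf F$-metric used in the pull-tight and interpolation steps. I would handle it by using the flat-neighborhood version precisely, so that any sweepout slice close to $\Gamma$ in $\mathbf F$ lies in the neighborhood where the strict inequality holds.
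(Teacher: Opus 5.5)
Your Step~1 uses the same inputs as the paper: the strong bumpy metric and multiplicity-one theorems of Sun--Wang--Zhou, and the index bound of Guang--Li--Wang--Zhou. Excluding the all-stable case via Corollary~\ref{cor: two enhance} is also the right strategy. But Step~2 has a genuine gap.

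Corollary~\ref{cor: two enhance} controls relative cycles in an $\mathcal F_{rel}$-neighborhood of $\llbracket\Gamma\rrbracket$. The min--max critical set only provides slices with $|\Phi_j(x_j)|\to|\Gamma|$ as varifolds. Multiplicity-one varifold convergence does not give $\mathcal F_{rel}$-convergence to $\llbracket\Gamma\rrbracket$, because cancellation can occur: boundaries of many thin pancakes covering half of $\Gamma$ converge to $|\Gamma|$ as varifolds but to $0$ in $\mathcal F_{rel}$. Your last paragraph simply presupposes a slice that is $\mathbf F$-close to $\llbracket\Gamma\rrbracket$.

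Producing such a slice is the actual work, done in Lemma~\ref{lemmaSWZ}. The paper retraces the Sun--Wang--Zhou proof:
\begin{enumerate}
\item Caccioppoli liftings of the sweepouts.
\item The perturbed functionals $\mathcal A^{\varepsilon_j h}$, whose min--max boundaries $\partial\Omega_j$ converge to $\Gamma$ with multiplicity one, graphically away from finitely many points.
\item A diagonal choice of slices $\partial E_j$ with $\operatorname{Vol}(E_j\triangle\Omega_j)\to0$.
\item A cutoff gluing back into an honest minimizing sequence of $1$-sweepouts.
\end{enumerate}
The result is $\Phi_j$ and $x_j$ with $\mathcal F_{rel}(\Phi_j(x_j)-\llbracket\Gamma\rrbracket)\to0$.

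The contradiction mechanism you describe also does not work. Near a strict local minimizer every nearby cycle has larger mass, so there is nothing to ``push down.'' The paper uses an energy-barrier argument instead:
\begin{enumerate}
\item By compactness and the uniqueness in Corollary~\ref{cor: two enhance}, choose $\mathcal U'\Subset\mathcal U$ and $\delta>0$ with $\mathbf M\ge L+2\delta$ on $\mathcal U\setminus\mathcal U'$.
\item Shrink $\mathcal U$, using the isoperimetric lemma, so that all its cycles are homologous to $\Gamma$.
\item For large $j$ one has $\sup\mathbf M(\Phi_j)<L+\delta$. Since $X$ is connected, $\Phi_j(X)\cap\mathcal U'\neq\varnothing$ would force $\Phi_j(X)\subset\mathcal U$.
\item The Almgren class $\lambda$ vanishes on $\mathcal U$, so this would contradict $\Phi_j$ being a $1$-sweepout.
\item Hence $\Phi_j(X)\cap\mathcal U'=\varnothing$, contradicting $\Phi_j(x_j)\in\mathcal U'$ from Lemma~\ref{lemmaSWZ}.
\end{enumerate}

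Your ``simpler route'' fails on several counts:
\begin{enumerate}
\item A sweepout need not pass $\mathcal F_{rel}$-near $\Gamma$ at all.
\item The foliation near a strictly stable $\Gamma$ has mean curvature pointing toward $\Gamma$. It therefore yields no sweepouts of the two sides with mass below $\mathbf M(\Gamma)$.
\item A concatenation through $\partial\Omega_+=\Gamma$ has maximal mass at least $\mathbf M(\Gamma)=\omega_1$, which contradicts nothing.
\end{enumerate}

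Finally, in Step~3 no stable components need removing. The index of a disconnected $\Gamma$ is the sum over its components, and discarding components would drop the mass below $\omega_1$.
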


\subsection{Related Works and Motivation from Min--Max Theory}

The work most directly related to the present paper is White's fundamental work \cite{White1994}. In the strictly stable case, White proved that a smooth compact embedded critical submanifold is the unique local minimizer \cite[Theorem~2]{White1994}. In the presence of boundary, this is a fixed-boundary statement. In the same article, White also showed that a non--degenerate critical submanifold of Morse index \(k\) admits a local \(k\)-parameter min--max property \cite[Theorem~4]{White1994}. This result directly leads to the core ingredient in the study of the Morse index of min--max hypersurfaces \cite[Section~6]{MN2021}.

Several refinements of the index-0 statement are known. Inauen--Marchese proved a quantitative $\mathcal F$-neighborhood version of White's theorem \cite{IM2018}, while Morgan--Ros established similar result for strictly stable constant-mean-curvature hypersurfaces \cite{MR2010}.

The present paper develops parallel results for free-boundary minimal hypersurfaces. The essential distinction from the fixed-boundary case is that the boundary of a competitor is allowed to move along \(\partial N\), the natural competitors are therefore relative-cycles. For case of Morse index 0, we prove that a strictly stable hypersurface is the unique local mass minimizer in its relative homology class. For Morse index \(k\), we construct a \(k\)-parameter family and prove the corresponding local min--max characterization. Both results are first in a tubular neighborhood and then in a $\mathcal F_{rel}$--neighborhood.

The min--max motivation comes from the free-boundary min--max theory. The boundaryless theories was developed by Almgren and Pitts, with regularity due to Schoen--Simon, and has led to strong results concerning widths, multiplicity, and Morse index \cite{Almgren1965,Pitts1981,MN2016,Zhou2020,MN2021}. For fixed-boundary constructions, see De Lellis--Ramic \cite{DLR18}. In the free-boundary setting, the corresponding results were also well developed in \cite{FBMHI,FBMHII,FBMH1,WZC}. Free-boundary regularity and compactness results relevant here include \cite{Allard1972,GruterJost1986,ACS2018,GZ2018}. Since free-boundary sweepouts naturally fits the $\mathcal F_{rel}$ topology, the local theorems proved here provide tools for identifying the Morse index of multiplicity-one min--max realizations. As a first application, we obtain an index-one realization of the first free-boundary width.

\subsection{Idea of Proofs}

The proofs of Theorems \ref{main} and \ref{main2baby} share a common analytic core. We work directly in the relative-cycle setting and construct a free-boundary adapted tubular parametrization around \(\Sigma\). After directly choosing a minimizer in the neighborhood and showing that the minimizers converge back to $\Sigma$, both as currents and varifolds, followed by the Allard-type regularity theorems, every relevant minimizer are forced to be a free-boundary graph over \(\Sigma\).

In the strictly stable case, once a minimizer has been reduced to a  graph \(\Gamma_u\), Taylor expansion of the area functional forces \(\llbracket\Sigma\rrbracket\) to be the unique homologous minimizer, this proves the tubular-neighborhood local minimality theorem.

For a non--degenerate $\Sigma$ of Morse index \(k\), we use the spectral splitting \( H^1(\Sigma)=E_-^k\oplus E_+,\) where the second variation is negative on \(E_-^k\) and positive on \(E_+\). The negative eigenspace produces a canonical \(k\)-parameter family \(\{\gamma_a\}_{a\in\overline B^k}\) for which \(\Sigma=\gamma_0\) is the unique maximal slice. Given any competing \(k\)-parameter family with the same boundary values, we associate to each nearby graphical slice its unstable coordinates in \(E_-\). A degree argument yields a parameter at which all unstable coordinates vanish, thus the remaining displacement lies in the positive directions. Combining the graphical reduction and coercivity would give that all $k$--parameter families have maximal mass at least $\mathbf M(\Sigma)$. This gives the local \(k\)-parameter min--max characterization.

Finally, above two main results are upgraded to a $\mathcal F_{rel}$-neighborhood statement by replacement argument and free-boundary monotonicity formula. The corresponding Corollary \ref{cor: two enhance} acts as an avoidance principle in the free-boundary min--max cycle space. Combined with multiplicity one theorem, generic non--degeneracy, and the Morse-index upper bound, it yields an index-one realization of the first free-boundary width in Theorem \ref{thm:index-one-omega-one}.

\subsection{Organization of the Paper}

In Section~\ref{2}, we collect the basic notation and preliminary facts concerning relative \(\mathbb Z_2\)-cycles, varifolds, free-boundary variations, and strict stability.

In Section~\ref{3.1} we construct a free-boundary adapted tubular neighborhood of \(\Sigma\).

In Section~\ref{3}, we prove the main theorem. More precisely, in Section~\ref{3.2} we prove current and varifold convergence of the relative mass minimizers to \(\llbracket\Sigma\rrbracket\) and \(|\Sigma|\), respectively. In Section~\ref{3.3} we establish a uniform first variation bound. In Section~\ref{3.4} we apply regularity theorems to obtain graphical convergence. Finally, in Section~\ref{3.5} we use strict stability to prove the local minimizing property and complete the proof of the main theorem.

In Section \ref{proof of second result}, we use degree theory and a modified version of Theorem \ref{main} to characterize the local min--max property for a non-degenerate free--boundary minimal hypersurface of index $k.$

In Section~\ref{flatversion}, we prove a $\mathcal F_{rel}$-neighborhood modified local minimizing theorem by an exterior-replacement argument. We then apply this version to the first free-boundary width and prove an index-one conclusion for a multiplicity-one realization.

\subsection*{Acknowledgements}

The last author would like to thank Wenduo Zou for helpful discussions related to this work.

\section{Preliminaries}\label{2}

\subsection{Relative Currents and Relative Cycles}

We first recall some notation for relative currents and relative cycles. These are standard in geometric measure theory and in the free-boundary min--max literature with the notation slightly modified for brevity. See for example, Federer \cite{Federer1969}, Simon \cite{Simon1983} and Guang--Li--Wang--Zhou \cite[Section 3.1]{FBMHII}.

Throughout this subsection, the coefficient group \( \mathbb G \) should be taken in $\{\mathbb Z,\mathbb Z_2\}$, and all definitions below apply to either coefficient group. The index-0 results in Sections~\ref{3} and~\ref{flatversion} are formulated with coefficients in \(\mathbb Z_2\), whereas the index $k$ results are formulated with coefficients in \(\mathbb Z\). Orientations and signs must be retained under coefficient \(\mathbb Z\) and are irrelevant under coefficient \(\mathbb Z_2\).

Let \((K,A)\) be a compact pair. We denote by \(\mathcal R_k(K;\mathbb G)\) the space of \(k\)-dimensional rectifiable currents with coefficient $\mathbb G$ supported in \(K\). Define \(Z_k(K,A;\mathbb G):=\left\{T\in \mathcal R_k(K;\mathbb G):\operatorname{spt}(\partial T)\subset A\right\}.\) Thus an element of \(Z_k(K,A;\mathbb G)\) is a current whose boundary is allowed to lie in \(A\).

We say that two elements \(T,S\in Z_k(K,A;\mathbb G)\) are equivalent if \(T-S\in \mathcal R_k(A;\mathbb G),\) that is, if they differ by a current supported in \(A\). We denote the quotient space by
\[
\mathcal Z_k(K,A;\mathbb G):=
Z_k(K,A;\mathbb G)/\mathcal R_k(A;\mathbb G).
\]
Elements of \(\mathcal Z_k(K,A;\mathbb G)\) are called relative \(k\)-cycles in the pair \((K,A)\).

For \(\tau\in \mathcal Z_k(K,A;\mathbb G),\) we denote by \([\tau]\in H_k(K,A;\mathbb G)\) its relative homology class. Equivalently, if \(T,S\in Z_k(K,A;\mathbb G)\) are representatives, then \([T]=[S]\in H_k(K,A;\mathbb G)\) if and only if there exist \(Q\in \mathcal R_{k+1}(K;\mathbb G),\,R\in \mathcal R_k(A;\mathbb G),\) such that \(T-S=\partial Q+R.\)

For every \(\tau\in\mathcal Z_k(K,A;\mathbb G),\) there is a canonical representative \(T\in\tau\) satisfying \(T\llcorner A=0.\)  We shall often identify a relative cycle with its canonical representative. With this convention, if \(\Sigma^k\subset K\) is a properly embedded smooth submanifold with \(\partial\Sigma\subset A,\) then \(\llbracket\Sigma\rrbracket\in \mathcal Z_k(K,A;\mathbb G)\) denotes the relative cycle induced by \(\Sigma\). Notice that when \(\mathbb G=\mathbb Z\), we assume that \(\Sigma\) is orientable and fix an orientation for \(\Sigma\) and $\llbracket\Sigma\rrbracket$.

The mass of a relative cycle \(\tau\in\mathcal Z_k(K,A;\mathbb G)\) is defined by
\[
\mathbf M(\tau)
:=
\inf\{\mathbf M(T):T\in\tau\}.
\]
Equivalently, if \(T\) is the canonical representative of \(\tau\), then \( \mathbf M(\tau)=\mathbf M(T). \)

We shall also use the relative flat norm. For \( S\in \mathcal R_k(K;\mathbb G), \) define
\[
\mathcal F_{\mathrm{rel}}(S):=
\inf_{\substack{P\in \mathcal R_k(K;\mathbb G) \\
Q\in \mathcal R_{k+1}(K;\mathbb G) \\
R\in \mathcal R_k(A;\mathbb G)}}
\left\{
\mathbf M(P)+\mathbf M(Q):
S=P+\partial Q+R
\right\},
\]
Equivalently, for two relative cycles represented by \(T_1,T_2\), one may write
\[
\mathcal F_{\mathrm{rel}}(T_1-T_2)
=
\inf_{R\in\mathcal R_k(A;\mathbb G)}
\mathcal F(T_1-T_2+R).
\]

\subsection{Varifolds}

We next recall the notation of varifolds. Let \(N^{n+1}\) be a smooth Riemannian manifold. We denote by \(\mathcal G_n(N) \) the Grassmannian bundle of unoriented \(n\)-planes in \(TN\). An \(n\)-varifold \(V\in\mathcal V_n(N)\) is a Radon measure on \(\mathcal G_n(N)\). Its weight measure is denoted by \( \|V\|. \) Thus, for a Borel set \(B\subset N\),
\[
\|V\|(B):=V(\{(x,S)\in\mathcal  G_n(N):x\in B\}).
\]

If \(T\) is an \(n\)-dimensional rectifiable current, we denote by \( |T| \) the associated rectifiable varifold. Its weight measure agrees with the mass measure of \(T\), that is \(\| |T| \|(N)=\mathbf M(T). \) If \(\Sigma^n\subset N\) is a smooth embedded hypersurface, we write \( |\Sigma| \) for the varifold associated with \(\Sigma\), namely
\[
|\Sigma|(\varphi)=
\int_\Sigma \varphi(x,T_x\Sigma)\,d\mathcal H^n(x)
\]
for every \(\varphi\in C_c(\mathcal G_n(N))\).

We also introduce the varifold distance. Given $V,\,W\in\mathcal V_k(U)$, the varifold distance $\mathbf F_V$ between $V$ and $W$ is defined as \[ \mathbf F_V(V,W):=\sup_{\substack{ f\text{ Lipschitz}\\ \|f\|_\infty\le1\\ \operatorname{Lip}(f)\le1 }}\left\{\int_{\mathcal G_k(U)}f(x,S)dV(x,S)-\int_{\mathcal G_k(U)}f(x,S)dW(x,S)\right\}
  \]

\subsubsection*{First Variations}

For a \(C^1\) vector field \(X\) on \(N\), the first variation of an \(n\)-varifold \(V\) is defined by
\[
\delta V(X):=
\int_{G_n(N)} \operatorname{div}_S X(x)\,dV(x,S),
\]
where \( \operatorname{div}_S X(x) := \operatorname{tr}_S(DX(x)) \) is the tangential divergence of \(X\) along the \(n\)-plane \(S\). Equivalently, if \(e_1,\ldots,e_n \) is an orthonormal basis of \(S\), then \( \operatorname{div}_S X(x) = \sum_{i=1}^n \langle \nabla_{e_i}X,e_i\rangle. \)

We say that \(V\) has generalized mean curvature \(H\) in an open set \(U\) if
\[
H\in L^1_{\mathrm{loc}}(\|V\|\llcorner U)\qquad\text{and}\qquad\delta V(X)=
-\int_U \langle X,H\rangle\,d\|V\|
\]
for every compactly supported \(C^1\) vector field \(X\) in \(U\). In this paper we shall also use the corresponding free-boundary version, where the above identity is tested only against vector fields tangent to the ambient boundary, this will be recalled in the next subsection.

We shall frequently use the \(n\)-dimensional Jacobian of a linear map on an \(n\)-plane. Let \(A:E\to F \) be a linear map between finite-dimensional inner product spaces, and let \( S\subset E \) be an \(n\)-dimensional subspace. Choose an orthonormal basis \( e_1,\ldots,e_n \) of \(S\). We define
\[
J_n(A|_S):=
|Ae_1\wedge\cdots\wedge Ae_n|=
\sqrt{\det(\langle Ae_i,Ae_j\rangle)_{i,j=1}^n}.
\]
This definition is independent of the choice of orthonormal basis.

If \(f:U\to N'\) is differentiable at \(y\in U\), and if \( S\subset T_yU \) is an \(n\)-plane, we write
\[
J_nf(y,S):=
J_n(Df(y)|_S).
\]
Equivalently, if \(\xi\) is a unit simple \(n\)-vector spanning \(S\), then \( J_nf(y,S) = |\Lambda^n Df(y)(\xi)|. \)

These are standard notation in geometric measure theory; see for instance Federer \cite{Federer1969} and Simon \cite{Simon1983}.

\subsubsection*{Regularity Results}
The following theorem is a consequence of Allard's regularity theorem, restricted to the case of codimension 1 \cite{Allard1972}, see also \cite[Chapter 5]{Simon1983}.

\begin{theorem}
\label{thm:allard-interior}
Let \(n\in\mathbb N\), \(p>n\), and let \(\eta>0\). Then there exist constants \( \varepsilon(n,p,\eta),\ \gamma(n,p)>0, \) with the following property.

Let \(V=v(M,\theta)\) be a rectifiable \(n\)-varifold in \(B_\rho^{n+1}(0)\subset \mathbb R^{n+1}, \) with weight measure \( \mu=\|V\|. \) Assume that \( 0\in \operatorname{spt}\mu\), \( \theta\ge 1 \,\text{for }\mu\text{-a.e.} \) and that the density ratio satisfies \( {\mu(B_\rho(0))}/{\omega_n\rho^n}\le 1+\varepsilon. \) Assume moreover that \(V\) has generalized mean curvature \( H\in L^p(\mu) \) in \(B_\rho(0)\), and that
\[
\rho^{1-\frac np}
\left(
\int_{B_\rho(0)} |H|^p\,d\mu
\right)^{1/p}
\le \varepsilon.
\]
Then, after applying an orthogonal transformation $Q$ of \(\mathbb R^{n+1}\), there exists a function \( u:B_{\gamma\rho}^n(0)\to \mathbb R \) of class \(C^{1,\alpha}\), where \( \alpha=1-\frac np, \) such that \begin{enumerate}
\item $Du(0)=0$.
\item $\operatorname{spt}\mu\cap B_{\gamma\rho}(0)=Q(\operatorname{graph}u)\cap B_{\gamma\rho}(0).$
\item $\rho^{-1}\sup_{B_{\gamma\rho}^n(0)}|u| + \sup_{B_{\gamma\rho}^n(0)}|Du| + \rho^\alpha [Du]_{C^{0,\alpha}(B_{\gamma\rho}^n(0))} \le C\eta,$ where \(C=C(n,p)>0\).
\end{enumerate}
\end{theorem}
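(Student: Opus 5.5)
Theorem~\ref{thm:allard-interior} is a restatement of Allard's regularity theorem in codimension one, so the plan is to reduce it to the general Allard theorem as formulated in \cite[Chapter 5, Theorem 24.2]{Simon1983}, \cite{Allard1972}, and then read off the graphical conclusion.

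\emph{Step 1: scaling.} Rescale by $x\mapsto x/\rho$. The density ratio $\mu(B_\rho(0))/\omega_n\rho^n$ is scale invariant. The quantity $\rho^{1-n/p}\|H\|_{L^p(\mu\llcorner B_\rho)}$ is also scale invariant, since $H$ scales like $\rho^{-1}$ and $d\mu$ like $\rho^n$. We may therefore assume $\rho=1$.

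\emph{Step 2: apply Allard's theorem.} Allard's theorem gives constants $\varepsilon_0(n,p,\eta)$ and $\gamma(n,p)$ with the following property. Suppose $\theta\ge1$ $\mu$-a.e., $0\in\operatorname{spt}\mu$, the density ratio is at most $1+\varepsilon_0$, and $\|H\|_{L^p}\le\varepsilon_0$. Then there is an $n$-plane $P$ and a $C^{1,\alpha}$ map $u:P\cap B_\gamma\to P^\perp$ with $u(0)=0$, such that $\operatorname{spt}\mu\cap B_\gamma=\operatorname{graph}u\cap B_\gamma$ and
\[
\sup|u|+\sup|Du|+[Du]_\alpha\le \eta .
\]
Here $\alpha=1-n/p$. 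Set $\varepsilon=\varepsilon_0$. In codimension one $P^\perp\cong\mathbb R$, so $u$ is scalar. Choose $Q\in O(n+1)$ carrying $\mathbb R^n\times\{0\}$ to $P$, and $e_{n+1}$ to the unit normal of $P$. This gives items~(2) and~(3), with $C$ absorbing the unit comparison constant.

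\emph{Step 3: arrange $Du(0)=0$.} Here the plane $P$ must be replaced by the tangent plane $T=\operatorname{graph}(Du(0))$ at the origin. Since $|Du(0)|\le C\eta$, the plane $T$ is within angle $C\eta$ of $P$. A standard implicit-function argument, using $|Du|\le C\eta$ with $\eta$ small (reduce $\eta$ if necessary, which only shrinks $\varepsilon$), shows the following: on a slightly smaller ball $B_{\gamma/2}$, the set $\operatorname{graph}u$ is also the graph of a function $\tilde u$ over $T$. This $\tilde u$ satisfies $\tilde u(0)=0$ and $D\tilde u(0)=0$, with $C^{1,\alpha}$ bounds controlled by $C\eta$. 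Rename $\gamma/2$ as $\gamma$ and compose $Q$ with the corresponding rotation. Undoing the scaling then gives the factors $\rho^{-1}$ and $\rho^\alpha$ in item~(3).

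\emph{Main obstacle.} The main technical point is Step~3, the re-graphing over the tilted plane with uniform $C^{1,\alpha}$ control. It is routine but requires care with how the radius shrinks. Everything else is a direct citation of Allard's theorem.
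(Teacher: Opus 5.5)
Your proposal is correct and matches the paper, which gives no independent argument but simply invokes Allard's regularity theorem in codimension one (\cite{Allard1972}, \cite[Chapter 5]{Simon1983}); your Steps 1--2 are exactly that citation. Your Step 3 correctly supplies the normalization $Du(0)=0$: you re-graph over the tangent plane at the origin on a half-size ball, after capping $\eta$ by a fixed small $\eta_0$, and this keeps $\gamma$ independent of $\eta$. The standard formulation only gives $u(0)=0$, and the paper's bare citation leaves this step implicit.
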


To adapt the case of free-boundary in this article, we shall use the analogue free-boundary version of Allard's regularity theorem. See \cite[Theorem 4.13]{GruterJost1986}.

\begin{theorem}\label{G}
Let \(n\in\mathbb N\), \(p>n\), and let \(\eta>0\). Then there exist constants \( \gamma(n,p),\,\varepsilon(n,p,\eta)>0 \) with the following property.

Let \(\Gamma\subset B_\rho^{n+1}(0)\subset \mathbb R^{n+1}\) be a \(C^2\) hypersurface with \(0\in\Gamma\). Let \(\Omega\) be one of the two sides of \(\Gamma\). Assume that the curvature of \(\Gamma\) satisfies \(\rho \sup_{\Gamma\cap B_\rho(0)} |A_\Gamma| \le \varepsilon^2. \)

Let \(V=v(M,\theta)\) be a rectifiable \(n\)-varifold in \(\overline \Omega\cap B_\rho^{n+1}(0)\), and denote \( \mu=\|V\|. \) Assume that \( 0\in \operatorname{spt}\mu\), \(\theta^n(\mu,x)\ge 1 \text{ for }\mu\text{-a.e.}\) and that \({\mu(B_\rho(0))}/{\omega_n\rho^n} \le \frac12(1+\varepsilon). \) Assume that \(V\) has free-boundary generalized mean curvature \(H\in L^p(\mu)\) in \(B_\rho(0)\), and that
\[
\rho^{1-\frac np}
\left(
\int_{B_\rho(0)} |H|^p\,d\mu
\right)^{1/p}
\le \varepsilon.
\]
Then, after applying an orthogonal transformation $Q$ of \(\mathbb R^{n+1}\), there exists a function \( u:B_{\gamma\rho}^n(0)\to \mathbb R \) of class \(C^{1,\alpha}\), where \( \alpha=\min\{\frac12,1-\frac np\}, \) such that \begin{enumerate}
\item $u(0)=0$.
\item $\operatorname{spt}\mu\cap B_{\gamma\rho}(0)=Q(\operatorname{graph}u)\cap B_{\gamma\rho}(0)\cap \overline\Omega$.
\item $\rho^{-1}\sup_{D_{\gamma\rho}} |u|+\sup_{D_{\gamma\rho}} |Du|+\rho^\alpha[Du]_{C^{0,\alpha}(D_{\gamma\rho}^n)}\le C\eta,$ where \(C=C(n,p)>0\) and $D_{\gamma\rho}=\operatorname{proj}_{\mathbb R^n\times \mathbb R \to \mathbb R^n}\left(\operatorname{graph}u\cap Q^{-1}\bigl(B_{\gamma\rho}^{n+1}(0)\cap \overline{\Omega}\bigr)\right).$
\item $\nu_\Gamma(x)\in T_xQ(\operatorname{graph}u)$ for $x\in Q(\operatorname{graph}u)\cap\Gamma\cap B_{\gamma\rho}(0).$
\end{enumerate}
\end{theorem}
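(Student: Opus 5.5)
This is the free-boundary Allard theorem of Gr\"uter--Jost \cite[Theorem 4.13]{GruterJost1986}, quoted rather than reproved. If I had to prove it, I would reduce it to the interior version, Theorem~\ref{thm:allard-interior}, by \emph{reflecting across \(\Gamma\)}.

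\textbf{Step 1: the reflection and the doubled varifold.} Since \(\rho\sup|A_\Gamma|\le\varepsilon^2\), the nearest-point projection onto \(\Gamma\) is well defined on \(B_\rho(0)\) (after shrinking \(\rho\) by a fixed factor). Define the reflection
\[
\sigma(x)=2\pi_\Gamma(x)-x .
\]
This is a bi-Lipschitz involution fixing \(\Gamma\) pointwise and exchanging the two sides. Its differential satisfies \(|D\sigma - R|\le C\varepsilon^2\rho^{-1}\,\mathrm{dist}(x,\Gamma)\), where \(R\) is the linear reflection across \(T\Gamma\), and \(D\sigma\) is Lipschitz but not better. Set \(\widetilde V:=V+\sigma_\# V\), a rectifiable varifold in \(B_{\rho'}(0)\) with multiplicity \(\ge1\). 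Its density ratio at \(0\) is at most twice that of \(V\) up to an \(O(\varepsilon)\) error coming from the non-isometry of \(\sigma\). The hypothesis \(\le\tfrac12(1+\varepsilon)\) therefore gives a ratio \(\le 1+C\varepsilon\), which is the hypothesis of Theorem~\ref{thm:allard-interior}.

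\textbf{Step 2: first variation of \(\widetilde V\) (the main obstacle).} For a compactly supported test field \(X\), write
\[
\delta\widetilde V(X)=\delta V\bigl(X+\sigma^{*}X\bigr)+\mathcal E ,
\qquad \sigma^{*}X:=D\sigma^{-1}(X\circ\sigma).
\]
Here \(\mathcal E\) collects the Jacobian and tangential-divergence errors from \(\sigma\) not being an isometry; it is bounded by \(C\varepsilon^2\rho^{-1}\int|X|\,d\mu\).

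The field \(Y=X+\sigma^{*}X\) is tangent to \(\Gamma\) along \(\Gamma\), because the normal components cancel there. So the free-boundary first-variation identity applies to \(Y\), giving \(\delta V(Y)=-\int\langle Y,H\rangle\,d\mu\). The catch is that \(Y\) is only Lipschitz, not \(C^1\). I would handle this by approximating \(Y\) by \(C^1\) tangential fields and checking that no boundary measure survives in the limit. This is exactly the point where the free-boundary hypothesis (no singular part of \(\delta V\) along \(\Gamma\) in tangential directions) is used.

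The conclusion is that \(\widetilde V\) has generalized mean curvature \(\widetilde H\) satisfying
\[
|\widetilde H|\le C\bigl(|H|+|H|\circ\sigma+\varepsilon^2\rho^{-1}\bigr),
\]
so \(\rho^{1-n/p}\|\widetilde H\|_{L^p}\le C\varepsilon\). Near the boundary I would also invoke the Gr\"uter--Jost monotonicity formula to confirm that \(0\in\operatorname{spt}\|\widetilde V\|\) and that the density control persists at all small scales.

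\textbf{Step 3: apply interior Allard and restrict.} Working in the metric \(\sigma^*\)-symmetrized so that it stays \(C^{0,1}\)-close to Euclidean, I apply Theorem~\ref{thm:allard-interior} to \(\widetilde V\). This gives a \(C^{1,\alpha}\) graph \(Q(\operatorname{graph}\tilde u)\) describing \(\operatorname{spt}\|\widetilde V\|\cap B_{\gamma\rho}\), with the stated estimates.

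The exponent drops to \(\alpha\le\tfrac12\) because the reflected data are only Lipschitz-regular in the normal direction. The estimate on \(\mathcal E\) behaves like \(\mathrm{dist}\cdot\varepsilon^2\), which in the Campanato iteration limits the decay rate to \(1/2\).

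Finally, \(\operatorname{spt}\|\widetilde V\|\) is \(\sigma\)-invariant. By uniqueness of the graphical representation, the graph is invariant under \(\sigma\), so its tangent plane at points of \(\Gamma\) is invariant under the reflection \(R\). It therefore contains \(\nu_\Gamma\), which is item~4. Intersecting with \(\overline\Omega\) gives items~2 and~3 on the projected domain \(D_{\gamma\rho}\), and translating so that \(u(0)=0\) gives item~1.
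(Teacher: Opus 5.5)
\textbf{There is a genuine gap in Step 2.} The paper itself does not prove this statement; it only quotes \cite[Theorem 4.13]{GruterJost1986}, as you note. The problem is in the reflection sketch you add. The bound \(|\mathcal E|\le C\varepsilon^2\rho^{-1}\int|X|\,d\mu\) is false in general.

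Because \(D\sigma-R=O(\varepsilon^2\rho^{-1}\operatorname{dist}(x,\Gamma))\), \(\sigma\) is only an approximate isometry. So \(\sigma_\#V\) is critical for the area of the metric \(\sigma^*g_{\mathrm{eucl}}\), not for Euclidean area. Comparing \(\delta(\sigma_\#V)(X)\) with \(\delta V(\sigma^*X)\) leaves terms such as \(\int (J\sigma-1)\operatorname{div}_S(\sigma^*X)\,dV\), together with the derivative of \(J\sigma(x,S)\) in the plane variable. These terms have size \(\varepsilon^2\rho^{-1}\operatorname{dist}(x,\Gamma)|DX|\) with plane-dependent weights. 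Because the weights depend on \(S\), you cannot integrate them by parts into \(\int|X|\).

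Concretely, for smooth \(V\) one has
\[
H_{\sigma(V)}=R\,H_V+O\bigl(\varepsilon^2\rho^{-1}\operatorname{dist}(x,\Gamma)|A_V|\bigr)+O(|D^2\sigma|).
\]
Neither extra term is controlled by the hypotheses:
\begin{enumerate}
\item \(|A_V|\) is uncontrolled, since you only know \(H\in L^p\).
\item \(|D^2\sigma|\) contains \(\operatorname{dist}\cdot\nabla A_\Gamma\). For a \(C^2\) hypersurface \(D\sigma\) is merely continuous, so \(\sigma^*X\) is only \(C^0\), not Lipschitz as you claim.
\end{enumerate}
Hence \(\widetilde V\) is not shown to have generalized mean curvature in \(L^p\) with small scale-invariant norm. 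Theorem~\ref{thm:allard-interior} therefore cannot be invoked, and approximating \(Y\) by \(C^1\) tangential fields does not repair this.

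\textbf{What the reflection does give.} Use the \(C^1\) field \(Y=X+R(x)X(\sigma(x))\), where \(R(x)=I-2\nu_\Gamma(\pi_\Gamma x)\otimes\nu_\Gamma(\pi_\Gamma x)\). This field is tangent to \(\Gamma\) along \(\Gamma\) and satisfies \(|DR|\le C\varepsilon^2\rho^{-1}\). With it one obtains only
\[
|\delta\widetilde V(X)|\le\int|H|\,|Y|\,d\mu+C\varepsilon^2\rho^{-1}\int\bigl(|X|+\operatorname{dist}(x,\Gamma)|DX|\bigr)\,d\|\widetilde V\|.
\]
A proof along your lines must re-run Allard's argument under this weaker hypothesis rather than quote the interior theorem. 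That means redoing the tilt-excess estimate, Lipschitz approximation, harmonic approximation and excess decay. This is where the real work of the cited result lies.

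\textbf{Step 3 is also internally inconsistent.} If Step 2 held, the black-box interior theorem would give \(\alpha=1-\frac np\) with no cap. So the restriction \(\alpha\le\frac12\) in the statement cannot arise from the argument as you have set it up. It is better read as a symptom of exactly the \(\operatorname{dist}(x,\Gamma)|DX|\) error you dropped. Unlike an honest mean-curvature term, that error can enter the tilt-excess estimate linearly rather than quadratically.
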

\begin{remark}
\label{rem:Riemannian-regularity}
We shall use Theorems~\ref{thm:allard-interior} and~\ref{G} in their local Riemannian forms. At an interior point we work in geodesic normal coordinates, while at a boundary point we use Fermi coordinates. After rescaling a coordinate ball of radius \(R\) to unit size, the pulled-back metrics converge uniformly in \(C^2\) to the Euclidean metric as \(R\to0\). Consequently, after decreasing the smallness thresholds if necessary, the constants in the corresponding regularity theorems may be chosen uniformly on sufficiently small coordinate balls.

Accordingly, in the proof of Proposition \ref{convergence as graphs} we verify the mass-ratio and mean curvature conditions intrinsically in \((N,\partial N,g)\).\end{remark}

\subsection{Free-Boundary Variations}
The following notions are standard in the free-boundary setting. See for example \cite{GruterJost1986}\cite[Section 2.2]{FBMHI}\cite[Section 2]{ACS2018}.

\subsubsection*{Free--Boundary First Variations}
We now recall the class of admissible vector fields in the free-boundary setting. Let \((N^{n+1},\partial N,g)\) be a smooth Riemannian manifold with boundary. We denote by \( \mathfrak X^{\partial}(N) \) the space of smooth vector fields \(X\) on \(N\) satisfying
\[
X(p)\in T_p\partial N
\qquad
\text{for every }p\in\partial N.
\]
Equivalently, if \(\mu_{\partial N}\) denotes the unit normal to \(\partial N\) in \(N\), then \( \langle X,\mu_{\partial N}\rangle=0 \text{ on }\partial N. \) Note that if \(X\in\mathfrak X^{\partial}(N)\), then its local flow \(\psi_t\) preserves the ambient boundary, i.e. \( \psi_t(\partial N)\subset \partial N \) for all sufficiently small \(t\). This is the natural class of variations for free-boundary problems. If \(\Sigma^n\subset N\) is a properly embedded hypersurface with \( \partial\Sigma\subset\partial N, \) then the flow of any \(X\in\mathfrak X^{\partial}(N)\) sends \(\Sigma\) to a family of hypersurfaces whose boundaries remain on \(\partial N\), i.e. \( \partial(\psi_t(\Sigma))\subset\partial N. \)

An \(n\)-varifold \(V\) in \(N\) is said to have free-boundary first variation with generalized mean curvature \(H\) if
\[
\delta V(X)
=
-\int_N \langle X,H\rangle\,d\|V\|
\]
for every compactly supported vector field \( X\in\mathfrak X^{\partial}(N). \) In particular, \(V\) is called free-boundary stationary if \( \delta V(X)=0 \) for every compactly supported \( X\in\mathfrak X^{\partial}(N). \)

For a smooth properly embedded hypersurface \(\Sigma\subset N\), the free-boundary stationarity condition is equivalent to
\[
H_\Sigma=0
\quad\text{in }\Sigma\qquad\text{and}\qquad\Sigma\perp\partial N
\quad\text{along }\partial\Sigma.
\]
Indeed, if \(\eta\) denotes the outward conormal of \(\partial\Sigma\) in \(\Sigma\), then the first variation formula gives
\[
\delta\Sigma(X)=
-\int_\Sigma \langle H_\Sigma,X\rangle\,d\mathcal H^n+
\int_{\partial\Sigma}\langle X,\eta\rangle\,d\mathcal H^{n-1},
\]
the stationary condition then follows immediately.

\subsubsection*{Second Variations}
\label{subsec:second-variation}

Let \( \Sigma^n\subset (N^{n+1},\partial N,g) \) be a compact, two-sided, properly embedded free-boundary minimal hypersurface, and let \(\nu\) be a global unit normal vector field along \(\Sigma\). We denote by \(A_\Sigma\) the second fundamental form of \(\Sigma\), and by \(h^{\partial N}\) the second fundamental form of \(\partial N\) in \(N\).

For \(u,v\in H^1(\Sigma)\), the free-boundary second-variation bilinear form is
\[
Q(u,v)=
\int_\Sigma
\left(
\langle\nabla u,\nabla v\rangle-
\bigl(
|A_\Sigma|^2+\operatorname{Ric}_N(\nu,\nu)
\bigr)uv
\right)
\,d\mathcal H^n-
\int_{\partial\Sigma}
h^{\partial N}(\nu,\nu)\,uv
\,d\mathcal H^{n-1},
\]
where the boundary term is well defined for \(H^1\)-functions by the trace theorem. The associated Jacobi operator is
\[
L_\Sigma u=
\Delta_\Sigma u+
\bigl(
|A_\Sigma|^2+\operatorname{Ric}_N(\nu,\nu)
\bigr)u
\]
together with its natural free-boundary Robin boundary condition. We use the weak eigenvalue formulation: a number \(\lambda\in\mathbb R\) is an eigenvalue if there exists a nonzero function \(\varphi\in H^1(\Sigma)\) such that
\[
Q(\varphi,w)=
\lambda\int_\Sigma \varphi w\,d\mathcal H^n
\qquad
\text{for every }w\in H^1(\Sigma).
\]
By elliptic regularity, every eigenfunction is smooth up to \(\partial\Sigma\).

The spectrum is discrete and may be written, counting multiplicity, as \( \lambda_1\le\lambda_2\le\cdots\longrightarrow+\infty. \) Let \( \{\varphi_i\}_{i=1}^{\infty} \) be a corresponding \(L^2(\Sigma)\)-orthonormal basis of eigenfunctions.

The Morse index of \(\Sigma\) is the maximal dimension of a subspace of \(H^1(\Sigma)\) on which \(Q\) is negative definite, and the nullity of \(\Sigma\) is the dimension of \(\ker Q\). Equivalently,
\[
\operatorname{index}(\Sigma)=
\#\{i:\lambda_i<0\},
\qquad
\operatorname{nullity}(\Sigma)=
\#\{i:\lambda_i=0\},
\]
where eigenvalues are counted with multiplicity. Define
\[
E_-:=
\operatorname{span}\{\varphi_i:\lambda_i<0\}
\qquad
E_0:=
\operatorname{span}\{\varphi_i:\lambda_i=0\}
\qquad E_+:=
\overline{
\operatorname{span}\{\varphi_i:\lambda_i>0\}
}^{\,H^1(\Sigma)},
\]
Then \( H^1(\Sigma) = E_-\oplus E_0\oplus E_+. \) This decomposition is orthogonal in \(L^2(\Sigma)\), and the summands are also mutually \(Q\)-orthogonal. Moreover, \(Q\) is negative definite on \(E_-\), vanishes on \(E_0\), and is positive definite on \(E_+\). For similar decomposition and its property, see \cite[Proposition-Definition 2.23]{CMC23}.

In particular, if \(\operatorname{index}(\Sigma)=k,\) and \( \operatorname{nullity}(\Sigma)=0, \) then \( \lambda_k<0<\lambda_{k+1}, \) and
\[
E_-=\operatorname{span}\{\varphi_1,\ldots,\varphi_k\},
\qquad
H^1(\Sigma)=E_-\oplus E_+.
\]
Since \(E_-\) is finite dimensional and the spectrum has a gap at zero, together with G{\aa}rding inequality, we imply that there exists \(c_0>0\) such that
\[
Q(v,v)\le
-c_0\|v\|_{H^1(\Sigma)}^2
\qquad
Q(w,w)\ge
c_0\|w\|_{H^1(\Sigma)}^2
\qquad 
\text{for every }v\in E_-,\,w\in E_+.
\]

We say that \(\Sigma\) is stable if \( Q(u,u)\ge0\) for every \(u\in H^1(\Sigma),\) or equivalently if \(\lambda_1\ge0\). We say that \(\Sigma\) is strictly stable if \( Q(u,u)>0\) for every non-zero \(u\in H^1(\Sigma), \) or equivalently if \( \lambda_1>0. \) Thus strict stability is precisely the special case \( \operatorname{index}(\Sigma)=0,\, \operatorname{nullity}(\Sigma)=0. \)

Finally, if ambient metric is bumpy, then every free-boundary minimal hypersurface is non-degenerate, consequently every stable free-boundary minimal hypersurface is automatically strictly stable.

\section{Construction of Tubular Neighborhood}\label{3.1}
For the rest of this paper, we work with a compact smooth Riemannian manifold with boundary $(N^{n+1},g)$ and $\Sigma^n\subset N$ be a compact, two-sided, properly embedded free-boundary minimal hypersurface, thus $\partial \Sigma\subset \partial N$ and \(\Sigma\) meets \(\partial N\) orthogonally along \(\partial\Sigma\). Here properly embedded means that \( \Sigma\cap\partial N=\partial\Sigma. \)

An important prework is to choose a tubular neighborhood which is compatible with the ambient boundary. Foliated neighborhoods of non-degenerate free-boundary minimal hypersurfaces were also constructed by Wang~\cite[Lemma 2.5, 2.6]{WZC}. In particular, Wang obtained free-boundary foliations with mean curvature of fixed sign on the nearby leaves. In this paper we use a slightly different adapted tubular parametrization, based on a defining function whose gradient is tangent to \(\partial N\). This parametrization is chosen so that the associated projection and retraction preserve the relative pair and admit the Jacobian estimates needed for the later argument.

\begin{lemma}\label{Construction of NBHD}

There exist an open neighborhood \(\mathcal U\) of \(\Sigma\) in \(N\) and a smooth function \(\tau:\mathcal U\to \mathbb R \) such that\begin{enumerate}
\item $\Sigma=\tau^{-1}(0)$.
\item $d\tau\neq 0$ on $ \mathcal U.$
\item $\partial_{\mu_{\partial N}}\tau=0$ on $\mathcal U\cap \partial N,$ where \(\mu_{\partial N}\) denotes the unit inward normal to \(\partial N\) in \(N\). Equivalently, \( \nabla \tau \in T\partial N\) on \(\mathcal U\cap \partial N\).
\end{enumerate}
\end{lemma}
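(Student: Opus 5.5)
The plan is to build \(\tau\) as a signed-distance-type coordinate transverse to \(\Sigma\), using a flow that preserves \(\partial N\). Since \(\Sigma\) is two-sided with unit normal \(\nu\), and \(\Sigma\) meets \(\partial N\) orthogonally, the vector \(\nu\) along \(\partial\Sigma\) is tangent to \(\partial N\): it is orthogonal to \(T\Sigma\ni\mu_{\partial N}\). We first extend \(\nu\) to a smooth vector field \(X\) on a neighborhood of \(\Sigma\) in \(N\) with \(X\in\mathfrak X^{\partial}(N)\) and \(X|_\Sigma=\nu\). To do this, take any smooth extension \(\tilde X\) of \(\nu\), and near \(\partial N\) subtract its normal component \(\langle\tilde X,\mu_{\partial N}\rangle\mu_{\partial N}\), extended via Fermi coordinates and cut off by a function equal to \(1\) near \(\partial N\). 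This correction vanishes along \(\Sigma\) near \(\partial\Sigma\), provided the Fermi-coordinate extension of the normal component vanishes on \(\Sigma\). To ensure that, we choose \(\tilde X\) near \(\partial\Sigma\) in Fermi coordinates \((y,s)\in\partial N\times[0,\delta)\) as follows: take the tangential vector \(\nu\) on \(\partial\Sigma\), extend it to be tangent to the slices \(\{s=\text{const}\}\), and then correct it to match \(\nu\) on \(\Sigma\). Orthogonality guarantees this is consistent to first order. Alternatively, we can define \(X\) with \(X|_\Sigma=\nu\) and \(X\) tangent on \(\partial N\) by a partition of unity, since these two conditions are compatible on \(\Sigma\cap\partial N=\partial\Sigma\).

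Next we consider the map \(F:\Sigma\times(-\varepsilon,\varepsilon)\to N\), \(F(x,t)=\phi_t(x)\), where \(\phi_t\) is the flow of \(X\). Because \(X\) is tangent to \(\partial N\), the flow is defined for small \(|t|\), maps \(\partial N\) into \(\partial N\) and interior to interior, so \(F\) maps \(\partial\Sigma\times(-\varepsilon,\varepsilon)\) into \(\partial N\). Its differential at \(t=0\) is \(v+s\nu\), which is an isomorphism \(T_x\Sigma\oplus\mathbb R\to T_xN\). By compactness of \(\Sigma\) and the inverse function theorem, injectivity of \(F\) on \(\Sigma\) together with a standard argument shows that \(F\) is a diffeomorphism onto an open neighborhood \(\mathcal U\) of \(\Sigma\) for \(\varepsilon\) small. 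Here one applies the inverse function theorem for manifolds with boundary, using that \(F\) maps boundary to boundary. Moreover, \(F^{-1}(\mathcal U\cap\partial N)=\partial\Sigma\times(-\varepsilon,\varepsilon)\): since the flow preserves \(N\setminus\partial N\), interior points go to interior points.

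We then define \(\tau:=\mathrm{pr}_2\circ F^{-1}\). We get \(\tau^{-1}(0)=\Sigma\), and \(d\tau\neq0\) because \(d\tau(X)=1\), which proves (1) and (2). For (3), at \(p\in\mathcal U\cap\partial N\), the boundary \(\partial N\) near \(p\) is the image of \(\partial\Sigma\times(-\varepsilon,\varepsilon)\), which contains the \(t\)-direction. Thus \(T_p\partial N=dF(T\partial\Sigma\oplus\mathbb R\partial_t)\), and \(\tau\) restricted to \(\partial N\) is a submersion. This alone does not give \(\partial_{\mu}\tau=0\); that requires \(\mu_{\partial N}\in dF(T\Sigma)\), i.e. that the level sets \(\tau^{-1}(t)=\phi_t(\Sigma)\) meet \(\partial N\) orthogonally. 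This is the main obstacle. Flowing by an arbitrary tangential extension only preserves orthogonality at \(t=0\).

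To fix this, I would make a specific choice near \(\partial N\). In Fermi coordinates \((y,s)\), with \(\partial N=\{s=0\}\) and metric \(ds^2+g_s\), let \(\gamma:=\partial\Sigma\subset\partial N\), a hypersurface of \(\partial N\). Let \(\sigma(y)\) be the signed distance to \(\gamma\) within \((\partial N,g_0)\), and near \(\partial N\) set \(\tau_0(y,s):=\sigma(y)\). Then \(\partial_s\tau_0=0\) identically, so (3) holds, and \(d\tau_0\neq0\). By orthogonality, \(\Sigma\) coincides with \(\{\sigma=0\}\) to first order along \(\partial N\), but not exactly. I would therefore instead straighten \(\Sigma\): choose coordinates near \(\partial\Sigma\) of the form \((x,s,t)\), with \(x\in\partial\Sigma\), \(s\) the distance to \(\partial N\), and \(t\) obtained by flowing along the field \(\nabla^{g}\) of \(\partial N\)-tangential normal to the slices. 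Concretely, take \(\tau(y,s)=\tilde\sigma_s(y)\), where \(\tilde\sigma_s\) is the signed distance in the slice \(\{s\}\times\partial N\) (with metric \(g_s\)) to \(\Sigma\cap\{s\}\). That intersection is a smooth hypersurface of the slice, by transversality of \(\Sigma\) with the slices, which follows from orthogonality. This function vanishes exactly on \(\Sigma\), is smooth, and has \(d\tau\neq0\). Its \(s\)-derivative at \(s=0\) vanishes precisely because \(\Sigma\) is orthogonal to \(\partial N\), so \(\Sigma\cap\{s\}\) varies with first-order zero speed in the slice directions. Finally I would glue this \(\tau\) to the interior flow coordinate \(t\) by a partition of unity \(\chi(s)\tau+(1-\chi(s))t\), with \(\chi\equiv1\) near \(s=0\). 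Both functions vanish on \(\Sigma\) and have positive \(\nu\)-derivative there, so the combination still satisfies (1) and (2) after shrinking \(\mathcal U\), and (3) holds since \(\chi\equiv1\) near \(\partial N\).
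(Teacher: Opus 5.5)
There is a genuine gap, and it sits in your final construction. Your diagnosis that (3) is equivalent to every level set of $\tau$ meeting $\partial N$ orthogonally is correct, and you were right to discard the flow construction. The trouble is $\tau(y,s)=\tilde\sigma_s(y)$, where you measure the signed distance to $\gamma_s:=\Sigma\cap\{s\}$ with the slice metric $g_s$.

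Condition (3) must hold at every point of $\mathcal U\cap\partial N$, not only on $\partial\Sigma$. On $\partial\Sigma$ it is automatic for any defining function, by orthogonality. Differentiating $\tau$ in $s$ at $s=0$ gives two contributions:
the motion of $\gamma_s$, whose normal speed does vanish by orthogonality, and the variation of the metric, $\partial_s g_s|_{s=0}(X,Y)=2\langle\nabla_X\mu_{\partial N},Y\rangle$.

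The second contribution does not vanish. It gives $\partial_s\tau(y,0)=\tau(y,0)\,\langle\nabla_{\nu_\Sigma}\mu_{\partial N},\nu_\Sigma\rangle+O(\tau(y,0)^2)$, with the curvature evaluated at the foot point on $\partial\Sigma$.

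For a concrete example, take the unit ball in $\mathbb R^3$ and let $\Sigma$ be the equatorial disk. Fermi coordinates $x=(1-s)y$ give $g_s=(1-s)^2g_{S^2}$, so your function is $\tau(x)=|x|\arcsin(x_3/|x|)$. Then $\partial_{\mu_{\partial N}}\tau=-\arcsin x_3\neq0$ on $S^2$ away from the equator. Since $\chi\equiv1$ near $\partial N$, the glued function inherits this failure. Geometrically, the $g_s$-equidistant level sets tilt because the slices are rescaled by the second fundamental form of $\partial N$.

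The repair is to use the fixed metric $g_0$ on every slice, $\tau(y,s):=\operatorname{dist}^{\pm}_{g_0}(y,\gamma_s)$. Then $\tau$ depends on $s$ only through $\gamma_s$, whose normal velocity vanishes identically at $s=0$. Hence $\partial_s\tau(\cdot,0)\equiv0$ on the whole boundary chart, and (1), (2) and your gluing with the interior coordinate are unaffected.

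This is the same mechanism as in the paper, which uses local graphs instead of distance functions. It takes boundary coordinates $(z,y,u)$ with $\partial_u=\mu_{\partial N}$ on $\{u=0\}$ and writes $\Sigma=\{y=f(z,u)\}$. It sets $\tau_p=y-f(z,u)$, so that $\partial_u\tau_p(z,y,0)=-\partial_uf(z,0)=0$ for every $y$; the level sets are $y$-translates of $\Sigma$. It then patches these local functions with a partition of unity satisfying $\partial_{\mu_{\partial N}}\chi_\alpha=0$.

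In both versions, the essential point is that the defining function depends on the normal variable $s$ (respectively $u$) only through $\Sigma$ itself, never through an $s$-dependent metric.
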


\begin{proof}

We first construct \(\tau\) locally and then patch the local functions.

Let \(p\in \partial\Sigma\). Since \(\Sigma\) is properly embedded and meets \(\partial N\) orthogonally, we may choose local coordinates \( (z_1,\ldots,z_{n-1},y,u) \) near \(p\), where
\[
\partial N=\{u=0\},
\quad
N=\{u\ge 0\},\quad\text{and}\quad\partial\Sigma=\{y=0,\ u=0\}.
\]
Here \(z=(z_1,\ldots,z_{n-1})\) are coordinates along \(\partial\Sigma\), \(y\) is the direction in \(\partial N\) normal to \(\partial\Sigma\), and \(u\) is the boundary collar coordinate. By the implicit function theorem, after shrinking the coordinate neighborhood (denote $U_p$) if necessary, \(\Sigma\) can be written as \( \Sigma=\{y=f(z,u)\} \) for a smooth function \(f\). Since \(\partial\Sigma\subset \partial N\) is given by \(y=0\) when \(u=0\), we have \(f(z,0)=0. \) The free-boundary orthogonality condition implies \(\partial_u f(z,0)=0. \) Therefore the local function \( \tau_p(z,y,u):=y-f(z,u) \) satisfies
\[
\tau_p=0 \quad \text{on } \Sigma\qquad\text{and}\qquad\tau_p \neq 0 \text{ by } \partial_y \tau_p=1.
\]
Furthermore,
\[
\partial_u \tau_p(z,y,0)=
-\partial_u f(z,0)=0.
\]
Thus \( \partial_{\mu_{\partial N}}\tau_p=0\) on \(\partial N \) in this coordinate patch.

For points \(p\in \Sigma\setminus\partial\Sigma\), we choose ordinary local coordinates in the interior of \(N\) and take a local defining function \(\tau_p\) for \(\Sigma\). Since \(\Sigma\) is two-sided, the local defining functions can be chosen with consistent sign, i.e. $d\tau_p(\nu_\Sigma) > 0$ on $U_p\cap \Sigma$, where $\nu_\Sigma$ is the global unit normal on $\Sigma$.

Take a finite open cover \(\{U_\alpha\}\) of \(\Sigma\) by such coordinate neighborhoods. Choose a smooth partition of unity \(\{\chi_\alpha\}\) subordinate to \(\{U_\alpha\}\), with $\partial_{\mu_{\partial N}}\chi_\alpha=0$ on $\partial N.$ This is obtained by first taking a partition of unity on \(\partial N\), extending it constantly along the boundary collar direction, and then adding interior cutoff functions supported away from \(\partial N\). Define \( \tau:=\sum_\alpha \chi_\alpha \tau_\alpha. \) Since each \(\tau_\alpha\) and the terms \(d\chi_\alpha\,\tau_\alpha\) vanishes on \(\Sigma\), we have
\[
\tau=0 \quad\text{  and  }\quad d\tau=
\sum_\alpha \chi_\alpha d\tau_\alpha
\quad
\mbox{on } \Sigma.
\]
By $d\tau_\alpha(\nu_\Sigma) > 0$ for each $\alpha$, $d\tau$ is nonzero on \(\Sigma\). Shrinking \(\mathcal U\) if necessary, we may assume \( d\tau\neq 0\) on \(\mathcal U. \)

Finally, on \(\mathcal U\cap\partial N\), each local defining function satisfies \( \partial_{\mu_{\partial N}}\tau_\alpha=0, \) and the partition functions satisfy \( \partial_{\mu_{\partial N}}\chi_\alpha=0. \) Therefore
\[
\partial_{\mu_{\partial N}}\tau=
\sum_\alpha
(\partial_{\mu_{\partial N}}\chi_\alpha)\tau_\alpha+
\sum_\alpha
\chi_\alpha(\partial_{\mu_{\partial N}}\tau_\alpha)=0
\quad
\mbox{on } \mathcal U\cap\partial N.
\]
This proves the lemma.
\end{proof}

We now define a transverse vector field by \( Z:=\frac{\nabla \tau}{|\nabla \tau|^2}. \) Here \(\nabla\tau\) is the gradient with respect to the ambient metric \(g\). Since \(d\tau\neq0\) on \(U\), this vector field is smooth. Moreover,
\[
d\tau(Z)=
g\left(\nabla\tau,\frac{\nabla\tau}{|\nabla\tau|^2}\right)=1.
\]
Because \(\nabla\tau\in T\partial N\) on \(U\cap\partial N\), we also have \( Z\in T\partial N \) on $U\cap\partial N.$ Thus the flow of \(Z\) preserves \(\partial N\). Let \(\Phi_s\) denote the associated local flow of \(Z\). Define
\[
F:\Sigma\times (-r_0,r_0)\to N \qquad \text{by }F(x,s):=\Phi_s(x).
\]
After shrinking \(r_0>0\) if necessary, the map \(F\) is a diffeomorphism onto its image. Indeed, \( F(x,0)=x, \) and \( \partial_s F(x,0)=Z(x). \) Since \(\tau=0\) on \(\Sigma\) and \(d\tau(Z)=1\), the vector \(Z(x)\) is transverse to \(T_x\Sigma\). Hence
\[
D F_{(x,0)}:T_x\Sigma\oplus \mathbb R\to T_xN
\]
is an isomorphism. The inverse function theorem gives the local result, and compactness of \(\Sigma\) gives a uniform \(r_0\). Furthermore, we also have $F(\partial\Sigma\times(-r_0,r_0))\subset \partial N$ since \(Z\) is tangent to \(\partial N\) and its flow also preserves \(\partial N\).

For \(x\in\Sigma\), we compute
\[
\frac{d}{ds}\tau(F(x,s))=
d\tau(\partial_sF(x,s))=
d\tau(Z(F(x,s)))=1.
\]
Since \(\tau(F(x,0))=0\), it follows that \(\tau(F(x,s))=s\) is exactly the height function of $F$. Therefore
\[
F(\Sigma\times\{s\})=\{\tau=s\}\cap F(\Sigma\times(-r_0,r_0)).
\]
We denote \(\Sigma_s:=F(\Sigma\times\{s\})=\{\tau=s\}\cap \mathcal U.\) Since \(T\Sigma_s=\ker d\tau,\) and \(Z=\frac{\nabla\tau}{|\nabla\tau|^2},\) we have \(Z\perp T\Sigma_s.\) Consequently the foliation \(\{\Sigma_s\}_{|s|<r_0}\) is transverse and orthogonal to the flow direction \(Z\). For \(0<r<r_0\), define
\[
K_r:=F(\Sigma\times[-r,r])\qquad A_r:=K_r\cap \partial N.
\]
Since the flow preserves \(\partial N\), we have \(A_r=F(\partial\Sigma\times[-r,r]).\) We also define the tubular projection
\[
\pi_r:K_r\to \Sigma \qquad\text{by }\pi_r(F(x,s))=x.
\]
See Figure~\ref{fig:tubular-retraction} for an illustration:

\begin{figure}[H]
\centering
\begin{tikzpicture}[x=1cm,y=1cm]

  \begin{scope}
    \path[fill=fbblue!9]
      plot[domain=0:3,samples=50]
        ({1.15+0.10*\x*\x-0.45/sqrt(1+0.04*\x*\x)},
         {\x+0.09*\x/sqrt(1+0.04*\x*\x)})
      -- plot[domain=3:0,samples=50]
        ({1.15+0.10*\x*\x+0.45/sqrt(1+0.04*\x*\x)},
         {\x-0.09*\x/sqrt(1+0.04*\x*\x)}) -- cycle;

    \draw[line width=.9pt] (-0.05,0)--(3.05,0)
      node[below right] {$\partial N$};
    \draw[auxiliary]
      plot[domain=0:3,samples=50]
        ({1.15+0.10*\x*\x-0.45/sqrt(1+0.04*\x*\x)},
         {\x+0.09*\x/sqrt(1+0.04*\x*\x)});
    \draw[maincurve]
      plot[domain=0:3,samples=40] ({1.15+0.10*\x*\x},\x);
    \draw[auxiliary]
      plot[domain=0:3,samples=50]
        ({1.15+0.10*\x*\x+0.45/sqrt(1+0.04*\x*\x)},
         {\x-0.09*\x/sqrt(1+0.04*\x*\x)});
    \node[anchor=east] at (.98,2.60) {$\Sigma_{-r}$};
    \draw[line width=.45pt] (1.02,2.60)--(1.30,2.60);
    \node[anchor=west] at (2.22,2.84) {$\Sigma$};
    \draw[line width=.45pt] (1.96,2.84)--(2.18,2.84);
    \node[anchor=west] at (2.42,1.84) {$\Sigma_r$};
    \draw[line width=.45pt] (1.98,1.84)--(2.38,1.84);

    \node[fbblue,anchor=west] at (1.90,.82) {$K_r$};
    \fill[fbblue] (1.15,0) circle (1.35pt);
    \node[below=4pt] at (1.15,0) {$\partial\Sigma$};

    \draw[line width=.55pt] (1.15,0.18)--(1.33,0.18)--(1.33,0);

    \coordinate (q1) at
      ({1.15+0.10*1.25*1.25+0.45/sqrt(1+0.04*1.25*1.25)},
       {1.25-0.09*1.25/sqrt(1+0.04*1.25*1.25)});
    \coordinate (p1) at ({1.15+0.10*1.25*1.25},1.25);
    \draw[maparrow] (q1)--(p1)
      node[midway,above] {$\pi_r$};
    \coordinate (q2) at
      ({1.15+0.10*2.05*2.05-0.45/sqrt(1+0.04*2.05*2.05)},
       {2.05+0.09*2.05/sqrt(1+0.04*2.05*2.05)});
    \coordinate (p2) at ({1.15+0.10*2.05*2.05},2.05);
    \draw[maparrow] (q2)--(p2);

  \end{scope}
\end{tikzpicture}
\caption{The free-boundary adapted tubular neighborhood.}
\label{fig:tubular-retraction}
\end{figure}

Since $Z$ is normal to $\Sigma$, we may denote
\begin{equation}\label{eq: zeta}
Z|_\Sigma=\zeta\,\nu_\Sigma,
\qquad
\zeta:=\langle Z,\nu_\Sigma\rangle
=\frac{1}{|\nabla\tau|}>0.
\end{equation}
Consequently, if $u\in C^\infty(\Sigma)$ and \(\Gamma_{tu}:=\{F(x,tu(x)):x\in\Sigma\},\) then the normal velocity of the variation $t\mapsto\Gamma_{tu}$ at $t=0$ is \(uZ|_\Sigma=\zeta u\,\nu_\Sigma.\) Thus the graph height $u$ with respect to the tubular parametrization corresponds to the scalar normal variation $\zeta u$.

\begin{lemma}\label{Jestimate}
Under the free-boundary tubular parametrization constructed above, after decreasing \(r_0>0\) if necessary, there exists two positive constants \(C\) and $c$, independent of \(r\), such that for every \(0<r<r_0\), every \( y\in K_r, \) and every \(n\)-plane $S\subset T_yN$, one has\begin{enumerate}
\item \(J_n\pi_r(y,S)\le 1+Cr\).
\item $J_n\pi_r(y,S)\le 1+Cr-c\,\operatorname{dist}_{\mathcal G}\left(S,T_{\pi_r(y)}\Sigma\right)^2.$
\end{enumerate}
Where \( J_n\pi_r(y,S) \) denotes the \(n\)-dimensional Jacobian of \(D\pi_r(y)\) restricted to \(S\) and $\operatorname{dist}_{\mathcal G}$ denotes the grassmannian distance that we would explain specifically later in the proof.
\end{lemma}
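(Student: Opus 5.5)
We work in the product coordinates furnished by the diffeomorphism \(F\). Pull back \(g\) to \(\Sigma\times(-r_0,r_0)\) and write \(\tilde g:=F^*g\). In these coordinates \(\pi_r\) is simply the projection \((x,s)\mapsto x\). Along \(s=0\), the facts that \(Z\perp T\Sigma_s\) and \(Z|_\Sigma=\zeta\nu_\Sigma\) give
\[
\tilde g|_{s=0}=g_\Sigma+\zeta^{-2}\,ds^2 .
\]
Since \(Z\) is orthogonal to every leaf \(\Sigma_s\), in fact \(\tilde g=g_s+|\nabla\tau|^2\,ds^2\) for a smooth family \(g_s\) of metrics on \(\Sigma\) with \(g_0=g_\Sigma\). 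Smoothness and compactness of \(\Sigma\), including up to \(\partial\Sigma\) since \(F\) is smooth up to the boundary, then give
\[
\|g_s-g_\Sigma\|_{C^0}\le C|s| \quad\text{for } |s|\le r_0 .
\]

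\textbf{Step 1: the linear-algebra reduction.} Fix \(y=F(x,s)\) with \(|s|\le r\). Then \(T_yN=T_x\Sigma\oplus\mathbb R\partial_s\), and this splitting is \(\tilde g\)-orthogonal with horizontal metric \(g_s\). The differential \(D\pi_r(y)\) is the orthogonal projection \(P\) onto the horizontal factor, followed by the identity map \((T_x\Sigma,g_s)\to(T_x\Sigma,g_\Sigma)\). The latter has operator norm and Jacobian at most \(1+C|s|\le 1+Cr\). For an orthogonal projection \(P\) of an inner product space onto a hyperplane \(H\) with unit normal \(e\), and an \(n\)-plane \(S\), we have
\[
J_n(P|_S)=|\langle \nu_S,e\rangle| ,
\]
where \(\nu_S\) is the unit normal of \(S\). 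This is just the cosine of the angle between \(S\) and \(H\); one sees it by choosing a basis of \(S\) with \(n-1\) vectors in \(S\cap H\). Multiplying the two factors gives \(J_n\pi_r(y,S)\le (1+Cr)\cos\theta\), where \(\theta\in[0,\pi/2]\) is the angle between \(S\) and the horizontal plane \(H_y=D F(T_x\Sigma\times\{0\})\), measured in \(g\). Item 1 follows at once because \(\cos\theta\le1\).

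\textbf{Step 2: the quadratic gain.} Define \(\operatorname{dist}_{\mathcal G}(S,T_{\pi_r(y)}\Sigma)\) by transporting \(S\) to \(x\) via \(DF\) in the product chart, identifying \(T_yN\cong T_xN\) as in the splitting above. For hyperplanes it is natural to take this to be \(\sin\theta\), or the equivalent norm of the difference of orthogonal projections. Any two such choices are comparable, so the constants are only affected up to a fixed factor. Then
\[
\cos\theta=\sqrt{1-\sin^2\theta}\le 1-\tfrac12\sin^2\theta .
\]
The horizontal plane at \(y\) is \(T_y\Sigma_s\), which corresponds to \(T_x\Sigma\) with metrics differing by \(O(r)\). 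Hence the angle with respect to \(g\) at \(y\) and the angle with respect to the transported reference differ by \(O(r)\). The inequality \(\sin^2\theta'\le \sin^2\theta+Cr\) is enough here, since \(\sin\theta\le1\). Combining, we get
\[
J_n\pi_r\le(1+Cr)\bigl(1-\tfrac12\operatorname{dist}_{\mathcal G}^2+Cr\bigr)\le 1+C'r-c\,\operatorname{dist}_{\mathcal G}^2 ,
\]
with \(c=\tfrac14\), say, using \(\operatorname{dist}_{\mathcal G}\le1\).

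\textbf{Main obstacle.} The only delicate point is uniformity of all constants in \(y\), including near \(A_r\subset\partial N\), and making the Grassmannian comparison between \(y\) and \(\pi_r(y)\) precise. I would handle this by working entirely in the product chart \(\Sigma\times[-r_0,r_0]\), which is compact and in which the metric is smooth up to the boundary, because \(F\) comes from the flow of a smooth vector field tangent to \(\partial N\). All \(O(r)\) errors then follow from Lipschitz bounds of \(g_s\) and \(|\nabla\tau|\) in \(s\) on this compact set. No boundary-specific argument is needed beyond this.
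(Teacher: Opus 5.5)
Your argument is correct, but it is organized differently from the paper's.

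\textbf{The paper's route.} It argues perturbatively. It parallel-transports $T_yN$ to $T_xN$ along the flow line and shows $\|D\pi_r(y)\circ\mathcal P_{y\rightarrow x}^{-1}-\operatorname{pr}_{T_x\Sigma}\|\le C|t|$ by smoothness. Lipschitz continuity of $A\mapsto\Lambda^nA$ then gives $J_n\pi_r(y,S)\le\cos\theta+Cr$, an additive error, where $\theta$ is the angle between the transported plane and $T_x\Sigma$. Since $\operatorname{dist}_{\mathcal G}$ is defined through that same parallel transport, item 2 follows from $\cos\theta\le 1-\frac12\sin^2\theta$ with no further comparison.

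\textbf{Your route.} You use the orthogonality $Z\perp T\Sigma_s$ on every leaf to factor $D\pi_r(y)$ exactly: first the $g$-orthogonal projection onto $T_y\Sigma_s$, then the identification $(T_x\Sigma,g_s)\to(T_x\Sigma,g_\Sigma)$. This gives the multiplicative identity $J_n\pi_r(y,S)=J\cos\theta_{\mathrm{leaf}}$ with $J\le 1+Cr$. It is essentially the leaf-to-leaf device the paper only uses later for $P_r$ in Lemma~\ref{Jf}. Your version is more explicit, and it shows the quadratic deficit is exact relative to the leaf through $y$.

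\textbf{The cost of your route.} You need an extra step converting the angle with $T_y\Sigma_s$ into the angle with $T_{\pi_r(y)}\Sigma$, and you justify it only briefly. It does hold. The chart map $DF_{(x,0)}\circ DF_{(x,s)}^{-1}$ sends $T_y\Sigma_s$ onto $T_x\Sigma$ and is a $(1+Cr)$-almost isometry, so Grassmannian distances change by $O(r)$. Squaring costs only $O(r)$ because $\sin\theta\le 1$. You should state this almost-isometry explicitly.

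\textbf{Other differences.} The paper needs orthogonality of $Z$ only along $\Sigma$ itself, whereas you need it on every leaf; the choice $Z=\nabla\tau/|\nabla\tau|^2$ provides both. Your chart-based $\operatorname{dist}_{\mathcal G}$ differs from the paper's parallel-transport version by $O(r)$. Either is adequate for the later tilt-excess and graphicality arguments.

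\textbf{A harmless slip.} Since $|Z|=|\nabla\tau|^{-1}=\zeta$, the pulled-back metric is $g_s+|\nabla\tau|^{-2}\,ds^2$, which equals $g_\Sigma+\zeta^2\,ds^2$ at $s=0$. You wrote $|\nabla\tau|^{2}$ and $\zeta^{-2}$ instead. Only the orthogonality of the splitting enters your argument, so nothing changes.
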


\begin{proof}
Write
\[
y=F(x,t),
\qquad
x\in\Sigma,
\qquad
|t|\le r.
\]
We first prove that \(D\pi_r(y)\) is uniformly close to the orthogonal projection onto \( T_x\Sigma. \)

Let
\[
\mathcal P_{y\rightarrow x}:T_yN\to T_xN
\]
denote parallel transport along the curve
\[
s\mapsto F(x,(1-s)t),
\qquad 0\le s\le1.
\]
Define
\[
\mathcal A_{x,t}:=D\pi_r(F(x,t))\circ \mathcal P_{y\rightarrow x}^{-1}:T_xN\to T_x\Sigma.
\]
Since \(F\) and \(\pi_r\) are smooth and \(\Sigma\) is compact, the map \( (x,t)\mapsto \mathcal A_{x,t} \) is smooth for \(|t|<r_0\). At \(t=0\), we have \(F(x,0)=x\), and by definition \( \pi_r(F(x,t))=x. \) Hence
\[
D\pi_r(x)|_{T_x\Sigma}=\operatorname{id}_{T_x\Sigma},
\qquad
D\pi_r(x)(\nu_\Sigma(x))=0.
\]
Therefore \( \mathcal A_{x,0}=\operatorname{pr}_{T_x\Sigma}, \) where \(\operatorname{pr}_{T_x\Sigma}:T_xN\to T_x\Sigma\) is the orthogonal projection.

By smoothness and compactness, there is a constant \(C_1>0\) such that
\[
\|\mathcal A_{x,t}-\operatorname{pr}_{T_x\Sigma}\|\le C_1|t|
\]
for all \(x\in\Sigma\) and all \(|t|<r_0\). In particular, since \(|t|\le r\), \( \|\mathcal A_{x,t}-\operatorname{pr}_{T_x\Sigma}\|\le C_1r. \)

Let \(S\subset T_yN\) be an \(n\)-plane and set \( \widetilde S:=\mathcal P_{y\rightarrow x}(S)\subset T_xN. \) Since parallel transport is an isometry, \( J_n\pi_r(y,S) = J_n(\mathcal A_{x,t},\widetilde S). \) Let \(\xi\) be a unit simple \(n\)-vector spanning \(\widetilde S\). Then
\[
J_n(\mathcal A_{x,t},\widetilde S)=
|\Lambda^n \mathcal A_{x,t}(\xi)|.
\]
The map \( \mathcal A\mapsto \Lambda^n \mathcal A \) is locally Lipschitz on bounded subsets of \(\operatorname{Hom}(T_xN,T_x\Sigma)\). Since \(\mathcal A_{x,t}\) and \(\operatorname{pr}_{T_x\Sigma}\) are uniformly bounded, there exists \(C_2>0\) such that
\[
|\Lambda^n \mathcal A_{x,t}(\xi)|\le
|\Lambda^n \operatorname{pr}_{T_x\Sigma}(\xi)|+
C_2\|\mathcal A_{x,t}-\operatorname{pr}_{T_x\Sigma}\|.
\]
Thus
\[
J_n\pi_r(y,S)\le
J_n(\operatorname{pr}_{T_x\Sigma},\widetilde S)+C_2C_1r.
\]
Since \(\operatorname{pr}_{T_x\Sigma}\) is an orthogonal projection, it is \(1\)-Lipschitz, and hence \( J_n(\operatorname{pr}_{T_x\Sigma},\widetilde S)\le1. \) Therefore we have \( J_n\pi_r(y,S)\le 1+Cr \) for some constant \(C>0\), independent of \(r\). This concludes 1.

To conclude 2, we first introduce the notation of grassmannian distance:\[ \operatorname{dist}_{\mathcal G} \left( S, T_x\Sigma \right) := \left\| \operatorname{pr}_{\widetilde S} - \operatorname{pr}_{T_x\Sigma} \right\|_{\operatorname{HS}},
\]
here \(\operatorname{pr}_{\widetilde S_y}\) and \(\operatorname{pr}_{T_x\Sigma}\) are the orthogonal projections onto \(\widetilde S\) and \( T_x\Sigma\) inside \(T_xN\), and \(\|\cdot\|_{\operatorname{HS}}\) denotes the Hilbert-Schmidt norm.

Let us introduce an elementary linear algebra estimate\[ J_n(\operatorname{pr}_{T_x\Sigma},\widetilde S)\le 1-c_0\operatorname{dist}_{\mathcal G}\left(\widetilde S,T_x\Sigma\right)^2
\]
and briefly justify it. Since both \(\widetilde S\) and \(T_x\Sigma\) are \(n\)-planes in the \((n+1)\)-dimensional inner product space \(T_xN\), there is only one non-zero principal angle between them. Let this angle be \(\theta\in[0,\pi/2]\). Then \( J_n(\operatorname{pr}_{T_x\Sigma},\widetilde S) = \cos\theta. \) Moreover,
\[
\operatorname{dist}_{\mathcal G}
\left(
\widetilde S,T_x\Sigma
\right)^2
\asymp
\sin^2\theta.
\]
Since \( 1-\cos\theta = \frac{\sin^2\theta}{1+\cos\theta} \ge \frac12\sin^2\theta, \) we get \( \cos\theta \le 1-c_0 \operatorname{dist}_{\mathcal G} \left( \widetilde S,T_x\Sigma \right)^2 \) for some constant \(c_0>0\). This proves the linear algebra estimate.

Combining the previous estimates gives
\[
J_n\pi_r(y,S)\le
1+Cr-c_0
\operatorname{dist}_{\mathcal G}
\left(
\widetilde S,T_x\Sigma
\right)^2.
\]
By the definition of the Grassmannian distance between \(S\subset T_yN\) and \(T_x\Sigma\subset T_xN\), this is exactly
\[
J_n\pi_r(y,S)\le
1+Cr-c
\operatorname{dist}_{\mathcal G}
\left(
S,T_{\pi_r(y)}\Sigma
\right)^2
\]
as desired.
\end{proof}

Finally, for later use, define the retraction $P_r:F(\Sigma\times(-r_0,r_0))\to K_r$ by $P_r(F(x,s)):=F(x,q_r(s)),$ where
\[
q_r(s):=
\begin{cases}
-r, & s<-r,\\
s, & -r\le s\le r,\\
r, & s>r.
\end{cases}
\]
Then
\[
P_r|_{K_r}=\operatorname{id}_{K_r},\qquad P_r(\partial N\cap \mathcal U)\subset \partial N.
\]
Although \(P_r\) is generally not smooth along the hypersurfaces \(\{s=\pm r\}\), it is Lipschitz. Thus it induces push-forwards on currents and varifolds. Also note that $q_r$ is 1-Lipschitz and consequently \(\operatorname{Lip}(P_r)\leq C,\) where $C=C(F)$ is independent of $r.$

Moreover, by Rademacher's theorem, for every countably \(n\)-rectifiable set \(M\subset \mathcal U\), the restriction \(P_r|_M\) is approximately tangentially differentiable at \(\mathcal H^n\)-almost every point of \(M\). At such a point \(z\), we shall use the notation \( D P_r(z)\big|_{\operatorname{Tan}(M,z)} \) for the approximate tangential differential \( \operatorname{ap}D(P_r|_M)(z). \) The same convention will be used for compositions of \(P_r\) with smooth maps.


\section{Proof of Theorem \ref{main}}\label{3}

In this section, we assume that \(\Sigma\) is strictly stable for area functional, and denote $\llbracket \Sigma \rrbracket$ the mod 2 current induced by $\Sigma$.

We will divide the full proof of Theorem~\ref{main} into several subsections. First, we restate the theorem with notations in Section \ref{3.1}:
\begin{theorem}\label{main1precise}
Let \((N^{n+1},g)\) be a compact smooth Riemannian manifold with boundary. Let \(\Sigma^n\subset N \) be a compact, two-sided, properly embedded free-boundary minimal hypersurface. Assume that \(\Sigma\) is strictly stable for area functional, then there exist \(r_0>0\) and a free-boundary adapted tubular parametrization
\[
F:\Sigma\times (-r_0,r_0)\to N\qquad\text{with }F(x,0)=x,
\qquad
F(\partial\Sigma\times (-r_0,r_0))\subset \partial N,
\]
such that the following holds: for \(0<r<r_0\), denote \( K_r:=F(\Sigma\times[-r,r]),\) \( A_r:=K_r\cap \partial N. \) If \(T\in \mathcal Z_n(K_r,A_r;\mathbb Z_2) \) satisfies \( [T]=[\Sigma]\in H_n(K_r,A_r;\mathbb Z_2), \) then
\[
\mathbf M(T)\ge \mathbf M(\Sigma).
\]
Moreover, equality holds if and only if \( T=\llbracket\Sigma\rrbracket \) as a relative \(\mathbb Z_2\)-cycle in \((K_r,A_r)\).
\end{theorem}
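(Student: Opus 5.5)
The plan is to argue by contradiction through the direct method, following the outline in the introduction. Suppose that for a sequence \(r_j\to0\) there is \(T_j\in\mathcal Z_n(K_{r_j},A_{r_j};\mathbb Z_2)\), homologous to \(\Sigma\), with either \(\mathbf M(T_j)<\mathbf M(\Sigma)\), or \(\mathbf M(T_j)=\mathbf M(\Sigma)\) and \(T_j\neq\llbracket\Sigma\rrbracket\). By compactness of relative cycles with bounded mass in the fixed compact pair \((K_{r_j},A_{r_j})\), we replace \(T_j\) by a mass minimizer \(S_j\) in its relative homology class. Then \(\mathbf M(S_j)\le\mathbf M(\Sigma)\), and it suffices to show that \(S_j=\llbracket\Sigma\rrbracket\) for large \(j\); in the equality case this must also be compared with \(T_j\), as discussed at the end.

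\emph{Convergence.} Pushing forward by \(\pi_{r_j}\) preserves the relative class, and \(\pi_{r_j\#}S_j=\llbracket\Sigma\rrbracket\) because the \(\mathbb Z_2\) degree of the map is one. Lemma~\ref{Jestimate}(1) gives \(\mathbf M(\Sigma)\le(1+Cr_j)\mathbf M(S_j)\), so \(\mathbf M(S_j)\to\mathbf M(\Sigma)\). Lemma~\ref{Jestimate}(2) then gives
\[
c\int\operatorname{dist}_{\mathcal G}(T_yS_j,T_{\pi(y)}\Sigma)^2\,d\|S_j\|\le Cr_j\,\mathbf M(S_j)+\mathbf M(S_j)-\mathbf M(\Sigma)\to0.
\]
Since the supports collapse onto \(\Sigma\), it follows that \(|S_j|\to|\Sigma|\) as varifolds and \(S_j\to\llbracket\Sigma\rrbracket\) in flat norm.

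\emph{First variation bound.} For \(X\in\mathfrak X^\partial(N)\), compare \(S_j\) with \((P_{r_j}\circ\psi_t)_\#S_j\), which remains in the class, and differentiate. The retraction \(P_r\) is not smooth, but it reduces the area of anything pushed outside \(K_r\) up to a factor \(1+O(r)\), using the mean curvature of the leaves \(\Sigma_{\pm r}\), which is \(O(r)\). This should yield a free-boundary generalized mean curvature for \(S_j\) supported on \(\partial K_{r_j}\setminus\partial N\), with \(|H|\le C\) (the foliation has bounded mean curvature), so \(H\in L^\infty\). I expect this to be the main technical obstacle: making the one-sided variation against the barrier leaves rigorous, and checking that the leaves \(\Sigma_{\pm r}\) meet \(\partial N\) in a way compatible with the free-boundary condition.

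\emph{Regularity and stability.} With density ratios close to \(1\) (interior) or \(1/2\) (at \(\partial N\)), which follow from varifold convergence and monotonicity, and with \(L^p\)-small mean curvature after scaling, Theorems~\ref{thm:allard-interior} and~\ref{G} together with Remark~\ref{rem:Riemannian-regularity} show that \(S_j\) is a \(C^{1,\alpha}\) graph \(\Gamma_{u_j}=\{F(x,u_j(x))\}\) with \(\|u_j\|_{C^{1,\alpha}}\to0\) and multiplicity one. Expanding the area of the graph to second order, and using \(\delta\Sigma=0\) with the height-to-normal-speed factor \(\zeta\) of \eqref{eq: zeta}, gives
\[
\mathbf M(\Gamma_u)=\mathbf M(\Sigma)+\tfrac12Q(\zeta u,\zeta u)+o(\|u\|_{H^1}^2)\ge\mathbf M(\Sigma)+c\|u\|_{H^1}^2.
\]
This holds for small \(\|u\|_{C^1}\), by strict stability and coercivity. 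Minimality then forces \(u_j=0\), so \(S_j=\llbracket\Sigma\rrbracket\), and hence \(\mathbf M(T_j)\ge\mathbf M(\Sigma)\).

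\emph{Equality case.} If \(\mathbf M(T_j)=\mathbf M(\Sigma)\), then \(T_j\) is itself a minimizer, and the same argument applies directly with \(S_j=T_j\) in place of the replaced minimizer. This forces \(T_j=\llbracket\Sigma\rrbracket\), a contradiction. Finally, the argument is uniform in \(r\), which justifies fixing a single \(r_0\) in the statement.
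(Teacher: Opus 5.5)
Your proposal is correct and follows essentially the same route as the paper. Both take relative minimizers in $(K_r,A_r)$, use $\pi_{r\#}S=\llbracket\Sigma\rrbracket$ with the two Jacobian estimates to get mass, tilt-excess and varifold convergence, derive a uniform first-variation bound from the competitors $(P_r\circ\psi_t)_\#S$, apply Allard/Gr\"uter--Jost regularity to obtain a single graph, and use coercivity of $Q$ to force $u\equiv0$.

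The step you flag as the main obstacle is exactly the paper's Lemma~\ref{Jf}. There $Z=\nabla\tau/|\nabla\tau|^2$ is tangent to $\partial N$ and orthogonal to the leaves, so $P_r$ preserves the pair and satisfies $J_n(DP_r(z)|_S)\le 1+C\,d_\tau(z,K_r)$; the paper gets this from Gronwall on the leaf metrics, and more sharply it follows from the bounded mean curvature of the leaves, as you suggest. What gives $|\delta|S|(X)|\le C\int|X|\,d\|S\|$ is this linear dependence on the displacement $d_\tau(\psi_t(y),K_r)=O(|t||X(y)|)$, not a bare $1+O(r)$ area factor.
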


\begin{remark}
  If \(\Sigma\) is orientable, the same theorem holds with coefficient \(\mathbb Z\). 
\end{remark}

For each small \(r\), we first choose a relative mass minimizer in the class of \(\llbracket\Sigma\rrbracket\) in each previous neighborhood. The main task is to show that these minimizers converge back to \(\Sigma\) with multiplicity one and, after applying regularity theorems, are actually graphs over \(\Sigma\). The strict stability of \(\Sigma\) then rules out every nontrivial graph.

\subsection{Convergence as \texorpdfstring{$\mathbb{Z}_2$}{Z2}-Currents and Varifolds}\label{3.2}
The goal of this subsection is to prove that these minimizers converge back to \(\llbracket\Sigma\rrbracket\) as relative currents and to \(|\Sigma|\) as varifolds. We consider the class
\[
\mathcal C_r:=
\left\{
T\in \mathcal Z_n(K_r,A_r;\mathbb Z_2):
[T]=[\Sigma]\ \text{in } H_n(K_r,A_r;\mathbb Z_2)
\right\}.
\]
Define \( m_r:=\inf\{\mathbf M(T):T\in\mathcal C_r\}. \) By compactness of relative currents modulo \(2\) \cite[Theorem 2.3]{WEYLLAW}, there exists \( T_r\in\mathcal C_r \) such that \( \mathbf M(T_r)=m_r. \) Equivalently,
\[
\mathbf M(T_r)=
\inf
\left\{
\mathbf M(T):
T\in \mathcal Z_n(K_r,A_r;\mathbb Z_2),
\ [T]=[\Sigma]\in H_n(K_r,A_r;\mathbb Z_2)
\right\}.
\]
We call \(T_r\) a mass-minimizing relative mod \(2\) representative of \([\Sigma]\) in \((K_r,A_r)\), and will show that:\begin{enumerate}
\item $T_r\rightarrow \llbracket \Sigma \rrbracket$ as mod 2 currents, where $\llbracket \Sigma \rrbracket$ is the mod 2 current induced by $\Sigma$.
\item $V_r:=|T_r|\rightarrow |\Sigma|$ as varifolds, where $|\Sigma|$ is the varifold induced by $\Sigma$. Moreover, the varifold convergence has multiplicity 1.
\end{enumerate}

The key point is that the relative homology constraint is strong enough to control the projection of \(T_r\) onto \(\Sigma\). This will give the lower mass bound needed to force convergence. First we show that $(\pi_r)_{\#} T_r=\llbracket \Sigma \rrbracket.$

\begin{lemma}\label{Sigma=pirTr}
With $\Sigma,\ A_r,\ K_r,\ \pi_r$ defined as in Section \ref{3.1}, let \( T_r\in \mathcal Z_n(K_r,A_r;\mathbb Z_2) \) satisfy
\[
[T_r]=[\llbracket\Sigma\rrbracket]
\quad\text{in}\quad
H_n(K_r,A_r;\mathbb Z_2).
\]
Then we have \( (\pi_r)_{\#}T_r=\llbracket\Sigma\rrbracket \) as \(\mathbb Z_2\)-currents.
\end{lemma}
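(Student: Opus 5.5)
Write the homology relation as $T_r-\llbracket\Sigma\rrbracket=\partial Q+R$ with $Q\in\mathcal R_{n+1}(K_r;\mathbb Z_2)$ and $R\in\mathcal R_n(A_r;\mathbb Z_2)$. Then push forward by the smooth map $\pi_r:K_r\to\Sigma$. Since $\pi_r$ restricted to $\Sigma$ is the identity, $(\pi_r)_\#\llbracket\Sigma\rrbracket=\llbracket\Sigma\rrbracket$. Push-forward commutes with boundary, so
\[
(\pi_r)_\#T_r-\llbracket\Sigma\rrbracket=\partial\,(\pi_r)_\#Q+(\pi_r)_\#R .
\]
The remaining task is to show that both terms on the right vanish for dimensional reasons.

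For $Q$: the current $(\pi_r)_\#Q$ is an $(n+1)$-dimensional rectifiable current supported in the $n$-dimensional manifold $\Sigma$. Its mass is bounded by $\int J_{n+1}\pi_r\,d\|Q\|$, and $J_{n+1}\pi_r\equiv0$ because $D\pi_r$ has rank $n$ everywhere. Hence $(\pi_r)_\#Q=0$ and its boundary vanishes.

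For $R$: the set $A_r=F(\partial\Sigma\times[-r,r])$ and $\pi_r(A_r)=\partial\Sigma$, using $F(x,s)\in\partial N$ iff $x\in\partial\Sigma$, which holds by properness and the fact that the flow of $Z$ preserves $\partial N$ (also in backward time). Thus $(\pi_r)_\#R$ is an $n$-dimensional rectifiable current supported in the $(n-1)$-dimensional set $\partial\Sigma$. Equivalently, $\pi_r|_{A_r}$ factors through $\partial\Sigma$, so $J_n(\pi_r|_{A_r})=0$ $\mathcal H^n$-a.e. on $A_r$. Hence $(\pi_r)_\#R=0$. Combining the two gives $(\pi_r)_\#T_r=\llbracket\Sigma\rrbracket$.

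The main subtlety is that the identity is stated for the canonical representative of the relative cycle, so the conclusion must not depend on the choice of representative. This is exactly what the second step shows: any ambiguity in $\mathcal R_n(A_r)$ is killed by $\pi_r$. One point to check is the precise meaning of the push-forward for $\mathbb Z_2$ rectifiable currents (flat chains mod 2). I would use the standard facts that $f_\#$ commutes with $\partial$ for Lipschitz $f$, and that a rectifiable $k$-current supported in a set of $\mathcal H^k$-measure zero vanishes, as in Federer 4.2.26 and 4.1.31. Noting that $\pi_r$ is smooth on $K_r$, a compact manifold with corners, these apply directly.
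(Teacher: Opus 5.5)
Your proof is correct, and it finishes differently from the paper's. The setup is the same: write $T_r-\llbracket\Sigma\rrbracket=\partial Q_r+R_r$, push forward by $\pi_r$, and use $\pi_r|_\Sigma=\mathrm{id}$ and $\pi_r(A_r)\subset\partial\Sigma$.

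The paper only notes that $(\pi_r)_\#R_r$ is supported in $\partial\Sigma$. It deduces $[(\pi_r)_\#T_r]=[\llbracket\Sigma\rrbracket]$ in $H_n(\Sigma,\partial\Sigma;\mathbb Z_2)$, invokes the constancy theorem to write $(\pi_r)_\#T_r=\sum_\alpha c_\alpha\llbracket\Sigma_\alpha\rrbracket$, and reads off $c_\alpha=1$ from the homology class, component by component.

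You instead kill both error terms at the chain level by counting dimensions. $(\pi_r)_\#Q_r$ is an $(n+1)$-dimensional rectifiable current carried by the $n$-manifold $\Sigma$. $(\pi_r)_\#R_r$ is an $n$-dimensional rectifiable current carried by the $\mathcal H^n$-null set $\partial\Sigma$. Both therefore vanish, and the identity follows at once.

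What your route buys:
\begin{itemize}
\item It is more elementary: no constancy theorem and no bookkeeping over components.
\item It shows explicitly that the conclusion does not depend on the representative of the relative cycle; the paper sidesteps this with its canonical-representative convention.
\item It carries over to $\mathbb Z$ coefficients without change.
\item You also justify $A_r=F(\partial\Sigma\times[-r,r])$, via backward invariance of $\partial N$ under the flow plus properness, which the paper only asserts.
\end{itemize}

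What the paper's route buys: it uses only two facts, that $(\pi_r)_\#T_r$ is a relative cycle in $(\Sigma,\partial\Sigma)$ and that it has the right homology class. So it would still work if the homological information came from an argument that does not produce an explicit chain-level relation. The cost is the extra machinery.
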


\begin{proof}
Since \( [T_r]=[\llbracket\Sigma\rrbracket]\) in \( H_n(K_r,A_r;\mathbb Z_2), \) there exist \( Q_r\in \mathcal I_{n+1}(K_r;\mathbb Z_2)\) and \(R_r\in \mathcal I_n(A_r;\mathbb Z_2) \) such that \( T_r-\llbracket\Sigma\rrbracket = \partial Q_r+R_r. \) Here and below the sign is irrelevant over \(\mathbb Z_2\).

Applying \((\pi_r)_{\#}\) to both sides gives
\[
(\pi_r)_{\#}T_r-(\pi_r)_{\#}\llbracket\Sigma\rrbracket=
\partial(\pi_r)_{\#}Q_r+(\pi_r)_{\#}R_r.
\]
Since \( \pi_r|_{\Sigma}=\operatorname{id}_{\Sigma}, \) we have \( (\pi_r)_{\#}\llbracket\Sigma\rrbracket = \llbracket\Sigma\rrbracket. \) Moreover, since \( \operatorname{spt}R_r\subset A_r \) and \( \pi_r(A_r)\subset \partial\Sigma, \) we have \( \operatorname{spt}(\pi_r)_{\#}R_r\subset\partial\Sigma. \) Thus
\[
(\pi_r)_{\#}T_r-\llbracket\Sigma\rrbracket
=
\partial(\pi_r)_{\#}Q_r+(\pi_r)_{\#}R_r,
\]
with the last term supported in \(\partial\Sigma\). Hence \( [(\pi_r)_{\#}T_r] = [\llbracket\Sigma\rrbracket]\) in \( H_n(\Sigma,\partial\Sigma;\mathbb Z_2). \)

Now \((\pi_r)_{\#}T_r\) is an \(n\)-dimensional relative cycle in \((\Sigma,\partial\Sigma)\). Since it is supported in the \(n\)-manifold \(\Sigma\), the constancy theorem implies that
\[
(\pi_r)_{\#}T_r=
\sum_{\alpha} c_{\alpha}\llbracket\Sigma_{\alpha}\rrbracket,
\qquad
c_{\alpha}\in\mathbb Z_2,
\]
where \(\{\Sigma_{\alpha}\}\) are possibly connected components of \(\Sigma\). The equality \( [(\pi_r)_{\#}T_r] = [\llbracket\Sigma\rrbracket]\) forces \( c_{\alpha}=1 \) for every component \(\Sigma_{\alpha}\). Therefore \( (\pi_r)_{\#}T_r = \sum_{\alpha}\llbracket\Sigma_{\alpha}\rrbracket = \llbracket\Sigma\rrbracket \) as desired.
\end{proof}

Lemma \ref{Sigma=pirTr} shows that projection does not lose the homology class, although \(T_r\) may be singular or non-graphical, its projection still has exactly the same mod \(2\) multiplicity as \(\Sigma\).
With Lemmas \ref{Jestimate} and \ref{Sigma=pirTr}, it suffices to show the mass convergence.

\begin{lemma}\label{massconvergence}
Let \(T_r\) be a relative mass minimizer of class \(\llbracket\Sigma\rrbracket\) in \(K_r\), Then
\[
\mathbf M(T_r)\to \mathbf M(\llbracket\Sigma\rrbracket)=
\mathcal H^n(\Sigma)
\]
as \(r\to0\). More precisely, there exists \(C>0\) independent of \(r\) such that
\[
\frac{1}{1+Cr}\mathbf M(\llbracket\Sigma\rrbracket)\le
\mathbf M(T_r)\le
\mathbf M(\llbracket\Sigma\rrbracket).
\]
\end{lemma}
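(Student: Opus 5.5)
The upper bound and the lower bound are handled separately. For the upper bound I observe that \(\llbracket\Sigma\rrbracket\) itself belongs to \(\mathcal C_r\). It is supported in \(K_r\), its boundary \(\partial\Sigma\) lies in \(A_r\), and it trivially represents \([\Sigma]\). Since \(T_r\) minimizes mass in \(\mathcal C_r\), we get \(\mathbf M(T_r)=m_r\le \mathbf M(\llbracket\Sigma\rrbracket)=\mathcal H^n(\Sigma)\).

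For the lower bound I push \(T_r\) forward by the tubular projection \(\pi_r\) and combine Lemma~\ref{Sigma=pirTr} with the Jacobian bound of Lemma~\ref{Jestimate}. Write \(T_r\) via its canonical representative, a rectifiable mod \(2\) current carried by a countably \(n\)-rectifiable set \(M\subset K_r\) with approximate tangent planes \(\operatorname{Tan}(M,y)\) for \(\mathcal H^n\)-a.e.\ \(y\in M\). Since \(\pi_r\) is smooth on \(K_r\), the area formula gives
\[
\mathbf M\bigl((\pi_r)_{\#}T_r\bigr)\le \int_M J_n\pi_r\bigl(y,\operatorname{Tan}(M,y)\bigr)\,d\mathcal H^n(y).
\]
The inequality comes from the fact that mod \(2\) multiplicities of overlapping sheets can only cancel, never add beyond the sum. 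Lemma~\ref{Jestimate}(1) bounds the integrand by \(1+Cr\) uniformly, so the right-hand side is at most \((1+Cr)\mathbf M(T_r)\). By Lemma~\ref{Sigma=pirTr}, \((\pi_r)_{\#}T_r=\llbracket\Sigma\rrbracket\), hence \(\mathcal H^n(\Sigma)\le(1+Cr)\mathbf M(T_r)\), which is the claimed lower bound. Letting \(r\to0\) squeezes \(\mathbf M(T_r)\) to \(\mathcal H^n(\Sigma)\).

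The only points needing care are technical; the main one concerns representatives. The hypothesis of Lemma~\ref{Sigma=pirTr} is only a relative homology statement, so I have to make sure the identity \((\pi_r)_{\#}T_r=\llbracket\Sigma\rrbracket\) is applied to the canonical representative, the one with \(T_r\llcorner A_r=0\), and that its mass is \(\mathbf M(T_r)\).

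If \((\pi_r)_{\#}\) were instead applied to a representative differing by a current \(R\) in \(A_r\), the image would differ only by \((\pi_r)_{\#}R\). That current is \(n\)-dimensional and supported in the \((n-1)\)-dimensional set \(\partial\Sigma\), so it vanishes. Hence the choice of representative is harmless.

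I also have to confirm that the constant \(C\) from Lemma~\ref{Jestimate} is independent of \(r\). The lemma provides exactly this, so the estimate closes without further work.
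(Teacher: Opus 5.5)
Your proposal is correct and follows the paper's proof essentially verbatim. The upper bound comes from \(\llbracket\Sigma\rrbracket\) being an admissible competitor, and the lower bound from \((\pi_r)_{\#}T_r=\llbracket\Sigma\rrbracket\) (Lemma~\ref{Sigma=pirTr}) combined with the push-forward mass estimate and the bound \(J_n\pi_r\le 1+Cr\) of Lemma~\ref{Jestimate}; your remark that \((\pi_r)_{\#}R=0\) for \(R\) supported in \(A_r\) is valid bookkeeping that the paper leaves implicit.
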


\begin{proof}
Since \(T_r\) minimizes mass in its relative homology class and since \(\llbracket\Sigma\rrbracket \) itself is an admissible competitor, we immediately have
\begin{equation}\label{eq: TrleSigma}
  \mathbf M(T_r)\le \mathbf M(\llbracket\Sigma\rrbracket)
\end{equation}

We now prove the reverse inequality up to a ratio \(1+Cr\). Since \( (\pi_r)_{\#}T_r=\llbracket\Sigma\rrbracket, \) we have \( \mathbf M(\llbracket\Sigma\rrbracket) = \mathbf M((\pi_r)_{\#}T_r). \) By the mass estimate for push-forwards of rectifiable currents,
\[
\mathbf M((\pi_r)_{\#}T_r)\le
\int J_n\pi_r\bigl(y,\operatorname{Tan}(T_r,y)\bigr)\,d\|T_r\|(y),
\]
where \(\operatorname{Tan}(T_r,y)\) denotes the approximate tangent \(n\)-plane of \(T_r\) at \(y\), defined for \(\|T_r\|\)-almost every \(y\). By the Jacobian estimate from Lemma \ref{Jestimate}, \( J_n\pi_r\bigl(y,\operatorname{Tan}(T_r,y)\bigr)\le 1+Cr. \) Therefore
\begin{equation}\label{eq: TrgesSigma}
  \mathbf M(\llbracket\Sigma\rrbracket)\le
(1+Cr)\int d\|T_r\|=
(1+Cr)\mathbf M(T_r).
\end{equation}

Combining \eqref{eq: TrleSigma}, \eqref{eq: TrgesSigma} gives
\[
\frac{1}{1+Cr}\mathbf M(\llbracket\Sigma\rrbracket)\le
\mathbf M(T_r)\le
\mathbf M(\llbracket\Sigma\rrbracket).
\]
Letting \(r\to0\), we obtain \( \mathbf M(T_r)\to \mathbf M(\llbracket\Sigma\rrbracket). \)
\end{proof}

Hence by above lemma, any relative minimizer has almost the same mass as \(\Sigma\) when the tubular neighborhood is sufficiently thin.

\subsubsection*{Convergence as \texorpdfstring{$\mathbb{Z}_2$}{Z2}-Currents}

We first record the convergence in the relative flat topology. The proof uses the deformation retraction from \(K_r\) to \(\Sigma\) along the flow lines of \(Z\). Since this homotopy preserves \(A_r\), the homotopy formula gives a relative filling of \(T_r-\llbracket\Sigma\rrbracket\). The filling has small mass because the homotopy moves points by distance \(O(r)\).

\begin{lemma}\label{flatconvergence}
With above definitions, we have \(T_r\to \llbracket\Sigma\rrbracket \) in the relative flat topology of the pair \((K_r,A_r)\). Quantitatively, there exists \(C>0\), independent of \(r\), such that
\[
\mathcal F_{\mathrm{rel}}(T_r-\llbracket\Sigma\rrbracket)\le
Cr\,\mathbf M(T_r)\to0 \qquad\text{as }r\to0.
\]
\end{lemma}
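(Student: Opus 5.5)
The plan is to use the straight-line homotopy to $\Sigma$ along the flow lines of $Z$, namely
\[
h:[0,1]\times K_r\to K_r,\qquad h(\lambda,F(x,s)):=F(x,(1-\lambda)s).
\]
We have $h(0,\cdot)=\operatorname{id}_{K_r}$ and $h(1,\cdot)=\pi_r$, where $\pi_r$ is regarded as a map into $\Sigma\subset K_r$. Moreover $h(\lambda,A_r)\subset A_r$ for every $\lambda$, since $F(\partial\Sigma\times[-r,r])=A_r$. Also $h$ maps $K_r$ into $K_r$, because $|(1-\lambda)s|\le r$. The homotopy formula for rectifiable $\mathbb Z_2$-currents then gives
\[
(\pi_r)_{\#}T_r-T_r=\partial\, h_{\#}(\llbracket0,1\rrbracket\times T_r)+h_{\#}(\llbracket0,1\rrbracket\times\partial T_r).
\]
By Lemma \ref{Sigma=pirTr}, the left-hand side equals $\llbracket\Sigma\rrbracket-T_r$. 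Set $Q:=h_{\#}(\llbracket0,1\rrbracket\times T_r)\in\mathcal R_{n+1}(K_r;\mathbb Z_2)$.

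The boundary term is handled next. By definition of $Z_n(K_r,A_r)$, the support of $\partial T_r$ lies in $A_r$, so $h_{\#}(\llbracket0,1\rrbracket\times\partial T_r)$ is supported in $h([0,1]\times A_r)\subset A_r$. This term therefore plays the role of $R$ in the definition of $\mathcal F_{\mathrm{rel}}$. One technical point needs care: $\partial T_r$ may only be a flat chain rather than rectifiable with finite mass. I will avoid this by working at the level of the relative flat norm. One can either take $T_r$ integral, which is the situation in the compactness theorem cited for minimizers, or apply the homotopy formula in its flat-chain form, where the term $h_{\#}(\llbracket0,1\rrbracket\times\partial T_r)$ is an $n$-dimensional flat chain supported in $A_r$. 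Since $T_r-\llbracket\Sigma\rrbracket$ and $\partial Q$ are rectifiable, this term is rectifiable as well, and so it is an admissible $R$. Consequently
\[
\mathcal F_{\mathrm{rel}}(T_r-\llbracket\Sigma\rrbracket)\le \mathbf M(Q).
\]

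The main estimate is the mass bound for $Q$. The standard homotopy mass estimate gives
\[
\mathbf M(Q)\le \sup_{y\in \operatorname{spt}T_r}|\partial_\lambda h(\lambda,y)|\cdot\sup\bigl(\operatorname{Lip}\,h(\lambda,\cdot)\bigr)^n\,\mathbf M(T_r).
\]
Here $\partial_\lambda h(\lambda,F(x,s))=-s\,Z(F(x,(1-\lambda)s))$, so this velocity has size at most $r\sup|Z|\le Cr$. The maps $h(\lambda,\cdot)=F\circ(\operatorname{id}\times(1-\lambda))\circ F^{-1}$ have Lipschitz constant bounded by a constant depending only on $F$ on $\Sigma\times(-r_0,r_0)$, uniformly in $\lambda$ and $r$. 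Together these give $\mathbf M(Q)\le Cr\,\mathbf M(T_r)$. Finally, Lemma \ref{massconvergence} gives $\mathbf M(T_r)\le\mathcal H^n(\Sigma)$, so the right-hand side tends to $0$ as $r\to0$.

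The only delicate step is the bookkeeping of the boundary term: one must confirm that the homotopy preserves $A_r$ and that the resulting chain qualifies as an admissible $R\in\mathcal R_n(A_r;\mathbb Z_2)$. Everything else is the routine homotopy estimate.
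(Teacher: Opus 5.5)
Your proposal is correct and follows the paper's proof essentially verbatim. You use the same retraction homotopy $F(x,t)\mapsto F(x,(1-s)t)$ preserving $A_r$, the same homotopy formula combined with Lemma~\ref{Sigma=pirTr} to write $T_r-\llbracket\Sigma\rrbracket=\partial S_r+R_r$ with $\operatorname{spt}R_r\subset A_r$, and the same mass bound from $|\partial_s H|\le Cr$ with uniformly bounded spatial derivatives. One small point: your argument that the boundary term is rectifiable is circular, since rectifiability of $\partial Q$ is equivalent to that of $h_{\#}(\llbracket0,1\rrbracket\times\partial T_r)$. However, your alternative of taking $T_r$ integral settles this, and the paper does not address the point at all.
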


\begin{proof}
Define a deformation retraction by\begin{equation}\label{eq: homotopy retraction-flatconv} H:[0,1]\times K_r\to K_r,\qquad (s,F(x,t))\mapsto F(x,(1-s)t),
\end{equation}
then \( H(0,y)=y\), \( H(1,y)=\pi_r(y). \) Moreover, \(H(s,A_r)\subset A_r\) for every \(s\in[0,1]\).

Let \( S_r=H_{\#}(\llbracket[0,1]\rrbracket\times T_r). \) By the homotopy formula for currents,
\begin{align*}
  \partial S_r&=(H_1)_{\#}T_r-(H_0)_{\#}T_r-H_{\#}(\llbracket[0,1]\rrbracket\times \partial T_r)\\
  &=(\pi_r)_{\#}T_r-T_r-H_{\#}(\llbracket[0,1]\rrbracket\times \partial T_r).
\end{align*}
By Lemma \ref{Sigma=pirTr}, \( (\pi_r)_{\#}T_r=\llbracket\Sigma\rrbracket, \) we obtain
\[
T_r-\llbracket\Sigma\rrbracket=
-\partial S_r-
H_{\#}(\llbracket[0,1]\rrbracket\times \partial T_r).
\]
Over \(\mathbb Z_2\), the signs are irrelevant. Hence \( T_r-\llbracket\Sigma\rrbracket = \partial S_r+R_r, \) where \( R_r=H_{\#}(\llbracket[0,1]\rrbracket\times \partial T_r). \)

Since \(T_r\) is a relative cycle in \((K_r,A_r)\), we have \( \operatorname{spt}\partial T_r\subset A_r. \) And since \(H(s,A_r)\subset A_r\), it follows that \( \operatorname{spt}R_r\subset A_r. \) Therefore \( T_r-\llbracket\Sigma\rrbracket = \partial S_r+R_r,\) with \( \operatorname{spt}R_r\subset A_r. \) This shows that \(T_r-\llbracket\Sigma\rrbracket\) is a relative boundary up to the filling \(S_r\).

It remains to estimate \(\mathbf M(S_r)\). We claim that \( \mathbf M(S_r)\le Cr\,\mathbf M(T_r). \) Indeed, for \(|t|\le r\) and \( H(s,F(x,t))=F(x,(1-s)t), \) differentiating in \(s\), we get
\[
\partial_s H(s,F(x,t))=
-t\,\partial_tF(x,(1-s)t).
\]
Since \(F\) is a smooth tubular parametrization and \(r\) is sufficiently small, there is a constant \(C_1>0\), independent of \(r\), such that \( |\partial_tF|\le C_1. \) Therefore \( |\partial_s H(s,F(x,t))|\le C_1|t|\le C_1r. \)

Moreover, the derivatives of \(H\) in the spatial directions are uniformly bounded in $K_r$, that is: \( |D_yH(s,F(x,t))|\le C_2. \) Hence for the \((n+1)\)-Jacobian of \(H\) on the product current \( \llbracket[0,1]\rrbracket\times T_r \),
\begin{equation}\label{eq: Jn+1H}
J_{n+1}H\le Cr.
\end{equation} 
Consequently, by the mass estimate for push-forwards,
\[
\mathbf M(S_r)=
\mathbf M\bigl(H_{\#}(\llbracket[0,1]\rrbracket\times T_r)\bigr)\le
\int_{\llbracket[0,1]\rrbracket\times T_r} J_{n+1}H\le
Cr\,\mathbf M(T_r).
\]

By the definition of the relative flat norm, we have
\[
\mathcal F_{\mathrm{rel}}(T_r-\llbracket\Sigma\rrbracket)
\le
\mathbf M(S_r)\le
Cr\,\mathbf M(T_r)\le Cr\mathbf M(\llbracket\Sigma\rrbracket)\to0 \qquad\text{as }r\to0
\]as desired.
\end{proof}

\subsubsection*{Varifold Convergence}

We next upgrade the mass and relative flat convergence to varifold convergence. The support of \(T_r\) lies in \(K_r\), and hence any varifold limit is supported on \(\Sigma\). The mass convergence shows that the limiting mass is exactly \(\mathcal H^n(\Sigma)\). It remains only to show that no tangent-plane oscillation is lost in the limit.

The first Jacobian estimate shown in Lemma \ref{Jestimate} gives mass convergence, the second estimate detects the tilt of \(T_r\) away from \(T\Sigma\): if a tangent plane is not close to \(T\Sigma\), the orthogonal projection strictly decreases its \(n\)-dimensional volume. This gives the tilt-excess convergence.

\begin{lemma}\label{tilt}
With all definitions same as in above sections, we have: 
\[
\int
\operatorname{dist}_{\mathcal G}
\left(
\operatorname{Tan}(T_r,y),
T_{\pi_r(y)}\Sigma
\right)^2
\,d\|T_r\|(y)\le Cr
\to0\qquad\text{as }r\to0.
\]
Here the constant \(C>0\) is independent of \(r\) and $\operatorname{dist}_{\mathcal G}$ is the the Grassmannian distance introduced in Lemma \ref{Jestimate}.
\end{lemma}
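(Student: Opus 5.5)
The plan is to run the same projection argument as in Lemma \ref{massconvergence}, this time feeding in the sharper second Jacobian estimate of Lemma \ref{Jestimate}. By Lemma \ref{Sigma=pirTr} we have \((\pi_r)_{\#}T_r=\llbracket\Sigma\rrbracket\). The mass estimate for push-forwards of rectifiable \(\mathbb Z_2\)-currents then gives
\[
\mathcal H^n(\Sigma)=\mathbf M((\pi_r)_{\#}T_r)\le\int J_n\pi_r\bigl(y,\operatorname{Tan}(T_r,y)\bigr)\,d\|T_r\|(y).
\]
Since the approximate tangent plane exists \(\|T_r\|\)-a.e., we may insert the pointwise bound of Lemma \ref{Jestimate}(2) into the integrand. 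This yields
\[
\mathcal H^n(\Sigma)\le(1+Cr)\mathbf M(T_r)-c\int\operatorname{dist}_{\mathcal G}\bigl(\operatorname{Tan}(T_r,y),T_{\pi_r(y)}\Sigma\bigr)^2\,d\|T_r\|(y).
\]

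Next we rearrange and use the upper bound \(\mathbf M(T_r)\le\mathbf M(\llbracket\Sigma\rrbracket)\) from \eqref{eq: TrleSigma}. This gives
\[
c\int\operatorname{dist}_{\mathcal G}^2\,d\|T_r\|\le(1+Cr)\mathbf M(T_r)-\mathcal H^n(\Sigma)\le Cr\,\mathcal H^n(\Sigma).
\]
Dividing by \(c\) produces the claim with constant \(C\mathcal H^n(\Sigma)/c\), which is independent of \(r\). The minimality of \(T_r\) enters only through \eqref{eq: TrleSigma}. The homology constraint enters only through Lemma \ref{Sigma=pirTr}.

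The only step needing care is the pointwise use of Lemma \ref{Jestimate}(2) inside the push-forward inequality. Two points must be checked there. First, the Jacobian appearing in the push-forward mass formula is the tangential Jacobian of \(\pi_r\) restricted to \(\operatorname{Tan}(T_r,y)\). Since \(\pi_r\) is smooth on \(K_r\), this coincides with \(J_n\pi_r(y,S)\) as defined, with \(S=\operatorname{Tan}(T_r,y)\). Second, the Grassmannian distance in Lemma \ref{Jestimate} is measured after parallel transport to \(T_{\pi_r(y)}N\), and this is exactly the quantity appearing in the statement. Mod \(2\) multiplicities cause no trouble: \(\|T_r\|\) has density \(1\) a.e., so the integral is the plain area integral over the rectifiable carrier.

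One subtlety is that the right side of Lemma \ref{Jestimate}(2) could be negative. This is harmless, because we only use the upper bound on the Jacobian, which is nonnegative anyway. I therefore expect no genuine obstacle; the proof is essentially a two-line consequence of the earlier lemmas.
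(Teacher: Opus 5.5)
Your proposal is correct and is essentially the paper's own proof: push forward under $\pi_r$ using Lemma~\ref{Sigma=pirTr}, insert the second estimate of Lemma~\ref{Jestimate} pointwise at $S=\operatorname{Tan}(T_r,y)$, bound $\mathbf M(T_r)\le\mathbf M(\llbracket\Sigma\rrbracket)$ by minimality, and rearrange to get the constant $C\,\mathcal H^n(\Sigma)/c$. Your closing remark that the right-hand side of Lemma~\ref{Jestimate}(2) might be negative is moot: if that lemma holds, its right-hand side dominates a nonnegative Jacobian, and in any case you only integrate the inequality.
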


\begin{proof}
Apply the second estimate from Lemma \ref{Jestimate} to \(S=\operatorname{Tan}(T_r,y)\), which exists for \(\|T_r\|\)-almost every \(y\). By the push-forward mass estimate for rectifiable currents, using \( (\pi_r)_{\#}T_r=\llbracket\Sigma\rrbracket, \) we have:
\[
\mathbf M(\llbracket\Sigma\rrbracket)
=\mathbf M((\pi_r)_{\#}T_r)\le
\int J_n\pi_r
\left(
y,\operatorname{Tan}(T_r,y)
\right)
\,d\|T_r\|(y).
\]
Furthermore, by the Jacobian estimate and the choice of minimizer $T_r,$
\begin{align*}
  \mathbf M(\llbracket\Sigma\rrbracket)&\le(1+Cr)\mathbf M(T_r)-c\int\operatorname{dist}_{\mathcal G}\left(\operatorname{Tan}(T_r,y),T_{\pi_r(y)}\Sigma \right)^2 d\|T_r\|(y)\\
  &\le(1+Cr)\mathbf M(\llbracket\Sigma\rrbracket)-c\int\operatorname{dist}_{\mathcal G}\left(\operatorname{Tan}(T_r,y),T_{\pi_r(y)}\Sigma\right)^2d\|T_r\|(y).
\end{align*}
Rearranging, we obtain
\begin{equation}\label{eq: tiltexcess}
\int
\operatorname{dist}_{\mathcal G}
\left(
\operatorname{Tan}(T_r,y),
T_{\pi_r(y)}\Sigma
\right)^2
d\|T_r\|(y)\le
\frac{C}{c}\mathbf M(\llbracket\Sigma\rrbracket)r=:C'r\to0\qquad\text{as }r\to0.
\end{equation}
This proves the tilt-excess convergence.
\end{proof}

With lemma \ref{tilt} above, it suffices to prove the varifold convergence of $V_r.$

\begin{corollary}\label{vrfdconvergence}
Under the assumptions above, \( V_r:=|T_r|\to |\Sigma| \) as varifolds.
\end{corollary}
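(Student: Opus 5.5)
We will show that for every test function \(\varphi\in C_c(\mathcal G_n(N))\) (equivalently, every Lipschitz \(f\) with \(\|f\|_\infty\le1\), \(\operatorname{Lip} f\le1\), as in the definition of \(\mathbf F_V\)) we have \(V_r(\varphi)\to|\Sigma|(\varphi)\) as \(r\to0\). To do this we compare \(V_r\) with the auxiliary quantity
\[
W_r(\varphi):=\int \varphi\bigl(\pi_r(y),T_{\pi_r(y)}\Sigma\bigr)\,d\|T_r\|(y),
\]
obtained by moving each point of \(T_r\) to its projection on \(\Sigma\) and replacing its tangent plane by \(T\Sigma\). The comparison \(V_r\approx W_r\) is controlled by Lemma~\ref{tilt}, and the comparison \(W_r\approx|\Sigma|\) is controlled by Lemmas~\ref{Sigma=pirTr} and~\ref{massconvergence}.

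\textbf{Step 1.} For Lipschitz \(f\) we estimate
\[
|V_r(f)-W_r(f)|\le\int\Bigl(d\bigl(y,\pi_r(y)\bigr)+\operatorname{dist}_{\mathcal G}\bigl(\operatorname{Tan}(T_r,y),T_{\pi_r(y)}\Sigma\bigr)\Bigr)\,d\|T_r\|.
\]
Here we use parallel transport along the flow line, exactly as in the proof of Lemma~\ref{Jestimate}, to compare planes at \(y\) and at \(\pi_r(y)\). The first term is at most \(Cr\,\mathbf M(T_r)\), since \(d(y,\pi_r(y))\le Cr\) on \(K_r\). By Cauchy--Schwarz and Lemma~\ref{tilt}, the second term is at most \(\bigl(C r\,\mathbf M(T_r)\bigr)^{1/2}\). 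Hence \(|V_r(f)-W_r(f)|\to0\).

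\textbf{Step 2.} This is the main point. The quantity \(W_r\) depends only on the measure \(\mu_r:=(\pi_r)_\#\|T_r\|\) on \(\Sigma\), namely
\[
W_r(\varphi)=\int_\Sigma\varphi(x,T_x\Sigma)\,d\mu_r(x),
\]
so it suffices to show \(\mu_r\to\mathcal H^n\llcorner\Sigma\) weakly.

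\emph{Lower bound.} For every Borel set \(B\subset\Sigma\), localizing the proof of Lemma~\ref{Sigma=pirTr} gives
\[
\mathcal H^n(B)=\mathbf M\bigl(((\pi_r)_\#T_r)\llcorner B\bigr)\le\int_{\pi_r^{-1}(B)}J_n\pi_r\,d\|T_r\|\le(1+Cr)\,\mu_r(B),
\]
where the last inequality uses Lemma~\ref{Jestimate}(1). Thus \(\mu_r\ge(1+Cr)^{-1}\mathcal H^n\llcorner\Sigma\).

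\emph{Total mass.} By Lemma~\ref{massconvergence}, \(\mu_r(\Sigma)=\mathbf M(T_r)\le\mathcal H^n(\Sigma)\).

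\emph{Conclusion.} The difference \(\mu_r-(1+Cr)^{-1}\mathcal H^n\llcorner\Sigma\) is a nonnegative measure of total mass at most \(\bigl(1-(1+Cr)^{-1}\bigr)\mathcal H^n(\Sigma)=O(r)\). It therefore tends to zero in total variation, which gives \(\mu_r\to\mathcal H^n\llcorner\Sigma\) and hence \(W_r\to|\Sigma|\).

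\textbf{Step 3.} Combining Steps 1 and 2 yields \(\mathbf F_V(V_r,|\Sigma|)\to0\) uniformly over the admissible class of test functions. Multiplicity one is automatic: the limit is \(|\Sigma|\) itself, not an integer multiple of it, because \(\|V_r\|(N)=\mathbf M(T_r)\to\mathcal H^n(\Sigma)\).

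The step I expect to need the most care is the localized push-forward inequality in Step 2. It requires that \(((\pi_r)_\#T_r)\llcorner B=(\pi_r)_\#(T_r\llcorner\pi_r^{-1}(B))\) and that the area formula bound holds on subsets. Both facts are standard for Lipschitz push-forwards of rectifiable mod 2 currents, but they should be stated explicitly. A further technicality is making the plane comparison in Step 1 precise across different base points, which we handle via the same parallel transport \(\mathcal P_{y\to x}\) used in Lemma~\ref{Jestimate}.
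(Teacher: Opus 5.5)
Your proposal is correct and follows the paper's proof. The paper likewise first replaces \((y,\operatorname{Tan}(T_r,y))\) by \((\pi_r(y),T_{\pi_r(y)}\Sigma)\), using \(d(y,\pi_r(y))\le Cr\) and the tilt-excess bound of Lemma~\ref{tilt}. It then shows \((\pi_r)_\#\|T_r\|\to\mathcal H^n\llcorner\Sigma\) from \((\pi_r)_\#T_r=\llbracket\Sigma\rrbracket\), \(J_n\pi_r\le 1+Cr\) and \(\mathbf M(T_r)\le\mathbf M(\Sigma)\).

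Your Step~2 supplies the justification the paper compresses into ``the projection loses no mass in the limit''. The localized lower bound \(\mu_r\ge(1+Cr)^{-1}\mathcal H^n\llcorner\Sigma\), combined with the total-mass bound, gives convergence in total variation. In addition, using Lipschitz test functions with Cauchy--Schwarz yields the quantitative rate \(\mathbf F_V(V_r,|\Sigma|)=O(r^{1/2})\).
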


\begin{proof}
Let \(\varphi\in C_c(G_n(N))\). We need to show \( \int \varphi(y,S)\,dV_r \to \int \varphi(x,T_x\Sigma)\,d|\Sigma|. \)

By support convergence, for \(\|T_r\|\)-almost every \(y\in\operatorname{spt}T_r\), \( d_N(y,\pi_r(y))\le Cr. \) By tilt-excess convergence, \( \int \operatorname{dist}_{\mathcal G} \left( \operatorname{Tan}(T_r,y), T_{\pi_r(y)}\Sigma \right)^2 \,d\|T_r\|(y) \to0. \) Since \(\varphi\) is uniformly continuous on the compact region, it follows that
\[
\int
\left|
\varphi\left(y,\operatorname{Tan}(T_r,y)\right)-
\varphi\left(\pi_r(y),T_{\pi_r(y)}\Sigma\right)
\right|
\,d\|T_r\|(y)
\to 0.
\]
Thus it suffices to prove
\[
\int
\varphi\left(\pi_r(y),T_{\pi_r(y)}\Sigma\right)
\,d\|T_r\|(y)
\to
\int_{\Sigma}\varphi(x,T_x\Sigma)\,d\mathcal H^n(x).
\]
This follows from \( (\pi_r)_{\#}T_r=\llbracket\Sigma\rrbracket \) together with the mass convergence \( \mathbf M(T_r)\to \mathbf M(\llbracket\Sigma\rrbracket) \) and the Jacobian estimate \( J_n\pi_r\le 1+Cr. \) Indeed, the projection loses no mass in the limit, so the push-forward of the measures \( \|T_r\| \) under \(\pi_r\) converges weakly to \( \mathcal H^n\llcorner\Sigma. \) Therefore
\[
\int \varphi(y,S)\,dV_r(y,S)
\to
\int_{\Sigma}\varphi(x,T_x\Sigma)\,d\mathcal H^n(x).
\]
Hence \( V_r\to|\Sigma| \) as varifolds. Moreover, since the limiting varifold is precisely \(|\Sigma|\), rather than \(m|\Sigma|\) for some \(m\ge2\), the convergence is of multiplicity one.
\end{proof}

We have therefore shown that the relative minimizers converge to \(\Sigma\) with multiplicity one as varifolds. This is still not enough to apply regularity theory: we also need a uniform first variation bound. This is the purpose of the next subsection.

\subsection{First Variation Bound}\label{3.3}
The next goal is to prove a first variation bound for \(V_r=|T_r|\) which is uniform in \(r\). This is not immediate from the minimizing property, because \(T_r\) minimizes only among competitors supported in \(K_r\). A general free-boundary variation may move points outside \(K_r\). The idea is to compose such a variation with the Lipschitz retraction \(P_r\) constructed in Section~\ref{3.1}. This produces admissible competitors in the same relative homology class.
\begin{lemma}\label{competitor}
Let \(K_r\), \( A_r,\) defined as above, and let \( T_r\in \mathcal Z_n(K_r,A_r;\mathbb Z_2) \) be a relative cycle. Let \(X\in \mathfrak{X}^\partial(N)\), namely
\[
X(p)\in T_p\partial N\qquad p\in\partial N,
\]
and let \(\psi_t\) be the flow generated by \(X\). Define
\[
P_r:F(\Sigma\times(-r_0,r_0))\to K_r,\qquad F(x,s)\mapsto F(x,q_r(s)),
\]
as in Section \ref{3.1}. For \(|t|\) sufficiently small, set \( f_t:=P_r\circ\psi_t. \) Then \((f_t)_\#T_r\in \mathcal Z_n(K_r,A_r;\mathbb Z_2). \) Moreover,
\[
[(f_t)_\#T_r]=[T_r]
\qquad
\text{in }H_n(K_r,A_r;\mathbb Z_2).
\]
In particular, if \( [T_r]=[\llbracket\Sigma\rrbracket], \) then \( [(f_t)_\#T_r]=[\llbracket\Sigma\rrbracket]. \)
\end{lemma}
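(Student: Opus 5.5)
The plan is to treat $f_t=P_r\circ\psi_t$ as a Lipschitz map of pairs $(K_r,A_r)\to(K_r,A_r)$ that is homotopic to the identity through such maps. The homotopy formula will then give both conclusions, exactly as in the proof of Lemma~\ref{flatconvergence}.

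First we check that $f_t$ is well defined and preserves the pair. Since $K_r$ is compact and contained in the open set $F(\Sigma\times(-r_0,r_0))$, for $|t|$ small we have $\psi_t(K_r)\subset F(\Sigma\times(-r_0,r_0))$. Hence $P_r\circ\psi_t$ is defined on $K_r$, is Lipschitz, and maps into $K_r$. Because $X\in\mathfrak X^\partial(N)$, the flow satisfies $\psi_t(\partial N)\subset\partial N$, and by Section~\ref{3.1} we have $P_r(\partial N\cap\mathcal U)\subset\partial N$. Together these give
\[
f_t(A_r)\subset K_r\cap\partial N=A_r.
\]
Consequently $(f_t)_\#T_r$ is a rectifiable $\mathbb Z_2$-current supported in $K_r$. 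Moreover $\partial (f_t)_\#T_r=(f_t)_\#\partial T_r$ is supported in $f_t(A_r)\subset A_r$. This shows $(f_t)_\#T_r\in\mathcal Z_n(K_r,A_r;\mathbb Z_2)$, and the push-forward respects the equivalence relation because $f_t$ maps currents in $A_r$ to currents in $A_r$.

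For the homology statement, define $G:[0,1]\times K_r\to K_r$ by $G(s,y):=P_r(\psi_{st}(y))$. Then $G(0,\cdot)=P_r|_{K_r}=\operatorname{id}_{K_r}$ and $G(1,\cdot)=f_t$. The map $G$ is Lipschitz, and by the argument above $G(s,A_r)\subset A_r$ for every $s$. The homotopy formula gives
\[
(f_t)_\#T_r-T_r=\partial G_\#(\llbracket[0,1]\rrbracket\times T_r)\pm G_\#(\llbracket[0,1]\rrbracket\times\partial T_r).
\]
The first term is the boundary of an $(n+1)$-current supported in $K_r$. The second term is supported in $G([0,1]\times A_r)\subset A_r$. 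Hence $[(f_t)_\#T_r]=[T_r]$ in $H_n(K_r,A_r;\mathbb Z_2)$, and the final assertion about $[\llbracket\Sigma\rrbracket]$ follows immediately.

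The only delicate point is the regularity needed to apply push-forwards and the homotopy formula. Here $P_r$ is merely Lipschitz, not smooth. I would handle this by invoking the homotopy formula for Lipschitz maps on rectifiable and flat chains modulo $2$ (Federer 4.1.14 and 4.2.26), which is valid for Lipschitz homotopies. I would also make sure $|t|$ is chosen uniformly small so that $\psi_{st}(K_r)$ stays inside the domain of $P_r$ for all $s\in[0,1]$; compactness of $K_r$ gives this.
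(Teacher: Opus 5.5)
Your proposal is correct and matches the paper's proof essentially step for step. Both arguments use the same homotopy of pairs $(\lambda,y)\mapsto P_r(\psi_{\lambda t}(y))$ from $\operatorname{id}_{K_r}$ to $f_t$, and apply the homotopy formula to get $(f_t)_\#T_r-T_r=\partial Q_t+R_t$ with $\operatorname{spt}R_t\subset A_r$. Your additional remarks fill in points the paper leaves implicit: choosing $|t|$ small so that $\psi_{st}(K_r)$ stays in the domain of $P_r$, invoking the Lipschitz version of the homotopy formula, and checking compatibility with the relative equivalence relation.
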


\begin{proof}
Since \(X\in \mathfrak X^\partial(N)\), its flow preserves the ambient boundary, i.e. \( \psi_t(\partial N)\subset \partial N. \) Also, by construction of the defining function \(\tau\) in section \ref{3.1}, we have \( \nabla\tau\in T\partial N\) on \(\partial N, \) and the vector field \( Z=\frac{\nabla\tau}{|\nabla\tau|^2}\) tangent to \(\partial N\) along \(\partial N\). Therefore the flow \(F\) of \(Z\) preserves \(\partial N\), and so does the retraction \(P_r\), i.e. \( P_r(\partial N)\subset \partial N. \)

Since \(P_r\) maps its domain into \(K_r\), we have \( f_t(K_r)=(P_r\circ\psi_t)(K_r)\subset K_r. \) Furthermore, if \(p\in A_r=K_r\cap\partial N\), then \( \psi_t(p)\in\partial N, \) and hence \( f_t(p)=P_r(\psi_t(p))\in \partial N, \) i.e. \( f_t(A_r)\subset A_r. \) Thus \( f_t:(K_r,A_r)\to (K_r,A_r) \) is a map of pairs.

Therefore, with above discussion, immediately we have \( (f_t)_\#T_r\in\mathcal Z_n(K_r,A_r;\mathbb Z_2). \) since the push-forward commutes with boundary.

It remains to show that \((f_t)_\#T_r\) represents the same relative homology class as \(T_r\). Define
\begin{equation}\label{eq: homotopyPr}
H:[0,1]\times K_r\to K_r \qquad\text{by }H(\lambda,p)=P_r(\psi_{\lambda t}(p)).
\end{equation}
Since \(P_r|_{K_r}=\operatorname{id}_{K_r}\), we have
\[
H(0,p)=P_r(p)=p,\quad H(1,p)=P_r(\psi_t(p))=f_t(p).
\]
Moreover, because \(\psi_{\lambda t}(\partial N)\subset\partial N\) and \( P_r(\partial N)\subset\partial N, \) we have \( H(\lambda,A_r)\subset A_r \) Thus \(H\) is a homotopy of pairs between \( \operatorname{id}_{(K_r,A_r)}\) and \(f_t.\)

By the homotopy formula, there exist
\[
H_\#\bigl([0,1]\times T_r\bigr)=:Q_t\in \mathcal I_{n+1}(K_r;\mathbb Z_2), \qquad
H_\#\bigl([0,1]\times \partial T_r\bigr)=:R_t\in \mathcal I_n(A_r;\mathbb Z_2),
\]
such that \( (f_t)_\#T_r-T_r=\partial Q_t+R_t. \) Therefore \( [(f_t)_\#T_r]=[T_r]\) in \(H_n(K_r,A_r;\mathbb Z_2). \) In particular, if \( [T_r]=[\llbracket\Sigma\rrbracket], \) then \( [(f_t)_\#T_r]=[\llbracket\Sigma\rrbracket]. \)
\end{proof}
Lemma~\ref{competitor} shows that the retracted variation produces legitimate competitors. To extract a first variation estimate from the minimizing property, we also need to know how the \(n\)-Jacobian of this retracted variation changes to first order in \(t\). This is the content of the next lemma.

The additional term \(C|X|\) below measures the cost of retracting the varied surface back into \(K_r\). It is uniform in \(r\), which is the crucial point for applying regularity later.
\begin{lemma}\label{Jf}
Let \(M\subset K_r\) be a countably \(n\)-rectifiable set, let \(X\in \mathfrak X^\partial(N)\), and let \(\psi_t\) be the flow generated by \(X\). Set \( f_t=P_r\circ\psi_t. \) Then there exists a constant \(C>0\), independent of \(r\), such that for \(\mathcal H^n\)-almost every \(y\in M\), and setting $S:=\operatorname{Tan}(M,y)$, one has
\[
J_n(Df_t(y)|_S)\le
1+t\operatorname{div}_S X(y)+C|t||X(y)|+o(t),
\]
where \(o(t)/t\to0\) as \(t\to0\), locally uniformly in \((y,S)\).
\end{lemma}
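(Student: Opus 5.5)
The plan is to use the chain rule to split the Jacobian of \(f_t=P_r\circ\psi_t\) into a smooth flow factor and a Lipschitz retraction factor:
\[
J_n(Df_t(y)|_S)=J_n\bigl(DP_r(\psi_t(y))|_{D\psi_t(y)S}\bigr)\cdot J_n\bigl(D\psi_t(y)|_S\bigr),
\]
valid at \(\mathcal H^n\)-a.e.\ \(y\in M\). At such points \(P_r\) is approximately tangentially differentiable along the rectifiable image \(\psi_t(M)\) (Rademacher, as noted at the end of Section~\ref{3.1}). For the flow factor we use the standard first-variation expansion \(J_n(D\psi_t(y)|_S)=1+t\operatorname{div}_SX(y)+o(t)\). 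It is locally uniform in \((y,S)\) because \(\psi_t\) is smooth in \((t,y)\) and \(D\psi_t=\mathrm{id}+tDX+O(t^2)\). So the whole proof reduces to showing
\[
J_n\bigl(DP_r(\psi_t(y))|_{S_t}\bigr)\le 1+C|t||X(y)|+o(t),\qquad S_t:=D\psi_t(y)S,
\]
with \(C\) independent of \(r\).

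For the retraction factor we split into cases according to where \(\psi_t(y)\) lies.

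\emph{Case 1:} \(\psi_t(y)\in K_r\) (interior or where \(q_r\) is the identity). Here \(P_r=\mathrm{id}\) near that point, or tangentially on the relevant piece, and the factor is \(1\).

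\emph{Case 2:} \(\psi_t(y)=F(x,s)\) with \(|s|>r\). Then \(P_r=F\circ(\mathrm{id}\times\{\pm r\})\circ F^{-1}\) locally, i.e.\ it is the composition of the smooth maps \(F(x,s)\mapsto F(x,r)\) (with the sign matching \(s\)). Write \(G_{s\to r}\) for this holonomy map along the flow of \(Z\). It is the time-\((r-s)\) flow of \(Z\) composed with the projection killing the \(Z\)-component. Since \(Z\perp\Sigma_s\) and \(\Phi_{r-s}\) is smooth with \(D\Phi_\sigma=\mathrm{id}+O(|\sigma|)\), the differential of \(G_{s\to r}\) equals an orthogonal-type projection onto \(T\Sigma_s\) (Jacobian \(\le1\), as in the linear-algebra step of Lemma~\ref{Jestimate}) followed by a map with Jacobian at most \(1+C|s-r|\). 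Hence
\[
J_n\le 1+C_0|s-r|,
\]
with \(C_0\) depending only on \(F\) and \(Z\) on the fixed neighborhood, not on \(r\).

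The key observation is that \(y\in K_r\) means \(|\tau(y)|\le r\), while \(\tau(\psi_t(y))=s\). Therefore
\[
|s|-r\le|\tau(\psi_t(y))-\tau(y)|\le \|\nabla\tau\|_\infty\,|\psi_t(y)-y|\le C|t||X(y)|+O(t^2).
\]
This is exactly where the term \(C|t||X(y)|\) comes from. The \(O(t^2)\) term is \(o(t)\) locally uniformly, since \(|\psi_t(y)-y-tX(y)|\le Ct^2\).

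\emph{Case 3:} the boundary surfaces \(\{s=\pm r\}\). These contribute only \(|s|-r=0\). At points where \(\psi_t(M)\) meets them in positive measure, the approximate tangential differential coincides with one of the one-sided smooth expressions. So the bound from Case 1 or Case 2 applies and no extra care is needed.

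Multiplying the two factors, and absorbing \(t\operatorname{div}_SX\cdot C|t||X|=O(t^2)\) into \(o(t)\), gives
\[
J_n(Df_t(y)|_S)\le 1+t\operatorname{div}_SX(y)+C|t||X(y)|+o(t).
\]

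The main obstacle is Case 2: getting a Jacobian bound \(1+C|s-r|\) for the retraction that is uniform in \(r\). Pure Lipschitz bounds on \(P_r\) give only \(\operatorname{Lip}(P_r)^n\), which is not close to \(1\). The plan is to exploit \(Z\perp\Sigma_s\), so that \(DP_r\) at a point of \(\Sigma_s\) is a near-orthogonal projection onto the leaf, transported by the smooth holonomy \(\Phi_{r-s}\). The \(r\)-independence follows because all the quantities are smooth functions of \((x,s,\sigma)\) on the fixed compact set \(\Sigma\times[-r_0,r_0]^2\).
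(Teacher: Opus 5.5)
Your proposal is correct and follows essentially the same route as the paper. Both factor $J_n(Df_t(y)|_S)$ by the chain rule and use the standard expansion $1+t\operatorname{div}_SX(y)+o(t)$ for the flow. Both bound the retraction factor by $1+C\,|s-q_r(s)|$, writing $DP_r$ as orthogonal projection onto $T\Sigma_s$ (using $Z\perp\Sigma_s$) followed by the leaf-to-leaf holonomy. Both then control $|s-q_r(s)|\le|\tau(\psi_t(y))-\tau(y)|\le C|t||X(y)|+o(t)$ using $y\in K_r$.

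The differences are cosmetic. The paper gets the holonomy Jacobian bound from a Gronwall estimate on the induced leaf metrics $g_s$, rather than from $D\Phi_\sigma=\mathrm{id}+O(|\sigma|)$. You also explicitly treat the non-smooth level sets $\{\tau=\pm r\}$, which the paper passes over by restricting to points where $P_r$ is differentiable.
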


\begin{proof}
Recall that \( F:\Sigma\times(-r_0,r_0)\to \mathcal U \) is defined by \( F(x,s)=\Phi_s(x), \) where \(\Phi_s\) is the flow of \( Z={\nabla\tau}/{|\nabla\tau|^2}. \) The ambient Riemannian metric on \(N\) is denoted by \(g\). We pull it back by \(F\) and write \( \bar g:=F^*g \) for the induced metric on \(\Sigma\times(-r_0,r_0)\).

For each fixed \(s\), let
\[
F_s:\Sigma\to \Sigma_s \qquad\text{by }F_s(x)=F(x,s).
\]
Then the metric induced on \(\Sigma\) from the leaf \(\Sigma_s\) is \( g_s:=F_s^*g. \)

For each \(s\in(-r_0,r_0)\), let
\[
i_s:\Sigma\to \Sigma\times(-r_0,r_0),
\qquad
i_s(x)=(x,s)
\]
be the inclusion of the \(s\)-slice. Then \( F_s=F\circ i_s\), Therefore,
\[
g_s:=F_s^*g=
(F\circ i_s)^*g=
i_s^*(F^*g)=
i_s^*\bar g.
\]
In other words, \( g_s=F_s^*g=i_s^*\bar g. \) Thus \(g_s\) is precisely the restriction of the pulled-back metric \(\bar g\) to the slice \(\Sigma\times\{s\}\).

Since \(\partial_sF(x,s)=Z(F(x,s)), \) and since \( Z\perp T\Sigma_s, \) the pulled-back metric \(\bar g\) has no mixed terms. Thus
\[
\bar g=g_s+\lambda^2 ds^2\qquad\text{where }\lambda(x,s)=|Z(F(x,s))|_g=\frac{1}{|\nabla\tau(F(x,s))|_g}.
\]
In particular, for every \(s\),
\[
T_{F(x,s)}N=T_{F(x,s)}\Sigma_s\oplus \mathbb R Z(F(x,s))
\]
is an orthogonal decomposition with respect to the ambient metric \(g\).

We first estimate the Jacobian of the leaf-to-leaf map. For \(s,s'\in(-r_0,r_0)\), define
\[
\mathcal L_{s\to s'}:\Sigma_s\to\Sigma_{s'}\qquad\text{by }\mathcal L_{s\to s'}(F(x,s))=F(x,s').
\]
Equivalently, \( \mathcal L_{s\to s'}=F_{s'}\circ F_s^{-1}. \)

Since \(\bar g=F^*g\) is smooth on the compact set \( \Sigma\times[-r_0,r_0], \) the family \(g_s\) depends smoothly on \(s\). Hence there exists \(C_1>0\) such that for all \(\xi\in T_x\Sigma \) and all \(|s|\le r_0\),
\[
\left|
\frac{d}{ds}g_s(\xi,\xi)
\right|\le
C_1 g_s(\xi,\xi).
\]
Indeed, this follows because \(\partial_s g_s\) is a smooth family of symmetric bilinear forms on a compact set.

By Gronwall's inequality,
\[
e^{-C_1|s-s'|}g_s(\xi,\xi)
\le
g_{s'}(\xi,\xi)
\le
e^{C_1|s-s'|}g_s(\xi,\xi).
\]
Therefore, as quadratic forms on \(T_x\Sigma\), \( g_{s'}\le e^{C_1|s-s'|}g_s. \) Taking \(n\)-dimensional volume elements, using \(|s-s'|<r_0\) and increasing the constant, we obtain
\begin{equation}\label{eq: LEAFESTIMATE}
  \begin{aligned}
J_n\bigl(d\mathcal L_{s\to s'}|_{T\Sigma_s}\bigr)
&=
\frac{d\operatorname{vol}_{g_{s'}}}{d\operatorname{vol}_{g_s}}  
\le
\left(e^{C_1|s-s'|}\right)^{n/2}  
=
e^{\frac n2 C_1|s-s'|}  
\le
1+C_2|s-s'|.
\end{aligned}
\end{equation}

Next we estimate the Jacobian of \(P_r\). Write a point \(z\in \mathcal U\) as \( z=F(x,s). \) Recall that
\[
P_r(F(x,s))=F(x,q_r(s)) \qquad\text{where }q_r(s)=
\begin{cases}
-r,&s<-r,\\
s,&-r\le s\le r,\\
r,&s>r.
\end{cases}
\]
At points where \(P_r\) is differentiable, take an arbitrary vector \( v\in T_zN, \) using the orthogonal splitting \( T_zN=T_z\Sigma_s\oplus \mathbb R Z(z), \) write
\[
v=v^T+aZ(z),
\qquad
v^T\in T_z\Sigma_s.
\]
Since the splitting is orthogonal, one has \( |v^T|_g\le |v|_g. \)

If \(|s|\le r\), then \(q_r(s)=s\), so \(P_r\) is the identity near \(z\), and hence \( J_n(DP_r(z)|_S)=1. \)

If \(s>r\) or \(s<-r\), then \(q_r(s)\) is constant in \(s\), and \(DP_r\) kills the \(Z\)-component. More precisely,
\[
DP_r(z)(v)=d\mathcal L_{s\to q_r(s)}(v^T).
\]
Let \( \operatorname{pr}_s:T_zN\to T_z\Sigma_s \) be the orthogonal projection. Then \( v^T=\operatorname{pr}_s(v). \) Thus, for every \(n\)-plane \(S\subset T_zN\),
\[
J_n(DP_r(z)|_S)
\le
J_n(d\mathcal L_{s\to q_r(s)}|_{T\Sigma_s})\,
J_n(\operatorname{pr}_s|_S).
\]
Since \(\operatorname{pr}_s\) is an orthogonal projection, \(J_n(\operatorname{pr}_s|_S)\le1. \) Using \eqref{eq: LEAFESTIMATE}, we obtain
\[
J_n(DP_r(z)|_S)
\le
1+C_2|s-q_r(s)|.
\]

Define the \(\tau\)-distance from \(z=F(x,s)\) to \(K_r\) by \( d_\tau(z,K_r):=|s-q_r(s)|. \) Then the previous estimate becomes \( J_n(DP_r(z)|_S) \le 1+C_2d_\tau(z,K_r). \)

Now let \(z_t=\psi_t(y),\) \(y\in K_r. \) Immediately we have \( |\tau(y)|\le r. \) Taylor expansion gives
\[
\tau(z_t)=\tau(\psi_t(y))
=
\tau(y)+t\,d\tau_y(X(y))+o(t).
\]
Since \(d\tau\) is bounded on the fixed compact set \(\{|\tau|\le r_0\}\), there exists \(C_3>0\) such that \( |d\tau_y(X(y))| \le C_3|X(y)|. \) Therefore
\[
|\tau(z_t)-\tau(y)|
\le
C_3|t||X(y)|+o(t).
\]
Since \(\tau(y)\in[-r,r]\), the distance from \(\tau(z_t)\) to the interval \([-r,r]\) is bounded by \( |\tau(z_t)-\tau(y)|. \) Hence
\[
d_\tau(z_t,K_r)
\le |\tau(z_t)-\tau(y)|\le
C_3t|X(y)|+o(t).
\]
Applying the \(P_r\)-Jacobian estimate at \(z_t=\psi_t(y)\), with the plane \( D\psi_t(y)(S)\subset T_{z_t}N, \) we get\begin{equation}\label{eq: JACOBI1} J_n(DP_r(z_t)|_{D\psi_t(y)(S)}) \le 1+C_4|t||X(y)|+o(t).
\end{equation}

On the other hand, the standard first-order expansion for the flow \(\psi_t\) gives
\begin{equation}\label{eq: JACOBI2}
  J_n(D\psi_t(y)|_S)
=
1+t\operatorname{div}_S X(y)+o(t).
\end{equation}

Now use the chain rule for Jacobians, substituting \eqref{eq: JACOBI1}, \eqref{eq: JACOBI2}:
\begin{align*}
  J_n(Df_t(y)|_S)
  &=J_n(D(P_r\circ\psi_t)(y)|_S)\le J_n(DP_r(z_t)|_{D\psi_t(y)(S)})\,J_n(D\psi_t(y)|_S)\\
  &\le\bigl(1+C_4|t||X(y)|+o(t)\bigr)\bigl(1+t\operatorname{div}_S X(y)+o(t)\bigr)\\
  &=1+t\operatorname{div}_S X(y)+C_4|t||X(y)|+o(t)
\end{align*}
This is the desired estimate.
\end{proof}

We now combine the homological invariance of the retracted variation with the Jacobian estimate. Since \(T_r\) is minimizing in its relative class, comparing it with \((P_r\circ\psi_t)_\#T_r\) gives the desired first variation bound.

\begin{proposition}\label{1variation}
Let $T_r$ be the mass minimizer of class \([\Sigma]\) in \((K_r,A_r)\) and let \(V_r=|T_r|.\) Then there exists a constant \(C>0\), independent of \(r\), such that for every \( X\in\mathfrak X^\partial(N), \) one has
\[
|\delta V_r(X)|
\le
C\int |X|\,d\|V_r\|.
\]
\end{proposition}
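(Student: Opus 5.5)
The plan is to compare $T_r$ with the retracted competitors $(f_t)_\#T_r$, where $f_t=P_r\circ\psi_t$ and $\psi_t$ is the flow of $X$, and to read off a one-sided derivative bound from minimality.

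\textbf{Step 1 (admissibility).} Fix $X\in\mathfrak X^\partial(N)$. By Lemma~\ref{competitor}, $(f_t)_\#T_r$ lies in $\mathcal Z_n(K_r,A_r;\mathbb Z_2)$ and represents $[\Sigma]$. Minimality of $T_r$ then gives
\[
\mathbf M(T_r)\le \mathbf M\bigl((f_t)_\#T_r\bigr)
\]
for all small $|t|$, of either sign.

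\textbf{Step 2 (area expansion).} Bound the right-hand side with the push-forward mass estimate for rectifiable currents and with Lemma~\ref{Jf}, applied on the rectifiable carrier $M$ of $T_r$ with $S=\operatorname{Tan}(T_r,y)$:
\[
\mathbf M(T_r)\le \int J_n\bigl(Df_t(y)|_{S}\bigr)\,d\|T_r\|(y)\le \mathbf M(T_r)+t\,\delta V_r(X)+C|t|\int|X|\,d\|V_r\|+o(t).
\]
This uses $\int \operatorname{div}_S X\,d\|T_r\|=\delta V_r(X)$, which holds since $V_r=|T_r|$.

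To integrate the $o(t)$ term one needs it to be uniform in $(y,S)$. This holds because $y$ ranges over the compact set $K_r$ and $S$ over the compact Grassmannian; moreover $\|T_r\|$ is a finite measure. The expansion is uniform in $(y,S)$ as long as $X$ is fixed, which is all that is needed. Here $X$ is smooth and $\psi_t$ is defined on $N$, so no compact-support issue arises beyond the definition of $\mathfrak X^\partial(N)$.

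\textbf{Step 3 (divide by $t$).} Cancel $\mathbf M(T_r)$ and divide by $t$.
\begin{itemize}
\item For $t>0$, letting $t\downarrow0$ gives $-\delta V_r(X)\le C\int|X|\,d\|V_r\|$.
\item For $t<0$, dividing flips the inequality, and letting $t\uparrow0$ gives $\delta V_r(X)\le C\int|X|\,d\|V_r\|$.
\end{itemize}
Equivalently, one can apply the $t>0$ case to $-X$, which is also in $\mathfrak X^\partial(N)$. Combining the two yields $|\delta V_r(X)|\le C\int|X|\,d\|V_r\|$, with $C$ the constant $C_4$ from Lemma~\ref{Jf}, which is independent of $r$.

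\textbf{Main obstacle.} The expected difficulty is Step 2, specifically justifying the Jacobian expansion for the merely Lipschitz map $P_r$. Its differentiability fails on $\{s=\pm r\}$, and $\psi_t(M)$ may partially lie on these leaves for a given $t$. Two facts handle this.

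First, the area formula only requires the approximate tangential differential of $f_t|_M$ for $\mathcal H^n$-a.e.\ $y$. On the part of $\psi_t(M)$ lying inside a leaf $\{s=\pm r\}$, the tangential differential of $P_r$ is well-defined: it equals the identity along the leaf. There the bound $1+C|t||X|$ still holds, by the one-sided version of the leaf estimate~\eqref{eq: LEAFESTIMATE}.

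Second, the inequality in Lemma~\ref{Jf} is only an upper bound. That is exactly the direction needed here, since we bound $\mathbf M((f_t)_\#T_r)$ from above.

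So the argument reduces to this bookkeeping, together with the uniformity of the $o(t)$ term discussed in Step 2.
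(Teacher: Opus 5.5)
Your proposal is correct and is essentially the paper's own proof. You compare $T_r$ with $(P_r\circ\psi_t)_\#T_r$ via Lemma~\ref{competitor}, bound the mass with the push-forward inequality and Lemma~\ref{Jf}, then divide by $t$ and use $-X$ (equivalently $t<0$) for the reverse inequality. Your additional remarks on the uniformity of the $o(t)$ term and on the behaviour of $P_r$ on the leaves $\{s=\pm r\}$ simply make explicit what the paper leaves inside Lemma~\ref{Jf} and its Rademacher remark.
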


\begin{proof}
Let \( \psi_t\) be the flow of \(X\), and \( f_t=P_r\circ\psi_t. \) By lemma \ref{competitor}, \( \mathbf M(T_r) \le \mathbf M((f_t)_\#T_r). \)

By the mass estimate of push-forwards and apply Lemma \ref{Jf} for $M=T_r$, we obtain,
\begin{align*}
  \mathbf M(T_r)&=\int_{G_n(N)}1\,dV_r
  \le\mathbf M((f_t)_\#T_r)
\le \int_{G_n(N)}J_n(Df_t(y)|_S)\,dV_r(y,S)\\
&\le \int_{G_n(N)}\left[1+t\operatorname{div}_S X(y)+Ct|X(y)|+o(t)\right]dV_r(y,S)
\end{align*}
hence
\[
0
\le
t\int_{G_n(N)}\operatorname{div}_S X(y)\,dV_r(y,S)
+
Ct\int |X|\,d\|V_r\|
+
o(t)\|V_r\|(K_r).
\]

Since \( \|V_r\|(K_r)=\mathbf M(T_r)\le \mathbf M(\llbracket\Sigma\rrbracket), \) the quantity \(\|V_r\|(K_r)\) is uniformly bounded in \(r\). Thus, after dividing by \(t>0\) and letting \(t\to0^+\), we get
\[
\delta V_r(X)
\ge
-C\int |X|\,d\|V_r\|.
\]

Applying the same argument to \(-X\), which also belongs to \(\mathfrak X^\partial(N)\), and combining the two inequalities yields
\[
|\delta V_r(X)|
\le
C\int |X|\,d\|V_r\|
\]as desired.
\end{proof}

This uniform first variation bound is the analytic input needed for the Allard-type regularity theorems. Together with the multiplicity-one varifold convergence proved in the previous subsection, it allows us to promote weak convergence to graphical convergence.
\subsection{Convergence as Graphs}\label{3.4}

We now apply the preceding regularity Theorems \ref{thm:allard-interior} and \ref{G} to the minimizers \(T_r\). The hypotheses of these theorems are verified using the multiplicity-one convergence from section~\ref{3.2} and the first variation bound from section~\ref{3.3}. The main conclusion is that, once \(r\) is sufficiently small, the support of \(T_r\) is not only close to \(\Sigma\) as a varifold, but also is a single free-boundary graph over \(\Sigma\).

\begin{proposition}\label{convergence as graphs}
Given \((N^{n+1},\partial N,g)\), \(\Sigma^n\subset N\), \(\tau\), \(Z\), \(F\), \(K_r\), \(A_r\) defined as in section \ref{3.1}.
Let 
\(
T_r\in \mathcal Z_n(K_r,A_r;\mathbb Z_2)
\) 
be the homologous minimizer of class $[\llbracket\Sigma\rrbracket]$ and set 
\(
V_r=|T_r|.
\) 

Assume that, as \(r\to0\), 
\(
V_r\to |\Sigma|
\) 
as varifolds with multiplicity one, and that there exists a constant \(C_0>0\), independent of \(r\), such that for every vector field 
\(
X\in \mathfrak X^\partial(N),
\) 
one has
\[
|\delta V_r(X)|
\le
C_0\int |X|\,d\|V_r\|.
\]
Then, after decreasing \(r_0\) if necessary, for every \(0<r<r_0\) there exists a function \( u_r:\Sigma\to [-r,r] \) such that
\[
\operatorname{spt}\|V_r\|=\operatorname{spt}T_r
=
\Gamma_{u_r}
:=
\{F(x,u_r(x)):x\in\Sigma\}.
\]
Moreover \( T_r=\llbracket \Gamma_{u_r}\rrbracket\) in \(\mathcal Z_n(K_r,A_r;\mathbb Z_2), \) and \( u_r\xrightarrow{C^{1}(\Sigma)}0 \) as $r\to0$.
\end{proposition}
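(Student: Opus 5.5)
The plan is to apply Theorems~\ref{thm:allard-interior} and~\ref{G} at every point of $\operatorname{spt}\|V_r\|$ at a fixed small scale $\rho$, which does not depend on $r$. This gives local $C^{1,\alpha}$ graphical descriptions. We then patch these into a single graph over $\Sigma$ using the tubular parametrization $F$ and the projection identity $(\pi_r)_\#T_r=\llbracket\Sigma\rrbracket$.

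\textbf{Step 1: hypotheses of the regularity theorems.} First, the first variation bound controls the mean curvature. Proposition~\ref{1variation} gives $|\delta V_r(X)|\le C_0\int|X|\,d\|V_r\|$ for $X\in\mathfrak X^\partial(N)$. By Riesz representation, $V_r$ therefore has free-boundary generalized mean curvature $H_r$ with $|H_r|\le C_0$ $\|V_r\|$-a.e. In particular the singular part of the variation vanishes, since testing against tangential fields controls the total variation measure. Consequently
\[
\rho^{1-n/p}\Bigl(\int_{B_\rho}|H_r|^p\,d\|V_r\|\Bigr)^{1/p}\le C_0\,\rho\,\bigl(\|V_r\|(B_\rho)/\rho^n\bigr)^{1/p},
\]
which is below $\varepsilon$ once $\rho$ is small, uniformly in $r$. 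Second, the density ratio must be controlled. Fix $\rho$ small, depending only on $\Sigma$ and $\varepsilon$, so that
\[
\frac{\mathcal H^n(\Sigma\cap B_{\rho}(x))}{\omega_n\rho^n}\le 1+\frac{\varepsilon}{4}
\quad\text{at interior points } x\in\Sigma,
\]
with the analogue $\tfrac12(1+\varepsilon/4)$ at points of $\partial\Sigma$ in Fermi coordinates. Corollary~\ref{vrfdconvergence} gives $V_r\to|\Sigma|$. Since $\|V_r\|$ converges weakly to $\mathcal H^n\llcorner\Sigma$ and $\partial B_\rho$ carries no limit mass, compactness of $\Sigma$ lets us choose $r$ small so that the ratio bound $1+\varepsilon$ (resp. $\tfrac12(1+\varepsilon)$) holds for $V_r$ at every $y\in\operatorname{spt}\|V_r\|$. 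Because $\operatorname{spt}\|V_r\|\subset K_r$, each such $y$ is within $Cr$ of $\Sigma$.

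\textbf{Step 2: choosing interior or boundary regularity.} We split points by their distance to $\partial N$. If $\operatorname{dist}(y,\partial N)\ge\rho$, apply Theorem~\ref{thm:allard-interior} on $B_\rho(y)$. If $\operatorname{dist}(y,\partial N)<\rho$, apply Theorem~\ref{G} at the nearest boundary point with radius $2\rho$, adjusting constants; this is where the free-boundary first variation is used. Integer multiplicity $\theta\ge1$ holds because $T_r$ is a rectifiable $\mathbb Z_2$-current. In both cases Remark~\ref{rem:Riemannian-regularity} makes the constants uniform. The output is that $\operatorname{spt}\|V_r\|\cap B_{\gamma\rho}$ is a $C^{1,\alpha}$ graph over some plane, with small $C^{1,\alpha}$ norm.

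\textbf{Step 3: graphs over $T\Sigma$ and the global graph.} The local plane must be close to $T\Sigma$. The tilt-excess estimate (Lemma~\ref{tilt}), together with the uniform $C^{1,\alpha}$ bound, forces the tangent planes of the local graph to be uniformly close to $T_{\pi_r(y)}\Sigma$. So each local piece is a graph over $\Sigma$ in the $F$-coordinates, meaning $\pi_r$ restricted to it is a local diffeomorphism. It remains to show that the pieces glue to a single-valued graph. Since the mass ratio is at most $1+\varepsilon<2$ at scale $\gamma\rho$, each ball contains only one sheet. Hence $\pi_r|_{\operatorname{spt}T_r}$ is a proper covering map onto its image $\subset\Sigma$. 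By Lemma~\ref{Sigma=pirTr} it has mod-$2$ degree $1$ on each component, so the image is all of $\Sigma$. Mass convergence $\mathbf M(T_r)\to\mathcal H^n(\Sigma)$, combined with the Jacobian bound $J_n\pi_r\ge 1-o(1)$ on near-horizontal planes, rules out sheet number $\ge 2$, since it would give $\mathbf M(T_r)\gtrsim 2\mathcal H^n(\Sigma)$. Therefore $\operatorname{spt}T_r=\Gamma_{u_r}$ with $u_r=\tau\circ(\pi_r|_{\operatorname{spt}T_r})^{-1}$, taking values in $[-r,r]$. By the constancy theorem $T_r=\llbracket\Gamma_{u_r}\rrbracket$ modulo $A_r$. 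Finally, $\|u_r\|_{C^0}\le r$, and the gradient estimates with $\eta\to0$ give $u_r\to0$ in $C^1$.

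The main obstacle I expect is Step 1 near $\partial\Sigma$. Verifying the half-density ratio requires a uniform weak-convergence argument on balls centered at varying points, which may lie slightly off $\partial N$. The boundary of the graph must also lie on $\partial N$, which relies on item 4 of Theorem~\ref{G}. To handle the uniformity, I would argue by contradiction: take sequences $r_j\to0$ with bad points $y_j$, and pass to subsequences using $V_{r_j}\to|\Sigma|$ with limit point $y_\infty\in\Sigma$.
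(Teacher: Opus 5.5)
Your overall route matches the paper's. You verify the Allard and Gr\"uter--Jost hypotheses from the multiplicity-one convergence and the uniform first-variation bound. You then use tilt-excess to make the local graphs $C^1$-close to $\Sigma$. Finally you show $\pi|_{\operatorname{spt}T_r}$ is a covering, onto by Lemma~\ref{Sigma=pirTr} and one-sheeted by mass convergence. The place where the proposal fails as written is the boundary case of Step 2.

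\textbf{The boundary centering.} You apply Theorem~\ref{G} at the point $q\in\partial N$ nearest to $y$. That theorem needs its center to lie both on $\partial N$ and in $\operatorname{spt}\mu$; its conclusion includes $u(0)=0$. The point $q$ is in general not a support point, and a priori you do not even know that $\operatorname{spt}\|V_r\|$ reaches $\partial N$ near $y$. The missing observation is exactly how the paper chooses its centers. By Lemma~\ref{Sigma=pirTr} we have $\pi(\operatorname{spt}T_r)=\Sigma$, while $\pi^{-1}(\partial\Sigma)=F(\partial\Sigma\times[-r,r])=A_r\subset\partial N$. Hence for every $p\in\partial\Sigma$ there is a point $p_r\in\operatorname{spt}\|V_r\|\cap\partial N$ with $d(p_r,p)\le Cr$, and Theorem~\ref{G} should be applied at such a $p_r$.

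\textbf{The radii.} With threshold $\operatorname{dist}(y,\partial N)<\rho$ and Gr\"uter--Jost radius $2\rho$, the graphical conclusion only holds in the ball of radius $2\gamma\rho$ about the center. That ball need not contain $y$ when $\gamma<\tfrac12$. Take the near-boundary threshold to be a small multiple of $\gamma R$, where $R$ is the boundary radius. Then, using that $\Sigma$ meets $\partial N$ orthogonally, each such $y$ lies within $\gamma R$ of some $p_r$ as above. Treat the remaining points with Theorem~\ref{thm:allard-interior} at that smaller scale; your uniform weak-convergence argument controls the density ratios at any fixed scale.

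A minor point: a mass-ratio bound $1+\varepsilon$ at scale $\rho$ does not give a ratio $<2$ at scale $\gamma\rho$. You do not need it, because the regularity theorems already output a single graph in the good ball. With these repairs your argument goes through and coincides with the paper's proof.
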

\begin{proof}
We divide the proof into two steps.

\medskip
\noindent
\textbf{Step 1. Verification of the Regularity Hypotheses.}

The purpose of this step is to verify the hypotheses in Theorems \ref{thm:allard-interior} and \ref{G}.

\emph{a. Interior.} For an interior point $p\in\operatorname{int}\Sigma$, let $\varepsilon_{\mathrm A}>0,\,\gamma_{\mathrm A}\in(0,1)$ be the smallness threshold and the shrinking factor corresponds to $\eta_A$ in Allard's Regularity Theorem \ref{thm:allard-interior}. Choose $R_p>0$ sufficiently small such that $B_{2R_p}(p)\Subset N^\circ$, $\Sigma\cap B_{2R_p}(p)$ is $C^1$-close to $T_p\Sigma$ and $R_p<<\varepsilon_{\mathrm A}.$

By Lemma \ref{Sigma=pirTr}, \(\pi(\operatorname{spt}T_r)=\Sigma,\) hence we may choose \(p_r\in\operatorname{spt}\|V_r\|\cap \pi^{-1}(p)\) with \(d(p_r,p)\le Cr.\) Thus we have: \[ d(p_r,p)<\frac{1}{10}\gamma_{\mathrm A}R_p\qquad\text{and}\qquad B_{R_p}(p_r)\subset B_{2R_p}(p)\qquad\text{for \(r\) sufficiently small.}
  \]

The hypotheses of Allard's Regularity Theorem then follows locally in $B_{R_p}(p_r)$. Indeed, denote the affine plane through $p_r$ parallel to $T_p\Sigma$ as $P_r$, since \(\Sigma\) is smooth and \(p_r\to p\), \(\Sigma\) is arbitrarily close to this affine plane in \(B_{R_p}(p_r)\), after first choosing \(R_p\) small. Since \(V_r\to|\Sigma|\) as varifolds with multiplicity one convergence, the mass ratio of \(V_r\) in \(B_{R_p}(p_r)\) is as small as required for all sufficiently small \(r\), while the density and support condition naturally holds.

Furthermore, we have $|\delta V_r(X)|\le C_0\int |X|\,d\|V_r\|$ for all $X\in \mathfrak X^\partial(N)$ by assumption. Choose the normal coordinate centered at $p_r$: $\Psi_r:B_{2R_p}(p_r)\rightarrow B_{2R_p}(0)\subset\mathbb R^{n+1}$ and the rescale\[ \eta_{R_p}(z)=\frac{z}{R_p}, \qquad \widetilde V_r = (\eta_{R_p}\circ\Psi_r)_\#V_r.\] Under this scaling, the first variation bound becomes
\[
|\delta \widetilde V_r(Y)|
\le
C_0R_p\int |Y|\,d\|\widetilde V_r\|<\varepsilon_{\mathrm A}\int |Y|\,d\|\widetilde V_r\|
\]
for vector fields \(Y\) on the rescaled ball. Therefore the mean curvature is bounded by \( \varepsilon_{\mathrm A} \), and hence Theorem \ref{thm:allard-interior} then applies to $V_r$ in $B_{R_p}(p_r)$, therefore \( \operatorname{spt}\|V_r\|\cap B_{\gamma_{\mathrm A}R_p}(p_r) \) is a \(C^{1,\alpha_{\mathrm A}}\) graph. Consequently $\operatorname{spt}\|V_r\|\cap B_{\frac12\gamma_{\mathrm A}R_p}(p)$ is a \(C^{1,\alpha_{\mathrm A}}\) graph which is $C^1$- close to $\Sigma $ by the previous set and tilt-excess convergence in Section \ref{3.2}. Indeed, if $\operatorname{dist}_\mathcal G\bigl(T_{y}T_r,T_{\pi(y)}\Sigma\bigr)\ge \varepsilon_0$ for some $y$, then by local \(C^{1,\alpha_{\mathrm A}}\) controllness, for a sufficiently small ball the grassmannian distance is greater than $\varepsilon_0/2,$ which gives a positive local tilt-excess, contradicts to \eqref{eq: tiltexcess}.

\emph{b. Boundary.} For a boundary point \(p\in\partial\Sigma\), as in step 1.a, choose $\varepsilon_{\mathrm{GJ}},\,\eta_{\mathrm{GJ}}$ corresponds to \(\eta_{\mathrm{GJ}}\) in Gr\"uter--Jost's Regularity Theorem \ref{G} and similarly choose sufficiently small $R_p$ with extra boundary condition \(R_p\sup_{\partial N\cap B_{2R_p}(p)} |A_{\partial N}|\le\varepsilon_{\mathrm{GJ}}^2.\) Using the same argument again we obtain \(\operatorname{spt}\|V_r\|\cap B_{\frac12\gamma_{\mathrm{GJ}}R_p}(p)\) is a \(C^{1,\alpha_{\mathrm{GJ}}}\) free-boundary hypersurface which is \(C^1\)-close to \(\Sigma\). Note that at boundary we may choose Fermi coordinate instead of normal coordinate.

\medskip
\noindent
\textbf{Step 2. From local regularity to global graph.}

The regularity theorems give only local results. We must still show that these local graphs patch together into one global graph over \(\Sigma\), rather than several sheets.  We do not glue the local planar graph functions given by regularity theorems, instead, we use the tubular projection \(\pi:\mathcal U\to\Sigma. \)

Set \( \Gamma_r:=\operatorname{spt}\|V_r\|=\operatorname{spt}T_r. \) By step 1, for every \(p\in\Sigma\), there exists a neighborhood \(W_p\) of \(p\) such that, for \(r\) sufficiently small, \( \Gamma_r\cap W_p \) is a \(C^{1,\alpha}\) hypersurface, and is \(C^1\)-close to \(\Sigma\). Since \(\Sigma\) is compact, choose finitely many points \( p_1,\ldots,p_J\in\Sigma \) such that \( \Sigma\subset \bigcup_{i=1}^J W_{p_i}, \) after taking \(r\) smaller if necessary we also have \( \Gamma_r\subset \bigcup_{i=1}^J W_{p_i}. \) Thus the above local regularity covers the whole \(\Gamma_r\).

We now show that \( \pi|_{\Gamma_r}\) is a local diffeomorphism. Indeed, in the tubular coordinates \(F(x,s)\), the kernel of \(D\pi\) is precisely the line generated by \(Z\), i.e.
\[
\ker D\pi_y=\operatorname{span}\{Z(y)\}.
\]
Since \(\operatorname{dist}_\mathcal G\bigl(T_y\Gamma_r,T_{\pi(y)}\Sigma\bigr)\xrightarrow{r\to0}0\), and \(T_{\pi(y)}\Sigma\) is transverse to the \(Z\)-direction, \(T_y\Gamma_r\) is also transverse to the \(Z\)-direction. Hence
\[
D(\pi|_{\Gamma_r})_y:
T_y\Gamma_r\to T_{\pi(y)}\Sigma
\]
is an isomorphism for every \(y\in\Gamma_r\). Therefore \( \pi|_{\Gamma_r} \) is a local diffeomorphism.

Next we prove that \(\pi|_{\Gamma_r}\) is onto. By Lemma \ref{Sigma=pirTr},
\[
\Sigma=\operatorname{spt}\bigl(\llbracket\Sigma\rrbracket\bigr)=\operatorname{spt}\bigl((\pi)_\#T_r\bigr)
\subset
\pi(\operatorname{spt}T_r)
=
\pi(\Gamma_r),
\]
we obtain \( \Sigma\subset \pi(\Gamma_r). \) The reverse inclusion follows from the definition of \(\pi\). Thus \( \pi(\Gamma_r)=\Sigma. \)

Consequently, \( \pi|_{\Gamma_r}:\Gamma_r\to\Sigma \) is a proper surjective local diffeomorphism. Hence it is a finite-sheeted covering map. On each connected component \(\Sigma_j\) of \(\Sigma\), let \( m_j\in\mathbb N \) be the number of sheets over \(\Sigma_j\).

We claim that \( m_j=1\) for every connected component \(\Sigma_j\). Suppose by contradiction that \(m_j\ge2\) for some \(j\). Since each sheet is \(C^1\)-close to \(\Sigma_j\), the area of each sheet is \( (1+o(1))\mathcal H^n(\Sigma_j) \) for all sufficiently small $r.$ Hence
\[
\mathbf M(T_r)
=
\|V_r\|(N)
\ge
m_j(1+o(1))\mathcal H^n(\Sigma_j)
+
\sum_{\ell\ne j}(1+o(1))\mathcal H^n(\Sigma_\ell).
\]
If \(m_j\ge2\), then for all sufficiently small \(r\) this is strictly larger than \( \mathcal H^n(\Sigma). \) This contradicts the multiplicity-one varifold convergence \( V_r\to|\Sigma|, \) which implies \( \|V_r\|(N)\to\mathcal H^n(\Sigma). \) Therefore \( m_j=1 \) for every \(j\). Hence \( \pi|_{\Gamma_r}:\Gamma_r\to\Sigma \) is a one-sheeted covering map, and therefore a global diffeomorphism.

Define
\[
\sigma_r:\Sigma\to\Gamma_r \qquad\text{by }\sigma_r=(\pi|_{\Gamma_r})^{-1}
\]
\[
u_r:\Sigma\to[-r,r] \qquad\text{by }u_r(x)=\tau(\sigma_r(x)).
\]
Since \( \pi(\sigma_r(x))=x, \) the point \(\sigma_r(x)\) lies on the \(Z\)-flow line through \(x\). Hence there exists a unique \(s\in[-r,r]\) such that \( \sigma_r(x)=F(x,s). \) Since \( \tau(F(x,s))=s, \) this value of \(s\) is exactly \(u_r(x)\). Therefore \( \sigma_r(x)=F(x,u_r(x)). \) It follows that
\[
\Gamma_r
=
\operatorname{graph}({u_r})
=
\{F(x,u_r(x)):x\in\Sigma\}.
\]
Moreover \(u_r\) is locally \(C^{1,\alpha_\text{A}}\) in the interior and locally free-boundary \(C^{1,\alpha_\text{GJ}}\) at boundary.

Since \(T_r\) is a \(\mathbb Z_2\)-integral current supported on the hypersurface \(\Gamma_{r}\), one has \( T_r=\llbracket\Gamma_{r}\rrbracket \) as a \(\mathbb Z_2\)-current. Combining with step 1 gives $ u_r\xrightarrow{C^1(\Sigma)}0$, which completes the proof.

\end{proof}

\subsection{Local Minimizing Property}\label{3.5}
At this point the problem has been reduced to the classical graphical setting. The minimizer \(T_r\) is represented by a free-boundary graph \(\Gamma_{u_r}\) over \(\Sigma\), with \(u_r\to0\) in \(C^1\). We can therefore use the strict stability of \(\Sigma\), which gives strict local minimality for the area functional among small free-boundary graphs.
\begin{proposition}
Let \(\Sigma^n\subset (N^{n+1},g)\) be a smooth compact two-sided properly embedded free-boundary minimal hypersurface. Assume that \(\Sigma\) is strictly stable, namely its free-boundary second variation form
\[
Q(\phi,\phi)
=
\int_\Sigma
\left(
|\nabla^\Sigma\phi|^2
-
\bigl(|A_\Sigma|^2+\operatorname{Ric}_N(\nu,\nu)\bigr)\phi^2
\right)
d\mu_\Sigma
-
\int_{\partial\Sigma}
h^{\partial N}(\nu,\nu)\phi^2
d\sigma
\]
is positive definite.

Let \(\tau,F,K_r,A_r,T_r,V_r\) be same as in above sections. Suppose that, for \(r\to0\),
\[
T_r=\llbracket\Gamma_{u_r}\rrbracket,
\qquad
\Gamma_{u_r}=\{F(x,u_r(x)):x\in\Sigma\},
\qquad
u_r\to0
\quad\text{in }
C^1(\Sigma).
\]
Then, for all sufficiently small \(r\), \( T_r=\llbracket\Sigma\rrbracket. \) Consequently, \( \mathbf M(T)\ge \mathbf M(\llbracket\Sigma\rrbracket) \) for every \( T\in\mathcal Z_n(K_r,A_r;\mathbb Z_2) \) satisfying \( [T]=[\llbracket\Sigma\rrbracket]\) in \( H_n(K_r,A_r;\mathbb Z_2). \) Moreover, equality holds if and only if $T=\llbracket\Sigma\rrbracket.$ Thus \(\Sigma\) is locally area-minimizing in its relative homology class.
\end{proposition}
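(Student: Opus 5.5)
The plan is to reduce to a one-variable Taylor expansion of the area functional along the graphs \(\Gamma_{tu}\), \(t\in[0,1]\), and then use strict stability to rule out nonzero \(u_r\).

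Set \(\mathcal A(u):=\mathcal H^n(\Gamma_u)\) for \(u\in C^1(\Sigma)\) with \(\|u\|_{C^1}\) small, and \(u=u_r\). Since the boundaries \(\partial\Gamma_{tu}\) stay on \(\partial N\) (because \(F(\partial\Sigma\times(-r_0,r_0))\subset\partial N\)), the free-boundary first variation formula gives \(\mathcal A'(0)[u]=0\). Moreover, by \eqref{eq: zeta} the variation has normal speed \(\zeta u\). Hence the second derivative at \(t=0\) is \(Q(\zeta u,\zeta u)\) plus a term involving the acceleration \(\nabla_ZZ\). That term is a first-variation term tested against a vector field tangent to \(\partial N\), so it vanishes by stationarity of \(\Sigma\). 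Since \(\zeta>0\) is smooth and bounded below, strict stability and the coercivity estimate \(Q(w,w)\ge c_0\|w\|_{H^1}^2\) give
\[
\mathcal A''(0)[u,u]\ge c_1\|u\|_{H^1(\Sigma)}^2 .
\]

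To get the second-order Taylor formula with an \(H^1\)-controlled remainder, I would write \(\mathcal A(u)=\int_\Sigma \mathcal L(x,u,\nabla u)\,d\mu_\Sigma\). Here \(\mathcal L\) is the smooth area integrand obtained from \(\bar g=g_s+\lambda^2ds^2\); it is defined for small \(|u|,|\nabla u|\), with no boundary term, since the domain is always \(\Sigma\). The same Taylor expansion yields
\[
\mathcal A(u)-\mathcal A(0)=\tfrac12\mathcal A''(0)[u,u]+R(u),
\qquad
|R(u)|\le \omega\bigl(\|u\|_{C^1}\bigr)\int_\Sigma\bigl(u^2+|\nabla u|^2\bigr),
\]
with \(\omega(\delta)\to0\) as \(\delta\to0\). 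This holds because the third derivatives of \(\mathcal L\) are uniformly bounded on a \(C^1\)-small set.

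Because \(u_r\to0\) in \(C^1\), for small \(r\) we get
\[
\mathbf M(T_r)-\mathbf M(\Sigma)\ge (c_1/2-\omega)\|u_r\|_{H^1}^2\ge \tfrac{c_1}{4}\|u_r\|_{H^1}^2 .
\]
Minimality gives \(\mathbf M(T_r)\le\mathbf M(\Sigma)\), so \(u_r=0\), and therefore \(T_r=\llbracket\Sigma\rrbracket\). Then for any \(T\in\mathcal C_r\) we have \(\mathbf M(T)\ge m_r=\mathbf M(\Sigma)\).

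For the equality case, suppose \(T\in\mathcal C_r\) has \(\mathbf M(T)=\mathbf M(\Sigma)\). Then \(T\) is itself a minimizer. All of Section~\ref{3} (Lemmas~\ref{massconvergence}--\ref{tilt}, Proposition~\ref{1variation}, Proposition~\ref{convergence as graphs}) used only that the chosen current is a minimizer in \(\mathcal C_r\), and the constants are uniform. So the argument applies to any family of minimizers, and in particular to \(T\). To avoid a circular ``for \(r\) small depending on the family'', I would argue by contradiction. Suppose there are \(r_j\to0\) and minimizers \(T_j\ne\llbracket\Sigma\rrbracket\). Applying the convergence results to this sequence makes \(T_j\) graphs \(\Gamma_{u_j}\) with \(u_j\to0\) in \(C^1\), and the coercivity bound forces \(u_j=0\), a contradiction. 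Finally, for \(r<r'\), a minimizer in \(K_r\) with mass \(\mathbf M(\Sigma)\) is also a competitor in \(K_{r'}\) (the inclusion of pairs preserves the class). This fixes a single threshold \(r_0\) for all smaller \(r\).

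The main obstacle is the second-variation identification. One must check that the boundary contributions in \(\mathcal A''(0)\) reduce exactly to the \(h^{\partial N}(\nu,\nu)\) term of \(Q\), with no extra tangential acceleration terms. This relies on \(Z\) being tangent to \(\partial N\) and on \(\Sigma\) meeting \(\partial N\) orthogonally. I would verify it by computing the second derivative via the flow \(\Phi_{tu}\) extended as an ambient field tangent to \(\partial N\), and then invoking the standard free-boundary second variation formula.
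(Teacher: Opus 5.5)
Your argument is essentially the paper's. You write $\mathcal A(u)=\mathcal H^n(\Gamma_u)$ and use $D\mathcal A(0)=0$ and $D^2\mathcal A(0)[u,u]=Q(\zeta u,\zeta u)$. Strict stability and $\zeta>0$ give $H^1$-coercivity, the cubic remainder is bounded by $\|u\|_{C^1}\|u\|_{H^1}^2$, and $\mathbf M(T_r)\le\mathbf M(\Sigma)$ then forces $u_r\equiv0$. Two remarks follow: one flawed justification, and one place where you improve on the paper.

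\emph{The acceleration term does not vanish.} The identity $\mathcal A''(0)[u,u]=Q(\zeta u,\zeta u)$ is true, but your reason for dropping the acceleration term is wrong.
\begin{itemize}
\item The variation $\Gamma_{tu}$ is the flow of $X=(u\circ\pi)Z$, since $Z(\pi)=0$ and $Z(\tau)=1$. Its acceleration is $\nabla_XX=(u\circ\pi)^2\nabla_ZZ$.
\item $Z$ tangent to $\partial N$ does not make $\nabla_ZZ$ tangent to $\partial N$. Differentiating $\langle Z,\bar\nu\rangle=0$ along $\partial N$, with $\bar\nu$ the outward normal, gives $\langle\nabla_ZZ,\bar\nu\rangle=-h^{\partial N}(Z,Z)$.
\item Since $\Sigma$ is minimal and meets $\partial N$ orthogonally, $\delta\Sigma(\nabla_XX)=\int_{\partial\Sigma}\langle\nabla_XX,\eta\rangle=-\int_{\partial\Sigma}h^{\partial N}(\nu,\nu)\,\zeta^2u^2$. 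This is generally nonzero; it is exactly the boundary term of $Q(\zeta u,\zeta u)$.
\item The correct bookkeeping is $\mathcal A''(0)[u,u]=\int_\Sigma\bigl(|\nabla(\zeta u)|^2-(|A_\Sigma|^2+\operatorname{Ric}_N(\nu,\nu))\zeta^2u^2\bigr)+\delta\Sigma(\nabla_XX)=Q(\zeta u,\zeta u)$.
\end{itemize}
Free-boundary stationarity only annihilates the part of the acceleration tangent to $\partial N$. Either repair the argument this way, or invoke the standard free-boundary second variation formula for ambient fields tangent to $\partial N$, as you propose at the end. The paper simply asserts the identity.

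\emph{The equality case.} Your treatment is more careful than the paper's. The paper infers ``equality iff $T=\llbracket\Sigma\rrbracket$'' only from the fact that the chosen minimizer $T_r$ equals $\llbracket\Sigma\rrbracket$. That alone does not exclude a second minimizer, and the threshold in $r$ could depend on which minimizer was chosen. Your contradiction argument fixes this. You take arbitrary minimizers $T_j\neq\llbracket\Sigma\rrbracket$ in $K_{r_j}$ with $r_j\to0$. The mass, tilt, first-variation and graphicality results use only minimality, with constants uniform in $r$, so they apply to this sequence. This is what actually yields uniqueness. Your final monotonicity-in-$r$ step is redundant, because negating the contradiction hypothesis already gives a uniform threshold, but it does no harm.
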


\begin{proof}
First introduce the graph area functional
\[
\mathcal A(u)=\mathcal H^n(\Gamma_u),
\qquad
\Gamma_u=\{F(x,u(x)):x\in\Sigma\}.
\]
Since \(F(\partial\Sigma,s)\subset\partial N\), each \(\Gamma_u\) is an admissible graph competitor for \(\|u\|_{C^1}\) small.

Since \(\Sigma\) is a free-boundary minimal hypersurface, we have:
\begin{equation*}
D\mathcal A(0)[u]=0\qquad D^2\mathcal A(0)[u,u]=Q(\zeta u,\zeta u),\qquad\text{where }Z|_\Sigma=\langle Z,\nu_\Sigma\rangle \nu_\Sigma=:\zeta \nu_{\Sigma}.
\end{equation*}
Strict stability then implies that there exists \(c_0>0\) such that \( D^2\mathcal A(0)[u,u] \ge c_0\|u\|_{H^1(\Sigma)}^2. \)

The area integrand for \(\Gamma_u\) is a smooth function of \(u\) and \(\nabla u\). Hence, for \(\|u\|_{C^1}\) sufficiently small, Taylor expansion gives
\[
\mathcal A(u)-\mathcal A(0)
=
\frac12D^2\mathcal A(0)[u,u]+R(u),
\]
with \( |R(u)| \le C\|u\|_{C^1}\|u\|_{H^1(\Sigma)}^2. \) Therefore, after decreasing the \(C^1\)-neighborhood of \(0\), we obtain
\[
\mathcal A(u)-\mathcal A(0)
\ge
c_1\|u\|_{H^1(\Sigma)}^2
\]
for some \(c_1>0\).

Now apply this to \(u=u_r\). Since \( u_r\xrightarrow{C^1}0, \) for \(r\) sufficiently small the above estimate applies. Hence
\[
\mathbf M(\llbracket\Sigma\rrbracket)\ge\mathbf M(T_r)
=
\mathcal H^n(\Gamma_{u_r})
=
\mathcal A(u_r)
\ge
\mathcal A(0)
+
c_1\|u_r\|_{H^1(\Sigma)}^2=\mathbf M(\llbracket\Sigma\rrbracket)+
c_1\|u_r\|_{H^1(\Sigma)}^2.
\]
This gives \( c_1\|u_r\|_{H^1(\Sigma)}^2\le0. \) Thus \( u_r\equiv0 \) for sufficiently small $r$. Hence \( \Gamma_{u_r}=\Sigma,\, T_r=\llbracket\Sigma\rrbracket. \)

Finally, since \(T_r\) was chosen as a mass minimizer among all currents \( T\in\mathcal Z_n(K_r,A_r;\mathbb Z_2) \) with \( [T]=[\llbracket\Sigma\rrbracket], \) we obtain, for every such \(T\), \( \mathbf M(T)\ge \mathbf M(\llbracket\Sigma\rrbracket) \) with equality holds if and only if $T=\llbracket\Sigma\rrbracket.$ This proves that \(\Sigma\) is unique local mass minimizer in its relative homology class. This completes the proof of Theorem~\ref{main}.
\end{proof}

\section{Proof of Theorem \ref{main2baby}}
\label{proof of second result}
Throughout this section, we further assume that $\Sigma$ is orientable and has positive index $k$. We fix an orientation of \(\Sigma\), and denote by \([\![\Sigma]\!]\) the associated integral relative current. The proof would be divided into two subsections, and we start with a restatement of the local $k$--parameter min--max characterization, using the notations in above sections:

\begin{theorem}\label{main22}
Let \((N^{n+1},g)\) be a compact Riemannian manifold with boundary. Let \(\Sigma^n\subset N\) be a compact, orientable, two-sided, properly embedded free-boundary minimal hypersurface.
Further assume that $\Sigma$ is nondegenerate and has positive Morse index $k$, i.e.
\[
\operatorname{index}(\Sigma)=k,
\qquad
\operatorname{nullity}(\Sigma)=0 .
\]
Then there exist \(r_0>r>0\), \(\rho>0\), and a free-boundary adapted tubular parametrization
\[
F:\Sigma\times(-r_0,r_0)\to N,\qquad K_r=F(\Sigma\times[-r,r]),
\qquad
A_r=K_r\cap\partial N .
\]
such that the following holds:
\begin{enumerate}
\item There exists a $k$-parameter family $\gamma_a$, where $a\in B_\rho^k$, of free--boundary graphs over $\Sigma$, with: \begin{itemize}
\item $\gamma_a$ homologous to $\Sigma$ for each $a.$
\item $\gamma_0=\Sigma.$
\item \(\mathbf M(\gamma_a)\le\mathbf M(\Sigma)-c|a|^2\)  for some constant $c>0$.

  \end{itemize}
\item For every $\mathcal F_{rel}$-continuous map \( \Phi:\overline{B_\rho^k}\to \mathcal Z_n(K_r,A_r;\mathbb Z) \) with \( [\Phi(a)]=[\llbracket\Sigma\rrbracket]\) in \(H_n(K_r,A_r;\mathbb Z)\) for each $a,$ and \( \Phi|_{\partial B_\rho^k}=\gamma|_{\partial B_\rho^k}, \) one has
\[
\sup_{a\in\overline{B_\rho^k}}\mathbf M(\Phi(a))
\ge
\mathbf M(\Sigma),
\]
with strict inequality unless $\Phi(a)=\llbracket\Sigma\rrbracket$ for some $a$.
\end{enumerate}

\end{theorem}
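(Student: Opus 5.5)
Part 1 is built from the negative eigenspace. Let $E_-=\operatorname{span}\{\varphi_1,\dots,\varphi_k\}$ with eigenfunctions smooth up to $\partial\Sigma$. For $a\in\mathbb R^k$, set $v_a:=\zeta^{-1}\sum_i a_i\varphi_i$, where $\zeta$ is as in \eqref{eq: zeta}, and take $\gamma_a:=\llbracket\Gamma_{v_a}\rrbracket$ with $\Gamma_{v_a}=\{F(x,v_a(x))\}$. Since $F(\partial\Sigma\times(-r_0,r_0))\subset\partial N$, each $\gamma_a$ is a free-boundary graph. The homotopy $F(x,(1-s)v_a(x))$ preserves $A_r$, so the argument of Lemma \ref{flatconvergence} shows $\gamma_a$ is homologous to $\llbracket\Sigma\rrbracket$ in $(K_r,A_r)$ once $\rho\|v\|_{C^0}<r$. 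Clearly $\gamma_0=\Sigma$.

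By the Taylor expansion of Section \ref{3.5},
\[
\mathcal A(u)=\mathcal A(0)+\tfrac12Q(\zeta u,\zeta u)+R(u),\qquad |R(u)|\le C\|u\|_{C^1}\|u\|_{H^1}^2 .
\]
Together with $Q\le -c_0\|\cdot\|_{H^1}^2$ on $E_-$, this gives $\mathbf M(\gamma_a)\le\mathbf M(\Sigma)-c|a|^2$ for $\rho$ small. Choose $r$ as well, with $\rho$ small relative to $r$ as above.

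For Part 2, I would argue by contradiction. Suppose $\Phi$ has $\sup\mathbf M(\Phi(a))<\mathbf M(\Sigma)$, or more generally suppose the strictness claim fails. The plan has two steps. First, reduce each slice to a graph. Second, use a degree argument to find a slice with no unstable component, and then apply coercivity on $E_+$.

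Graphical reduction cannot be applied to arbitrary $\Phi(a)$, so I would mimic White's approach: replace each $\Phi(a)$ by a \emph{constrained minimizer}. Define the $\mathbb Z$-linear functional $\ell(T):=\bigl(\langle T,\omega_i\rangle\bigr)_{i=1}^k$, with $\omega_i:=\pi_r^*(\zeta\varphi_i\,d\mathcal H^n\llcorner\Sigma)$ or a similar closed-form pairing against the tubular height. On graphs it equals, to leading order, the $L^2$-projection of $\zeta u$ onto $E_-$. Let $\hat T_b$ minimize mass in $\mathcal C_r\cap\{\ell=b\}$. The analogues of Lemmas \ref{Sigma=pirTr}--\ref{tilt} and Proposition \ref{1variation} should hold for these constrained minimizers. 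The first variation now acquires a Lagrange multiplier term, bounded by $C|b|$, so it remains uniformly bounded. Proposition \ref{convergence as graphs} then makes $\hat T_b$ a $C^1$-small graph $\Gamma_{u_b}$ with $E_-$-component fixed. Restricting to $b=0$, the height $\zeta u_0\in E_+$ up to higher order. By coercivity of $Q$ on $E_+$, $\mathcal A(u_0)\ge\mathcal A(0)+c\|u_0\|^2_{H^1}$, which forces $\hat T_0=\Sigma$. Hence every $T\in\mathcal C_r$ with $\ell(T)=0$ satisfies $\mathbf M(T)\ge\mathbf M(\Sigma)$, with equality only for $\Sigma$. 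This is the modified version of Theorem \ref{main1precise}.

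It remains to find $a$ with $\ell(\Phi(a))=0$. The map $a\mapsto\ell(\Phi(a))$ is continuous in $\mathcal F_{\rm rel}$, because the $\omega_i$ are smooth forms and mass is bounded (I would need $d\omega_i$-type control, handled via pairing against the homotopy filling). On $\partial B^k_\rho$ it equals $\ell(\gamma_a)$, which is approximately a positive definite linear multiple of $a$, so it is nonvanishing with degree $1$. Degree theory yields $a^*$ with $\ell(\Phi(a^*))=0$. Then $\mathbf M(\Phi(a^*))\ge\mathbf M(\Sigma)$, with equality only if $\Phi(a^*)=\llbracket\Sigma\rrbracket$.

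The main obstacle is the constrained minimization. One must show that the constraint $\ell=b$ is closed under the compactness theorem, and that minimizers still have a uniformly bounded first variation, i.e. a Lagrange multiplier estimate. I would try to obtain the latter by perturbing competitors with $P_r\circ\psi_t$ plus a correction along the finitely many $\varphi_i$-graph directions that restores $\ell$. A second obstacle is making $\ell$ precisely correspond to the $E_-$-component of the height for nonlinear graphs. This can be handled by a finite-dimensional implicit function argument.
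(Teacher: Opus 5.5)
Your Part 1 is the paper's construction. Your Part 2 has the paper's skeleton: graphical reduction of a minimizer, a degree argument producing a slice with vanishing unstable coordinates, then coercivity on $E_+$. There are two gaps in Part 2.

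\textbf{The uniform first variation bound for $\hat T_0$ is missing.} This is the step that makes the graphical reduction possible, and the mechanisms you sketch for it do not work as stated.
\begin{enumerate}
\item The Lagrange multiplier is not bounded by $C|b|$. That is the coefficient $2\Lambda s$ of a penalty method. At $b=0$ your bound would make $\hat T_0$ stationary, which is not known a priori.
\item Bounding the multiplier uniformly in $r$ requires correcting deformations that map $(K_r,A_r)$ into itself and act on $\ell$ in a uniformly invertible way at $\hat T_0$. Flows along $\zeta^{-1}\varphi_jZ$ cannot be shown to do this.
\begin{itemize}
\item If you compose them with $P_r$, the clipping kills the outward response of $\ell$ coming from mass lying on $\Sigma_{\pm r}$, and $\sigma\mapsto\ell$ becomes only Lipschitz.
\item If you cut them off near $\Sigma_{\pm r}$, the constraint matrix $\hat T_0\bigl(\chi(\tau)\pi^*(\varphi_i\varphi_j\omega_\Sigma)\bigr)$ can degenerate when $\hat T_0$ carries mass near the top or bottom leaves. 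Multiplicity-one varifold convergence says nothing about how mass is distributed in height inside the shrinking $K_r$.
\end{itemize}
\end{enumerate}
Without this bound, neither Allard nor Gr\"uter--Jost can be invoked.

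\textbf{How the paper avoids it.} It uses penalization instead of a constraint. It minimizes $\mathbf M^*=\mathbf M+\Lambda|s|^2$ with $\Lambda>\max_i(-\lambda_i/2)$, which makes $\Sigma$ a strict $\mathbf M^*$-minimizer among small graphs. The $P_r\circ\psi_t$ competitors are then admissible as they stand. The only extra term, $\bigl||s(f_{t\#}T)|^2-|s(T)|^2\bigr|\le C|t|\int|X|\,d\|T\|+o(t)$, comes from the homotopy formula because $d\eta_i$ is bounded. Degree theory then gives $a_0$ with $s(\Phi(a_0))=0$, where $\mathbf M=\mathbf M^*\ge\mathbf M(\Sigma)$.

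\textbf{A correction that would rescue the hard constraint.} Take $\Theta_\sigma(F(x,s))=F(x,\kappa s+h_\sigma(x))$ with $h_\sigma=\zeta^{-1}\sum_j\sigma_j\varphi_j$ and $\kappa=1-\|h_\sigma\|_\infty/r$.
\begin{itemize}
\item It preserves $(K_r,A_r)$.
\item Its $n$-Jacobian is at most $1+C|\sigma|$ with $C$ independent of $r$, since the vertical compression does not increase area.
\item Because $\pi_\#S=\llbracket\Sigma\rrbracket$, it satisfies exactly $\ell(\Theta_{\sigma\#}S)=\kappa\,\ell(S)+\sigma$.
\end{itemize}
Solving $\sigma=-\kappa\,\ell(f_{t\#}\hat T_0)$ along a ray gives $|\sigma|\le C|t|\int|X|\,d\|\hat T_0\|+o(t)$. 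That restores the constraint at admissible cost.

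\textbf{Your literal functional is constant on the class.} The form $\omega_i=\pi_r^*(\zeta\varphi_i\,d\mathcal H^n\llcorner\Sigma)$ is closed and vanishes on $TA_r$. By Lemma~\ref{Sigma=pirTr}, $T(\omega_i)=\bigl((\pi_r)_\#T\bigr)(\zeta\varphi_i\omega_\Sigma)=\int_\Sigma\zeta\varphi_i$ for every $T\in\mathcal C_r$. So $\ell(\gamma_a)$ is independent of $a$ and the degree argument collapses; any closed form has this defect. You need the paper's height-weighted, non-closed form $\eta_i=\tau\,\pi^*(\zeta\varphi_i\omega_\Sigma)$, for which $\ell(\llbracket\Gamma_u\rrbracket)=\int_\Sigma\zeta u\,\varphi_i$ exactly. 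Then:
\begin{itemize}
\item $\ell(\gamma_a)=a$;
\item $\ell=0$ means $\zeta u\perp_{L^2}E_-$, so no implicit-function argument is needed;
\item $\ell$ is $\mathcal F_{\mathrm{rel}}$-Lipschitz, since $|T(\eta)-S(\eta)|\le\|\eta\|_\infty\mathbf M(P)+\|d\eta\|_\infty\mathbf M(Q)$ whenever $T-S=P+\partial Q+R$.
\end{itemize}
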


\begin{remark}
Under the two-sided assumption, $\Sigma$ is automatically orientable if $N$ is orientable. 
\end{remark}

\subsection{The Standard Unstable \texorpdfstring{$k$}{k}-Parameter Family}\label{sec: standart kfamily}
In this subsection, we construct a standard unstable $k$-parameter family, which completes the first half of Theorem \ref{main22}.

Let \( E_-=\operatorname{span}\{\varphi_1,\ldots,\varphi_k\} \) be the negative eigenspace of the free-boundary Jacobi operator, where \( \varphi_1,\ldots,\varphi_k \) are \(L^2\)-orthonormal basis on \(\Sigma\). Thus \( \int_\Sigma \varphi_i\varphi_j\,d\mu_\Sigma=\delta_{ij}, \) and the second variation form \(Q\) is negative definite on \( E_-. \)

Let \(\omega_\Sigma\) be the oriented volume form on \(\Sigma\). For \(i=1,\ldots,k\), define the relative \(n\)-form
\[
\eta_i:=\tau\pi^*(\zeta\varphi_i\omega_\Sigma)
\]
on \(K_r\). Since \(F(\partial\Sigma\times[-r,r])\subset \partial N\), we have \( D\pi(TA_r)\subset T\partial\Sigma. \) Hence \( \eta_i|_{TA_r}=0. \) Therefore \(\eta_i\) induces a well-defined functional on relative integral currents in \((K_r,A_r)\).

For \( T\in \mathcal Z_n(K_r,A_r;\mathbb Z), \) define
\[
s(T):=(s_1(T),\ldots,s_k(T))\in\mathbb R^k,\qquad\text{where }s_i(T):=(T-[\![\Sigma]\!])(\eta_i).
\]
Since \(\tau=0\) on \(\Sigma\), we have \( [\![\Sigma]\!](\eta_i)=0, \) and hence \( s_i(T)=T(\eta_i). \)

Now choose \(\rho>0\) sufficiently small so that, for every \( a=(a_1,\ldots,a_k)\in \overline B_\rho^k\subset \mathbb R^k, \) the function \( u_a:=\zeta^{-1}\sum_{i=1}^k a_i\varphi_i \) satisfies \( |u_a|<r. \) Denote $v_a=\zeta u_a$ and define
\[
\Gamma_a:=\{F(x,u_a(x)):x\in\Sigma\}.
\]
We orient \(\Gamma_a\) by requiring that \( \pi|_{\Gamma_a}:\Gamma_a\to\Sigma \) is orientation-preserving, and set \( \gamma(a)=\gamma_a:=[\![\Gamma_a]\!]. \) Then \( \gamma:\overline B_\rho^k\to \mathcal Z_n(K_r,A_r;\mathbb Z) \) is the standard unstable \(k\)-disk associated with \(\Sigma\).

For every \(a\in\overline B_\rho^k\), one has \( [\gamma_a]=[\llbracket\Sigma\rrbracket]\) in \(H_n(K_r,A_r;\mathbb Z). \) Moreover, \( s_i(\gamma_a)=a_i\) for \(i=1,\ldots,k. \) Indeed, denote \( F_a(x):=F(x,u_a(x)) \) as the graph function on $\Sigma,$ then
\[
F_a^*\tau=u_a,
\qquad
F_a^*\pi^*(\varphi_i\omega_\Sigma)=\varphi_i\omega_\Sigma.
\]
Hence
\[
s_i(\gamma_a)
=
\int_{\Gamma_a}\eta_i
=
\int_\Sigma F_a^*\eta_i
=
\int_\Sigma \zeta u_a\varphi_i\,d\mu_\Sigma
=
\int_\Sigma \left(\sum_{j=1}^k a_j\varphi_j\right)\varphi_i\,d\mu_\Sigma
=
a_i.
\]
Therefore \(s(\gamma_a)=a. \)

Finally, by the expansion at the free-boundary minimal hypersurface \(\Sigma\),
\[
\mathbf M(\gamma_a)
=
\mathbf M(\Sigma)
+
\frac12 Q(v_a,v_a)
+
O(|a|^3).
\]
Since \(Q\) is negative definite on \(E_-\), after decreasing \(\rho>0\) if necessary, there exists \(c>0\) such that \( \mathbf M(\gamma_a) \le \mathbf M(\Sigma)-c|a|^2 \) for all \(a\in\overline B_\rho^k\). In particular, \( \mathbf M(\gamma_a)<\mathbf M(\Sigma)\) for all \(a\in\partial B_\rho^k. \)

\subsection{Modified Mass and Modified Local Minimality Lemma}
In this subsection, we will show that every filling of the boundary condition $\gamma|_{\partial B_\rho^k}$ must reach $\mathbf M(\Sigma)$, that would complete the proof.

We write the corresponding eigenvalues of the second variation form as \( Q(\varphi_i,\varphi_j)=\lambda_i\delta_{ij},\) where \( \lambda_i<0\) for \(i=1,\ldots,k. \) Since the nullity of \(\Sigma\) is zero, the quadratic form \(Q\) is positive on the \(L^2\)-orthogonal complement of \(E_-\). Thus there exists \(c_+>0\) such that
\[
Q(h,h)\ge c_+\|h\|_{H^1(\Sigma)}^2\qquad\text{for }h\perp_{L^2}E_-.
\]

Let \(u\) be a sufficiently small graph function over \(\Sigma\) and $v=:\zeta u$. We decompose
\[
v=v_-+v_+,
\qquad\text{where }
v_-=\sum_{i=1}^k a_i\varphi_i,
\quad
v_+\perp_{L^2}E_-.
\]
By previous definition, one has \( s_i([\![\Gamma_u]\!]) = \int_\Sigma v\varphi_i\,d\mu_\Sigma = a_i. \) Hence \( |s([\![\Gamma_u]\!])|^2 = \sum_{i=1}^k a_i^2. \) Thus we could choose a constant \(\Lambda>0\) such that \( \Lambda> \max_{1\le i\le k} \left(-\frac12\lambda_i\right), \) and hence \( \Lambda+\frac12\lambda_i>0\) for all \( i=1,\ldots,k. \) By the expansion of area functional and the decomposition, we have:
\begin{align*}
  \mathbf M([\![\Gamma_u]\!])
  &=\mathbf M([\![\Sigma]\!])+\frac12 Q(v,v)+o(\|v\|_{H^1(\Sigma)}^2)\\
  &=\mathbf M([\![\Sigma]\!])+\frac12\big(\sum_{i=1}^k \lambda_i a_i^2+Q(v_+,v_+)\big)+\Lambda |s([\![\Gamma_u]\!])|^2-\Lambda |s([\![\Gamma_u]\!])|^2+o(\|v\|_{H^1(\Sigma)}^2)\\
  &\ge\mathbf M([\![\Sigma]\!])+c_\Lambda\bigl(\sum_{i=1}^k a_i^2\bigr)+c_+\|v_+\|_{H^1(\Sigma)}^2-\Lambda |s([\![\Gamma_u]\!])|^2+o(\|v\|_{H^1(\Sigma)}^2)\\
  &\ge\mathbf M([\![\Sigma]\!])+c_{E_-}\|v_-\|_{H^1(\Sigma)}^2+c_+\|v_+\|_{H^1(\Sigma)}^2-\Lambda |s([\![\Gamma_u]\!])|^2+o(\|v\|_{H^1(\Sigma)}^2)\\
  &\ge\mathbf M([\![\Sigma]\!])+c\|v\|_{H^1(\Sigma)}^2+o(\|v\|_{H^1(\Sigma)}^2)-\Lambda |s([\![\Gamma_u]\!])|^2.
\end{align*}
Note that the second inequality above comes from the norm equivalence on a finite dimensional space $E_-$, while the third inequality comes from the triangle inequality. Equivalently, define the modified mass functional by \( \mathbf M^*(T):=\mathbf M(T)+\Lambda |s(T)|^2, \) then the inequality becomes
\[
\mathbf M([\![\Gamma_u]\!])+\Lambda |s([\![\Gamma_u]\!])|^2-\mathbf M([\![\Sigma]\!])=\mathbf M^*([\![\Gamma_u]\!])-\mathbf M^*([\![\Sigma]\!])
\ge
c\|v\|_{H^1(\Sigma)}^2\ge c'\|u\|_{H^1(\Sigma)}^2
\] after absorbing the higher order term in a sufficiently small neighborhood of $\Sigma$. In particular, equality holds if and only if $u\equiv0,$ thus \([\![\Sigma]\!]\) is the strict local minimizer of \(\mathbf M^*\) among sufficiently small admissible graphs over \(\Sigma\). 

The key point of our argument is to prove that this graphical minimality property actually holds for all nearby relative integral currents in the same relative homology class. More precisely, we shall prove the following modified local minimality lemma:
\begin{lemma}\label{Modified local minimality}
For some sufficiently small $r$, and for every $T\in\mathcal Z_n(K_r,A_r;\mathbb Z)$ with $[T]=[\llbracket\Sigma\rrbracket]\in H_n(K_r,A_r;\mathbb Z)$, we have: 
\[
\mathbf M^*(T)\ge \mathbf M^*([\![\Sigma]\!])=
\mathbf M(\Sigma).
\]
Equality holds if and only if $T=\llbracket\Sigma\rrbracket$. In particular, if \( s(T)=0, \) then \( \mathbf M(T)\ge \mathbf M(\Sigma). \)
\end{lemma}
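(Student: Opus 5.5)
The proof will repeat the scheme of Section~\ref{3}, with the mass $\mathbf M$ replaced by the modified mass $\mathbf M^*=\mathbf M+\Lambda|s|^2$.

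\textbf{Step 1: existence of minimizers.} For each small $r$, minimize $\mathbf M^*$ over the class $\mathcal C_r$ of $T\in\mathcal Z_n(K_r,A_r;\mathbb Z)$ with $[T]=[\llbracket\Sigma\rrbracket]$. Each $s_i(T)=T(\eta_i)$ is evaluation against a fixed smooth form, so it is continuous under flat (weak) convergence. Mass is lower semicontinuous. Hence $\mathbf M^*$ is lower semicontinuous, and compactness of integral relative currents with bounded mass gives a minimizer $T_r$.

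\textbf{Step 2: convergence of the minimizers.} The key new observation is that $|\eta_i|\le C|\tau|\le Cr$ on $K_r$. Therefore
\[
|s(T)|\le Cr\,\mathbf M(T),\qquad\text{so}\qquad \mathbf M^*(T)=\mathbf M(T)+O(r^2)\,\mathbf M(T)^2 .
\]
Comparing with $\llbracket\Sigma\rrbracket$, where $s=0$, gives $\mathbf M(T_r)\le\mathbf M^*(T_r)\le\mathbf M(\Sigma)$.

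Lemma~\ref{Sigma=pirTr} holds verbatim over $\mathbb Z$, using the orientation chosen for $\Sigma$. It yields $(\pi_r)_\#T_r=\llbracket\Sigma\rrbracket$. Combined with Lemma~\ref{Jestimate}, this gives the lower bound $\mathbf M(T_r)\ge \mathbf M(\Sigma)/(1+Cr)$. The arguments of Lemmas~\ref{massconvergence}, \ref{flatconvergence}, \ref{tilt} and Corollary~\ref{vrfdconvergence} then go through unchanged. We conclude that $T_r\to\llbracket\Sigma\rrbracket$ in $\mathcal F_{\mathrm{rel}}$ and $|T_r|\to|\Sigma|$ with multiplicity one.

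\textbf{Step 3: uniform first variation bound.} This is the step where care is needed. Let $X\in\mathfrak X^\partial(N)$ and $f_t=P_r\circ\psi_t$. By Lemma~\ref{competitor}, $(f_t)_\#T_r$ is a competitor in $\mathcal C_r$. Minimality of $\mathbf M^*$ gives
\[
\mathbf M(T_r)\le \mathbf M((f_t)_\#T_r)+\Lambda\bigl(|s((f_t)_\#T_r)|^2-|s(T_r)|^2\bigr).
\]
We then control the change in $s$:
\[
s_i((f_t)_\#T_r)-s_i(T_r)=T_r(f_t^*\eta_i-\eta_i).
\]
For $t>0$ the map $f_t$ is only Lipschitz, but it is a homotopy of pairs. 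Apply the homotopy formula with $H$ as in \eqref{eq: homotopyPr}. The term supported in $A_r$ drops out because $\eta_i|_{TA_r}=0$ and $d\eta_i|_{A_r}$ kills tangential $(n+1)$-vectors. What remains is $\int H^*d\eta_i$ over $[0,1]\times T_r$, whose velocity factor is at most $C|t||X|$. This should give
\[
|s_i((f_t)_\#T_r)-s_i(T_r)|\le C|t|\int|X|\,d\|T_r\|.
\]
Multiplying by $|s(T_r)|\le Cr$, the $\Lambda$-term contributes at most $Cr|t|\int|X|\,d\|V_r\|+O(t^2)$. Combined with Lemma~\ref{Jf}, dividing by $t$ and letting $t\to0^\pm$ yields
\[
|\delta V_r(X)|\le C\int|X|\,d\|V_r\|,
\]
uniformly in $r$. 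The main obstacle I anticipate is justifying this pull-back and homotopy estimate for the Lipschitz retraction $P_r$. If the direct estimate becomes awkward, I would instead use that $|X|$ enters only through the $\tau$-velocity, exactly as in Lemma~\ref{Jf}.

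\textbf{Step 4: graphical reduction and conclusion.} With Step 3 in hand, Proposition~\ref{convergence as graphs} applies; its proof only used multiplicity-one convergence and the first variation bound. It shows $T_r=\llbracket\Gamma_{u_r}\rrbracket$ with $u_r\to0$ in $C^1$; the integer multiplicity is one by the same sheet-counting argument, and the orientation agrees with that of $\Sigma$ via $(\pi_r)_\#T_r=\llbracket\Sigma\rrbracket$. The graphical coercivity estimate preceding the lemma then gives
\[
\mathbf M(\Sigma)\ge\mathbf M^*(T_r)\ge\mathbf M(\Sigma)+c'\|u_r\|_{H^1}^2,
\]
so $u_r\equiv0$ and $T_r=\llbracket\Sigma\rrbracket$. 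Hence every $T\in\mathcal C_r$ satisfies $\mathbf M^*(T)\ge\mathbf M(\Sigma)$. If equality holds, $T$ is itself a minimizer, and rerunning Steps 2–4 with $T$ in place of $T_r$ gives $T=\llbracket\Sigma\rrbracket$. The case $s(T)=0$ is immediate, since then $\mathbf M(T)=\mathbf M^*(T)\ge\mathbf M(\Sigma)$.
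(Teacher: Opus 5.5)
Your proposal is correct and follows essentially the same route as the paper. Both arguments minimize $\mathbf M^*$ in $K_r$ and rerun the projection, mass, tilt and varifold convergence of Section~\ref{3.2} over $\mathbb Z$. Both then control the change of $s$ under $P_r\circ\psi_t$ by the homotopy formula, where the $A_r$-term vanishes because $\eta_i|_{TA_r}=0$ and the remaining $Q_t(d\eta_i)$ is bounded by $C|t|\int|X|\,d\|T_r\|$, and conclude by graphical reduction plus coercivity of $\mathbf M^*$ on small graphs. The only difference is cosmetic: you bound $|s(T_r)|\le Cr\,\mathbf M(T_r)$ using $|\eta_i|\le C|\tau|$, while the paper uses $|s(\widehat T_r)|\le\sqrt{\mathbf M(\Sigma)/\Lambda}$, and either gives the uniform first-variation constant.
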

\begin{proof}
We repeat the argument in Section \ref{3} with slight modifications and retain signs and notations. Suppose the conclusion fails, then we directly choose a minimizer $\widehat T_r$ of modified mass $\mathbf M^* $ in $K_r,$ where existence guaranteed by the $\mathcal F_{rel}$--continuity of $s.$ Then the result of Section \ref{3.2} (homology invariant under projection, mass convergence, current convergence, multiplicity one varifold convergence) still holds in coefficient $\mathbb Z.$

For the uniform first variation bound in Section~\ref{3.3}, we only need to additionally show that \[ \left||s(f_{t\#} \widehat T_r)|^2-|s(\widehat T_r)|^2\right|\le Ct\int |X|d\|\widehat V_r\|+o(t) \] for $t$ sufficiently small. We prove this inequality as a claim.

\begin{proof}[Proof of claim]
Recall the homotopy retraction \eqref{eq: homotopyPr} and using the homotopy formula would give:
  \[
    f_{t\#}\widehat T_r-\widehat T_r=\partial \big((H_t)_{\#}(\llbracket[0,1]\rrbracket\times \widehat T_r)\big)+(H_t)_{\#}(\llbracket[0,1]\rrbracket\times \partial \widehat T_r):=\partial Q_t+R_t.
    \]
Since $\eta_i|_{TA_r}=0,$ we have:
\[
  s_i(f_{t\#}\widehat T_r)-s_i(\widehat T_r)=(f_{t\#}\widehat T_r-\widehat T_r)(\eta_i)=\partial Q_t(\eta_i)+R_t(\eta_i)= Q_t(d\eta_i), 
\]
where all $\eta_i$ are defined in fixed $K_{r_0}$ and $d\eta_i=d(\tau\pi^*(\zeta \phi_i\omega_{\Sigma}))=d\tau\wedge\pi^*(\zeta \phi_i\omega_{\Sigma}).$ Thus $\|d\eta_i\|_{\infty}\le C, $ where $C$ independent of $r,$ and hence $|s_i(f_{t\#}\widehat T_r)-s_i(\widehat T_r)|<C\mathbf M(Q_t)$.

On the other hand, by our construction of $P_r$ in Section \ref{3.1},\[ |\partial_\lambda H_t(\lambda,y)|=|tDP_r\bigl(X(\psi_{\lambda t}(y))\bigr)|\le C|t||X(\psi_{\lambda t}(y))|.
  \]
For spatial direction, we also have $|D_yH_t|=|DP_r(\psi_{\lambda t}(y))\circ D\psi_{\lambda t}(y)|\le C.$ Consequently we have $J_{n+1}H_t\big|_{\mathbb R\oplus\operatorname{Tan}(T,y)}\le C|t||X(\psi_{\lambda t}(y))|.$ Combine this with area formula would give \( \mathbf M(Q_t) \le C|t| \int |X|\,d\|\widehat T_r\|. \) Thus, denote $\Delta_t s=s(f_{t\#}\widehat T_r)-s(\widehat T_r)$ would have $|\Delta_t s|\le C|t|\int |X|\,d\|\widehat T_r\|.$ By our choice of $\widehat T_r,$ $|s(\widehat T_r)|\le \sqrt{\frac{\mathbf M(\Sigma)}{\Lambda}},$ and hence \begin{align*} \left||s(f_{t\#}\widehat T_r)|^2-|s(\widehat T_r)|^2\right|&\le 2|s(\widehat T_r)||\Delta_t s|+|\Delta_t s|^2\le C|t|\int |X|d\|\widehat T_r\|+Ct^2(\int |X|d\|\widehat T_r\|)^2\\ &= C|t|\int |X|d\|\widehat T_r\|+o(|t|).
      \end{align*}
This concludes the claim.\end{proof}

With the uniform variation bound, the argument of Section \ref{3.4} then applies with only notation changes and forces each minimizer $\widehat T_r$ to become a graph over $\Sigma. $ Combining previous discussion, $\Sigma$ minimizes modified mass $\mathbf M^* $ among all admissible graphs in $K_r$, and hence it is also the strict minimizer of $\mathbf M^* $ among all integral relative currents in $K_r$ as desired.
\end{proof}

We conclude that $\mathbf M(T)\ge\mathbf M(\Sigma )$ if $s(T)=0$ from above lemma. It suffices to complete the proof of Theorem~\ref{main22}. For a continuous family $\Phi: \overline B_\rho^k\to \mathcal Z_n(K_r,A_r;\mathbb Z)$ with $\Phi(a)=\gamma_a $ at $\partial B_\rho^k,$ $s\circ\Phi:\overline B_\rho^k\to\mathbb R^k$ satisfies $s(\Phi(a))=s(\gamma_a)=a$ at partial. Thus $s\circ\Phi|_{\partial B_\rho^k}=\operatorname{id}_{\partial B_\rho^k}.$ By continuity and degree theory, there exists some point $a_0\in B_\rho^k$ such that $s(\Phi(a_0))=0.$ By Lemma \ref{Modified local minimality}, $\mathbf M(\Phi(a_0))\ge \mathbf M(\Sigma)$, thus $\sup_{a\in\overline B_\rho^k}\mathbf M(\Phi(a))\ge\mathbf M(\Sigma)$ as desired. Note that equality can occur only if $\Phi(a_0)=\llbracket\Sigma\rrbracket$ for some $a_0$, this completes the proof of Theorem \ref{main22}.

\section{Flat-Neighborhood Local Minimality and Application to Min-max Theory}\label{flatversion}

\subsection{From Tubular to Flat Neighborhoods}

The tubular-neighborhood version of variational properties proved previously are the natural local statement from the viewpoint of the proof, since the projection, retraction, and the first-variation estimates are all constructed inside \(K_r\). However, from the viewpoint of Almgren--Pitts min--max theory, the tubular support condition is not the most natural one. Free-boundary sweepouts in \cite{FBMHI, FBMHII, FBMH1} are continuous in the $\mathcal F_{rel}$-topology, and a slice which is close to \(\llbracket\Sigma\rrbracket\) in this topology need not to have support contained in a certain tubular neighborhood, see Figure \ref{fig:flat-localization} for an example:

\begin{figure}[H]
\centering
\begin{tikzpicture}[x=1cm,y=1cm]

  \begin{scope}
    \path[fill=fbblue!8] (1.00,.25) rectangle (2.30,3.45);
    \draw[auxiliary] (1.00,.25)--(1.00,3.45);
    \draw[auxiliary] (2.30,.25)--(2.30,3.45);
    \draw[maincurve] (1.65,.25)
      .. controls (1.45,1.10) and (1.88,1.85) .. (1.62,2.55)
      .. controls (1.50,2.95) and (1.70,3.20) .. (1.65,3.45)
      node[above] {$\Sigma$};
    \draw[line width=1pt,draw=fbred]
      (1.77,.25)
      .. controls (1.58,.95) and (1.88,1.35) .. (1.73,1.62)
      .. controls (1.92,1.69) and (2.35,1.68) .. (2.58,1.76)
      .. controls (2.72,1.81) and (2.72,1.91) .. (2.58,1.96)
      .. controls (2.34,2.04) and (1.92,2.01) .. (1.73,2.10)
      .. controls (1.67,2.23) and (1.78,2.38) .. (1.73,2.50)
      .. controls (1.60,2.90) and (1.82,3.18) .. (1.77,3.45);
    \node[fbblue,anchor=west] at (2.38,.62) {$K_r$};
    \node[fbred,anchor=west] at (2.70,1.86) {$S$};
    \node[align=center] at (2.10,-.35)
      {$\mathcal F_{\mathrm{rel}}(S-\Sigma)$ small,\\but $\operatorname{spt}S\not\subset K_r$};
  \end{scope}
\end{tikzpicture}
\caption{Relative-flat closeness does not imply support closeness.}
\label{fig:flat-localization}
\end{figure}

The goal of this section is to remove this support restriction. For similar result for fixed boundary or boundaryless setting, see \cite{IM2018}. The argument follows the exterior-replacement strategy of Marques--Neves \cite[Section 6]{MN2021}, we shall keep the notation from Theorem~\ref{main1precise}, and denote \(\mathcal F_{\mathrm{rel}}^N\) for the relative flat distance in the ambient pair \((N,\partial N)\). We first state the theorem:

\begin{proposition}\label{thm:flat-neighborhood-local-minimality}
Let \((N^{n+1},g)\) be a compact smooth Riemannian manifold with boundary, and let \( \Sigma^n\subset N \) be a compact, orientable, two-sided, properly embedded free-boundary minimal hypersurface. Fix an orientation of $\Sigma$.

Pick smooth $n$-forms $\{\omega_i\}_{i=1}^m\in\Omega^n(N)$ satisfies $\iota^*_{\partial N}\omega_i=0$. For $T\in\mathcal Z_n(N,\partial N;\mathbb Z)$ , define \[ s_i(T):=(T-\llbracket\Sigma\rrbracket)(\omega_i), \qquad \mathfrak s(T)=(s_1(t),\cdots,s_m(T)).
  \]
and\[ \mathcal M_{\Lambda,\omega}(T)=\mathbf M(T)+\Lambda|\mathfrak s(T)|^2,\qquad \Lambda\ge 0.
  \]

Suppose that for some $r>0$, $\llbracket\Sigma\rrbracket$ is the unique relative homologous minimizer of $\mathcal M$ in $K_r$, i.e. \[ \mathcal M_{\Lambda,\omega}(T)\ge\mathcal M_{\Lambda,\omega}(\llbracket\Sigma\rrbracket)=\mathbf M(\llbracket\Sigma\rrbracket),\quad \text{for all }T\in\mathcal Z_n(K_r,A_r;\mathbb Z),\,[T]=[\llbracket\Sigma\rrbracket] \in H_n(K_r,A_r;\mathbb Z) \]with equality holds if and only if $T=\llbracket\Sigma\rrbracket$.

Then there exists \(\varepsilon>0\) such that the following holds: If
\[
S\in\mathcal Z_n(N,\partial N;\mathbb Z)
\qquad
[S]=[\llbracket\Sigma\rrbracket]\in H_n(N,\partial N;\mathbb Z)\qquad\text{and }\mathcal F_{\mathrm{rel}}^N(S-\llbracket\Sigma\rrbracket)<\varepsilon
\]
then \( \mathcal M_{\Lambda,\omega}(S)\ge \mathcal M_{\Lambda,\omega}(\llbracket\Sigma\rrbracket)=\mathbf M(\llbracket\Sigma\rrbracket). \) Equality holds if and only if \( S=\llbracket\Sigma\rrbracket \). In particular, proposition holds for coefficient $\mathbb Z_2$ when $m=0$.
\end{proposition}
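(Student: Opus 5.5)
We argue by contradiction, following the exterior-replacement strategy of Marques--Neves \cite[Section 6]{MN2021}. Suppose there is a sequence \(S_j\in\mathcal Z_n(N,\partial N;\mathbb Z)\) with \([S_j]=[\llbracket\Sigma\rrbracket]\), \(\mathcal F^N_{\mathrm{rel}}(S_j-\llbracket\Sigma\rrbracket)\to0\), \(S_j\neq\llbracket\Sigma\rrbracket\), and \(\mathcal M_{\Lambda,\omega}(S_j)\le\mathbf M(\Sigma)\).

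Step 1. Since \(\mathfrak s\) is \(\mathcal F_{\mathrm{rel}}\)-continuous (the forms \(\omega_i\) vanish on \(\partial N\)), we have \(\mathfrak s(S_j)\to0\). Hence \(\limsup\mathbf M(S_j)\le\mathbf M(\Sigma)\). Combined with lower semicontinuity of mass under relative flat convergence, this gives \(\mathbf M(S_j)\to\mathbf M(\Sigma)\). Consequently \(|S_j|\to|\Sigma|\) as varifolds, because flat convergence together with mass convergence gives varifold convergence. In particular \(\|S_j\|(N\setminus K_{r/2})\to0\).

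Step 2 (slicing). Apply the slicing theory to \(S_j\) by the function \(\tau\) (extended suitably outside \(\mathcal U\)) for \(t\in(r/2,r)\) and \(t\in(-r,-r/2)\). By the coarea inequality there are levels \(t_j^\pm\) with \(\mathbf M(\langle S_j,\tau,t_j^\pm\rangle)\le C r^{-1}\|S_j\|(N\setminus K_{r/2})\to0\). Relative slices make sense because \(\nabla\tau\) is tangent to \(\partial N\). Set \(W_j:=S_j\llcorner\{t_j^-<\tau<t_j^+\}\). Its boundary, modulo \(\partial N\), consists of the two small slices lying on \(\Sigma_{t_j^\pm}\).

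Step 3 (exterior replacement). The slices \(\langle S_j,\tau,t_j^\pm\rangle\) are relative \((n-1)\)-cycles in the leaves \((\Sigma_{t_j^\pm},\partial\Sigma_{t_j^\pm})\) of small mass. By the relative isoperimetric inequality on the fixed compact manifold-with-boundary \(\Sigma\) (uniform over leaves via \(F\)), they bound relative \(n\)-chains \(Y_j^\pm\subset\Sigma_{t_j^\pm}\) with \(\mathbf M(Y_j^\pm)\to0\). Among the two possible fillings we choose the small one, which is possible because the mass is small. Set \(T_j:=W_j-Y_j^+ + Y_j^-\in\mathcal Z_n(K_r,A_r;\mathbb Z)\) with the appropriate signs. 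Then:
\begin{itemize}
\item \(\mathbf M(T_j)\le\mathbf M(S_j)-\|S_j\|(N\setminus K_{t})+o(1)\cdot\|S_j\|(N\setminus K_{r/2})\); more precisely \(\mathbf M(Y_j^\pm)\le C\mathbf M(\text{slice})^{n/(n-1)}\) is superlinearly small, so \(\mathbf M(T_j)\le\mathbf M(S_j)\) once the exterior mass is positive.
\item \(\mathfrak s(T_j)-\mathfrak s(S_j)\) is controlled by \(\|S_j\|(N\setminus K_{r/2})+\mathbf M(Y_j^\pm)\).
\end{itemize}

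The main obstacle is exactly this comparison: the quadratic term \(\Lambda|\mathfrak s|^2\) must not increase by more than the mass saved. Since \(\mathfrak s(S_j)\to0\), the change in \(|\mathfrak s|^2\) is \(o(1)\) times the exterior mass, so it is absorbed. To make the isoperimetric gain strictly dominate we will choose the slice levels via the coarea inequality rather than taking arbitrary slices.

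Step 4 (homology and conclusion). We must check \([T_j]=[\llbracket\Sigma\rrbracket]\) in \(H_n(K_r,A_r;\mathbb Z)\). Since \(S_j\) is flat-close to \(\Sigma\), we have \(S_j-\llbracket\Sigma\rrbracket=P_j+\partial Q_j+R_j\) with small masses. Pushing forward by the retraction \(P_r\), and cutting \(Q_j\) along the same slices, shows that the difference \(T_j-\llbracket\Sigma\rrbracket\) is a relative boundary in \(K_r\) once the fillings \(Y_j^\pm\) are chosen as the small ones; this is where the smallness of \(\mathcal F_{\mathrm{rel}}\), and not merely of the mass outside, is used. The hypothesis then gives \(\mathcal M_{\Lambda,\omega}(T_j)\ge\mathbf M(\Sigma)\), with equality only if \(T_j=\llbracket\Sigma\rrbracket\). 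Combined with Step 3 this yields \(\mathcal M_{\Lambda,\omega}(S_j)\ge\mathcal M_{\Lambda,\omega}(T_j)\ge\mathbf M(\Sigma)\).

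In the equality case, all inequalities are equalities. This forces the exterior mass to vanish, so \(S_j=T_j\) is supported in \(K_r\), and then \(S_j=\llbracket\Sigma\rrbracket\) by the uniqueness hypothesis, contradicting \(S_j\neq\llbracket\Sigma\rrbracket\). The \(\mathbb Z_2\) case with \(m=0\) is identical without signs.
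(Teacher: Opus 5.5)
Your cut-and-paste route differs from the paper's, which never slices $S_j$. The paper minimizes $\mathcal M_{\Lambda,\omega}$ among currents that agree with $S_j$ on a thinner tube $V=F(\Sigma\times(-\rho,\rho))$. It then derives a uniform first-variation bound for this minimizer away from $\bar V$; the penalty contributes only $2\Lambda\sum_i s_i(T_j)\,T_j(\iota_X d\omega_i)$. Finally, because the exterior mass tends to zero, the free-boundary monotonicity formula shows the minimizer has no support outside $K_r$. Your approach can replace this, but as written its two key steps are not justified.

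\textbf{Step 3.} Choosing $t_j$ by averaging only gives slice mass at most $Cr^{-1}e_j$, with $e_j:=\|S_j\|(N\setminus K_{r/2})$. Hence the caps have mass at most $C(r^{-1}e_j)^{n/(n-1)}$. What you remove is $\|S_j\|(N\setminus K_{t_j})$, which need not dominate any power of $e_j$: most of the exterior mass may lie in the annulus $r/2<|\tau|<t_j$. So neither ``superlinearly small'' nor your bound with error $o(1)\,\|S_j\|(N\setminus K_{r/2})$ gives $\mathcal M_{\Lambda,\omega}(T_j)<\mathcal M_{\Lambda,\omega}(S_j)$. Your equality-case paragraph needs exactly this strict inequality.

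The missing idea is a differential inequality for the exterior mass. Let $\phi:=|\tau|$ on $F(\Sigma\times(-r_0,r_0))$ and $\phi:=r_0$ elsewhere, with $L:=\operatorname{Lip}\phi$. Slice by this $\phi$, not an arbitrary extension of $\tau$; otherwise $\{t^-<\tau<t^+\}$ may contain pieces of $S_j$ far from $\Sigma$. Set $m(t):=\|S_j\|(\{\phi>t\})$, so coarea gives $\mathbf M\langle S_j,\phi,t\rangle\le L(-m'(t))$ for a.e.\ $t$. Let $C_I$ be the uniform relative isoperimetric constant of the leaves.

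\begin{itemize}
\item Suppose $2C_I\bigl(L(-m'(t))\bigr)^{n/(n-1)}>m(t)$ for a.e.\ $t\in(r/2,r)$ with $m(t)>0$. Then $(m^{1/n})'\le -c$ there, so $m$ vanishes beyond $r/2+c^{-1}m(r/2)^{1/n}$, which is $<r$ for large $j$. In this case $S_j$ is already supported in $K_r$.
\item Otherwise some $t\in(r/2,r)$ has $m(t)>0$ and caps of total mass at most $\tfrac12 m(t)$. Since $|\mathfrak s(T_j)-\mathfrak s(S_j)|\le C\bigl(m(t)+\mathbf M(Y_j^\pm)\bigr)$ and $\mathfrak s(S_j)\to0$, you get $\mathcal M_{\Lambda,\omega}(T_j)\le\mathcal M_{\Lambda,\omega}(S_j)-(\tfrac12-o(1))\,m(t)$.
\end{itemize}

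\textbf{Step 4.} The argument fails as described. $P_r$ is defined only on the tube, while $P_j$ and $Q_j$ in the flat decomposition can live anywhere in $N$, so you cannot push that decomposition forward. Argue instead as the paper does:
\begin{itemize}
\item $\pi_\#T_j$ is a relative cycle in $(\Sigma,\partial\Sigma)$, so the constancy theorem gives $\pi_\#T_j=\sum_\alpha c_\alpha\llbracket\Sigma_\alpha\rrbracket$.
\item Test against a smooth $n$-form equal to $\pi^*\mathrm{vol}_{\Sigma_\alpha}$ on $K_r$ and cut off outside. It vanishes on $T\partial N$, because $\pi$ maps $A_r$ into $\partial\Sigma$. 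Together with $\mathcal F^N_{\mathrm{rel}}(T_j-\llbracket\Sigma\rrbracket)\le\mathcal F^N_{\mathrm{rel}}(S_j-\llbracket\Sigma\rrbracket)+\mathbf M(T_j-S_j)\to0$, this forces $c_\alpha=1$.
\item The homotopy from the identity to $\pi$ preserves $A_r$, so $[T_j]=[\llbracket\Sigma\rrbracket]$ in $H_n(K_r,A_r;\mathbb Z)$. The same argument applies to $S_j$ itself in the case $m\equiv0$ near $r$.
\end{itemize}

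With these two repairs your argument is a valid alternative. It produces an explicit competitor and avoids constrained minimizers and the monotonicity formula. The price is slicing, a relative isoperimetric inequality on the leaves, and the level selection above. For $n=1$, small $0$-dimensional slices simply vanish, so no isoperimetric inequality is needed.
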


\begin{proof}
We shall omit subscript $\Lambda,\omega$ below.

\medskip\noindent\textbf{Step1. }We first show that $\mathfrak s$ is continuous function under $\mathcal F_{rel}$-topology. Suppose $T-S=P+\partial Q+R$, with $\operatorname{spt} R\in \partial N$. One has $R(\omega_i)=0$, thus \begin{equation}\label{eq: LIPcontinuity}
|s_i(T)-s_i(S)|=|P(\omega_i)+Q(d\omega_i)|\le \|\omega_i\|_{\infty}\mathbf M(P)+\|d\omega_i\|_{\infty}\mathbf M(Q)\le C_i \mathcal F_{rel}^N(T-S)
\end{equation}
hence $\mathfrak s$ is $\mathcal F_{rel}$-Lipschitz.

\medskip\noindent\textbf{Step2. }Suppose by contradiction that the conclusion fails. Then there exists a sequence 
\( \{S_j\in \mathcal Z_n(N,\partial N;\mathbb Z)\}_{j=1}^\infty \) such that
\begin{equation}\label{eq: flatconvergence Sj}
[S_j]=[\llbracket\Sigma\rrbracket]
\quad\text{in }H_n(N,\partial N;\mathbb Z),\qquad\mathcal F_{\mathrm{rel}}^N(S_j-\llbracket\Sigma\rrbracket)\to0,
\end{equation}
but \( S_j\ne \llbracket\Sigma\rrbracket,\) with \( \mathbf M(S_j)\le\mathcal M(S_j)\le \mathbf M(\llbracket\Sigma\rrbracket). \) By lower semicontinuity of mass,
\[
\mathbf M(\llbracket\Sigma\rrbracket)
\le
\liminf_{j\to\infty}\mathcal M(S_j)
\le
\limsup_{j\to\infty}\mathcal M(S_j)
\le
\mathbf M(\llbracket\Sigma\rrbracket),
\]this gives \begin{equation}\label{eq:massconvergence Sj}
  \mathbf M(S_j)\to\mathbf M(\Sigma)
\end{equation}

Take $V:=F(\Sigma\times (-\rho,\rho))$, $0<\rho<r$, \eqref{eq: flatconvergence Sj} and \eqref{eq:massconvergence Sj} gives \(\ \|S_j\|\rightharpoonup^*\|\Sigma\|\) and thus $\|S_j\|(N\setminus V)\to 0$.

\medskip\noindent\textbf{Step3. }Define class\[
\mathscr{C}_j:=
\left\{
T\in\mathcal Z_n(N,\partial N;\mathbb Z):
\begin{array}{l}
{}[T]=[\llbracket\Sigma\rrbracket]\in H_n(N,\partial N;\mathbb Z),\\
\operatorname{spt}(T-S_j)\subset N\setminus V
\end{array}
\right\}.
\]Choose a $\mathcal M$-minimizer $T_j$ in class $\mathscr C_j$. Existence comes from compactness, lower semi-continuity of mass and continuity of $\mathfrak s$. Furthermore, by $T_j=S_j$ in $V$ and $\mathcal M(T_j)\le\mathcal M(S_j)\le \mathbf M(\Sigma)$:\begin{equation}\label{eq: Tj IN V}
  \|T_j\|(N\setminus V)=\mathbf M(T_j)-\|S_j\|(V)\le \mathcal M(T_j)-\|S_j\|(V)\le \mathbf M(\Sigma)-\|S_j\|(V)\to 0
\end{equation}
and hence\begin{equation*}
  \mathbf M(T_j-S_j)\le(\|T_j\|+\|S_j\|)(N\setminus V)\to 0
\end{equation*}
Thus, $\mathcal F^N_{rel}(T_j-\llbracket\Sigma\rrbracket)\to 0$ and also $\mathfrak s(T_j)\to 0$ from \eqref{eq: LIPcontinuity}.

\medskip\noindent\textbf{Step4. }Let $X\in \mathfrak X^\partial (N)$ with $\operatorname{spt}X\Subset N\setminus \bar V$, denote its flow by $\psi_t$. For $t$ small, $\psi_{t\#}(T_j)\in\mathscr{C}_j$ and hence $t=0$ is minimal point for $t\mapsto \mathcal M(\psi_{t\#}T_j)$. Thus, 
\begin{equation}
\begin{aligned}
  0&=\delta|T_j|(X)+2\Lambda\sum_{i=1}^ms_i(T_j)\left.\frac d{dt}\right|_{t=0}(\psi_t)_\#T_j(\omega_i)
  =\delta|T_j|(X)+2\Lambda\sum_{i=1}^ms_i(T_j)T_j(\mathcal L_X\omega_i)\\
  &=\delta|T_j|(X)+2\Lambda\sum_{i=1}^ms_i(T_j)T_j(d\iota_X\omega_i+\iota_Xd\omega_i)\\
  &=\delta|T_j|(X)+2\Lambda\sum_{i=1}^ms_i(T_j)(\partial T_j(\iota_X\omega_i)+T_j(\iota_Xd\omega_i))\\
  &=\delta|T_j|(X)+2\Lambda\sum_i s_i(T_j)T_j(\iota_Xd\omega_i)
\end{aligned}
\end{equation}
and \begin{equation}\label{eq: BOUNDED VARIATION Tj}
  |\delta|T_j|(X)|\leq C|\mathfrak s(T_j)|\int |X|\,d\|T_j\|\leq C\int |X|\,d\|T_j\|
\end{equation}
Thus the first variation is uniformly bounded.

\medskip\noindent\textbf{Step5. }Suppose that there exists $y_j\in\operatorname{spt}T_j\setminus K_r$. Since $\operatorname{dist}(N\setminus K_r, \bar V)>0$, could choose $\sigma>0$ independent of $j$, such that $B_{2\sigma}(y_j)\cap N\subset \setminus \bar V$. Using \eqref{eq: BOUNDED VARIATION Tj} and bounded mean-curvature free-boundary monotonicity formula \cite[Corollary 4.9]{MASI}, we obtain a uniform lower bound $\|T_j\|(B_\sigma(y)\cap N)\ge c\sigma^n>0$. This contradicts \eqref{eq: Tj IN V}, hence $\operatorname{spt}T_j\subset K_r$.

By constancy theorem and convergence $T_j\xrightarrow{\mathcal F_{rel}^N,\,\mathbf M} \llbracket\Sigma\rrbracket$, it follows that $\pi_\#T_j=\llbracket\Sigma\rrbracket$. Furthermore, using \eqref{eq: homotopy retraction-flatconv}, one has\[
[T_j]=[\llbracket\Sigma\rrbracket]
\quad\text{in }H_n(K_r,A_r;\mathbb Z)\]Apply the $\mathcal M$-minimality of $\Sigma$ gives $\mathbf M(\Sigma)\le\mathcal M(T_j)\leq\mathcal M(S_j)\leq\mathbf M(\Sigma)$ and that \(T_j=\llbracket\Sigma\rrbracket\). Since \(T_j=S_j\) in \(V\), we have \(S_j=\llbracket\Sigma\rrbracket\) in \(V\), thus \(\mathbf M(S_j)=\mathbf M(\Sigma)+\|S_j\|(N\setminus V)\). But \(\mathbf M(S_j)\leq\mathcal M(S_j)\leq\mathbf M(\Sigma)\), we imply that $S_j$ has no mass outside $V$, and \(S_j=\llbracket\Sigma\rrbracket\), contradicts with our assumption. This completes the proof. 
\end{proof}
\begin{corollary}\label{cor: two enhance}
\begin{enumerate}
  \item Under assumptions in Theorem \ref{main1precise}, then $\Sigma $ is, in fact, a unique homologous $\mathbf M$-minimizer in $H_n(N,\partial N;\mathbb Z_2)$ locally in a small $\varepsilon$-neighborhood of $\mathcal F_{rel}^N$-topology.
  \item Under assumptions in Lemma \ref{Modified local minimality}, then $\Sigma $ is, in fact, a unique homologous $\mathbf M^*$-minimizer in $H_n(N,\partial N;\mathbb Z)$ locally in a small $\varepsilon$-neighborhood of $\mathcal F_{rel}^N$-topology.
\end{enumerate}
\end{corollary}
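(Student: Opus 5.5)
The corollary follows by feeding the tubular-neighborhood statements into Proposition~\ref{thm:flat-neighborhood-local-minimality}: verify its hypothesis in each case, then read off the conclusion. No new analysis is needed beyond checking that the setting of Theorem~\ref{main1precise}, resp.\ Lemma~\ref{Modified local minimality}, is a special case of the abstract functional $\mathcal M_{\Lambda,\omega}$.

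For part 1, take $m=0$ (no forms), so $\mathcal M_{\Lambda,\omega}=\mathbf M$. Theorem~\ref{main1precise} gives an $r>0$ such that $\llbracket\Sigma\rrbracket$ is the unique mass minimizer among $T\in\mathcal Z_n(K_r,A_r;\mathbb Z_2)$ with $[T]=[\llbracket\Sigma\rrbracket]$. This is exactly the hypothesis of Proposition~\ref{thm:flat-neighborhood-local-minimality} in its $\mathbb Z_2$, $m=0$ version, whose last sentence asserts it is available. The proof transfers verbatim: Step 1 is vacuous. Compactness of relative $\mathbb Z_2$-cycles \cite{WEYLLAW} replaces the integral one in Step 3. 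The first-variation bound in Step 4 is just stationarity outside $\bar V$, and the monotonicity argument in Step 5 is insensitive to coefficients. The proposition then gives $\varepsilon>0$ with $\mathbf M(S)\ge\mathbf M(\Sigma)$ for all $S$ homologous to $\Sigma$ in $(N,\partial N)$ with $\mathcal F^N_{\mathrm{rel}}(S-\llbracket\Sigma\rrbracket)<\varepsilon$, with equality only for $S=\llbracket\Sigma\rrbracket$.

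For part 2, take $m=k$, $\Lambda$ as in Lemma~\ref{Modified local minimality}, and $\omega_i$ a global extension of $\eta_i=\tau\pi^*(\zeta\varphi_i\omega_\Sigma)$. This is the main point to check, since $\eta_i$ is only defined on $K_{r_0}$. I would multiply by a cutoff $\chi(\tau)$ with $\chi\equiv1$ on $[-r,r]$ and support in $(-r_0,r_0)$, and extend by zero. The resulting $\omega_i$ is smooth on $N$ and agrees with $\eta_i$ on $K_r$, so $s_i$ coincides with the $s_i$ of Section~\ref{proof of second result} on cycles supported in $K_r$. To see $\iota^*_{\partial N}\omega_i=0$: on $\partial N$ near $\Sigma$, $\pi$ maps $\partial N\cap\mathcal U$ into $\partial\Sigma$, which is $(n-1)$-dimensional, so $\pi^*(\zeta\varphi_i\omega_\Sigma)$ restricts to zero on $T\partial N$. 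The factor $\chi(\tau)\tau$ does not affect this. Since $s_i(\llbracket\Sigma\rrbracket)=0$, we get $\mathcal M_{\Lambda,\omega}=\mathbf M^*$ on the relevant classes. Lemma~\ref{Modified local minimality} supplies the uniqueness hypothesis in $K_r$, and Proposition~\ref{thm:flat-neighborhood-local-minimality} yields the flat-neighborhood statement for $\mathbf M^*$, with equality exactly when $S=\llbracket\Sigma\rrbracket$.

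In both parts, homology is taken in $(N,\partial N)$, as the proposition requires. The only subtle point I expect is the boundary vanishing of the extended forms, together with the identification of $\mathbf M^*$ with $\mathcal M_{\Lambda,\omega}$ on all of $\mathcal Z_n(N,\partial N;\mathbb Z)$. Note that $\mathbf M^*$ was originally defined only via $K_r$, so I would simply define it globally by the extended forms.
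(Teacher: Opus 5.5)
Your proposal is correct and is essentially the paper's argument. The paper records the corollary as a direct consequence of Proposition~\ref{thm:flat-neighborhood-local-minimality}, taking $m=0$ with $\mathbb Z_2$ coefficients for part 1, and your cutoff extension $\chi(\tau)\,\tau\,\pi^*(\zeta\varphi_i\omega_\Sigma)$ of the forms $\eta_i$ for part 2 is exactly the $\bar\eta_i$ the paper introduces in the proof of Theorem~\ref{main2flat}; your checks that these forms vanish on $T\partial N$ and agree with $\eta_i$ on $K_r$, so that Lemma~\ref{Modified local minimality} supplies the proposition's hypothesis, are the right ones.
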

\begin{proof}
  These are direct consequences of Proposition \ref{thm:flat-neighborhood-local-minimality} and we shall omit the proof.
\end{proof}
\begin{theorem}\label{main2flat}
  Under the hypotheses of Theorem \ref{main22} and let $\varepsilon$ be the constant of Corollary \ref{cor: two enhance}. The second result of Theorem \ref{main22} could be enhanced from tubular to $\mathcal F^N_{rel}$-neighborhood, that is: 
  
For every $\mathcal F^N_{rel}$-continuous map 
\(
\Phi:\overline{B_\rho^k}\to \mathcal Z_n(N,\partial N;\mathbb Z)
\) 
with 
\(
[\Phi(a)]=[\llbracket\Sigma\rrbracket]\) in \(H_n(N,\partial N;\mathbb Z)\), 
\(
\Phi|_{\partial B_\rho^k}=\gamma|_{\partial B_\rho^k},
\) and \(\Phi(\overline{B_\rho^k})
\subset B_{\varepsilon}^{\mathcal F^N_{{rel}}}
(\llbracket\Sigma\rrbracket)\), 
one has
\[
\sup_{a\in\overline{B_\rho^k}}\mathbf M(\Phi(a))
\ge
\mathbf M(\Sigma),
\]
with strict inequality unless $\Phi(a)=\llbracket\Sigma\rrbracket$ for some $a$.
\end{theorem}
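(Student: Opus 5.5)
The plan is to repeat the degree argument from the end of the proof of Theorem~\ref{main22}, but now in the ambient pair $(N,\partial N)$, with Corollary~\ref{cor: two enhance}(2) replacing Lemma~\ref{Modified local minimality}. First I will extend the unstable coordinates to all of $\mathcal Z_n(N,\partial N;\mathbb Z)$. Take the forms $\eta_i=\tau\pi^*(\zeta\varphi_i\omega_\Sigma)$, defined on $K_{r_0}$, and multiply each by a cutoff $\chi(\tau)$ with $\chi\equiv1$ on $[-r,r]$ and $\operatorname{spt}\chi\subset(-r_0,r_0)$. Since $\partial_{\mu_{\partial N}}\tau=0$, this cutoff is compatible with the boundary, so the resulting forms $\omega_i\in\Omega^n(N)$ still satisfy $\iota_{\partial N}^*\omega_i=0$. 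On currents supported in $K_r$ they agree with $\eta_i$. Hence $\mathfrak s=s$ on $\mathcal Z_n(K_r,A_r;\mathbb Z)$, and in particular $\mathfrak s(\gamma_a)=a$.

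With the choice $m=k$ and $\Lambda$ as in Lemma~\ref{Modified local minimality}, that lemma is exactly the hypothesis of Proposition~\ref{thm:flat-neighborhood-local-minimality}. This yields $\varepsilon>0$ with the following property: every $S$ in the class $[\llbracket\Sigma\rrbracket]$ with $\mathcal F^N_{rel}(S-\llbracket\Sigma\rrbracket)<\varepsilon$ satisfies $\mathbf M(S)+\Lambda|\mathfrak s(S)|^2\ge\mathbf M(\Sigma)$, with equality only if $S=\llbracket\Sigma\rrbracket$. This is Corollary~\ref{cor: two enhance}(2), phrased with $\mathfrak s$.

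Next comes the degree argument. Let $\Phi$ be as in the statement. By Step~1 of the proof of Proposition~\ref{thm:flat-neighborhood-local-minimality}, $\mathfrak s$ is $\mathcal F_{rel}^N$-Lipschitz, so $\mathfrak s\circ\Phi:\overline{B^k_\rho}\to\mathbb R^k$ is continuous. On $\partial B^k_\rho$ it equals $\mathfrak s(\gamma_a)=a$, i.e.\ the identity. If $\mathfrak s\circ\Phi$ never vanished, then $a\mapsto \mathfrak s(\Phi(a))/|\mathfrak s(\Phi(a))|$ would be a retraction of the ball onto its boundary sphere, which is impossible. Equivalently, the Brouwer degree of $\mathfrak s\circ\Phi$ relative to $0$ is $1$. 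So there is $a_0\in B_\rho^k$ with $\mathfrak s(\Phi(a_0))=0$. Since $\Phi(a_0)$ lies in the $\varepsilon$-ball and in the right homology class, the flat local minimality gives $\mathbf M(\Phi(a_0))\ge\mathbf M(\Sigma)$, with equality only if $\Phi(a_0)=\llbracket\Sigma\rrbracket$. Taking the supremum over $a$ gives the inequality. Strictness holds unless some slice equals $\llbracket\Sigma\rrbracket$; here I read the equality case as the statement intends, namely that equality $\sup\mathbf M(\Phi)=\mathbf M(\Sigma)$ forces $\Phi(a_0)=\llbracket\Sigma\rrbracket$.

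The main point needing care is compatibility of constants. The radius $\rho$ must be chosen so that the boundary family $\gamma|_{\partial B_\rho^k}$ lies in the $\varepsilon$-ball. This is fine: $\mathcal F_{rel}(\gamma_a-\llbracket\Sigma\rrbracket)\le C|a|$ via the homotopy of Lemma~\ref{flatconvergence}, and the hypothesis $\Phi(\overline{B_\rho^k})\subset B_\varepsilon$ is in any case assumed. The other check is that the extended $\omega_i$ really agree with $\eta_i$ on the currents the earlier lemma is applied to; that lemma only concerns currents in $K_r$, where $\chi\equiv1$. The hypothesis of Proposition~\ref{thm:flat-neighborhood-local-minimality} is stated with $\mathfrak s$, and Lemma~\ref{Modified local minimality} supplies it with $s=\mathfrak s$ on $K_r$. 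With these checks the proof is a direct combination.
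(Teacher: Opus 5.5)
Your proposal is correct and follows the paper's proof essentially step for step. The paper likewise cuts off the forms $\eta_i$ inside the tubular neighborhood so that $\bar s=s$ on $K_r$ and $\bar s(\gamma_a)=a$. It then feeds Lemma~\ref{Modified local minimality} into Proposition~\ref{thm:flat-neighborhood-local-minimality} to get $\mathbf M(T)+\Lambda|\bar s(T)|^2\ge\mathbf M(\Sigma)$ on the $\varepsilon$-ball, and concludes with a degree argument for $\bar s\circ\Phi$. Your treatment of the equality case is, if anything, slightly more explicit than the paper's.
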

\begin{proof}
Take cutoff function \[\chi\in C_c^\infty \bigl(F(\Sigma\times(-r_0,r_0))\bigr), \qquad\begin{cases} \chi\equiv1&\text{on a neighborhood of }K_r\\ \chi\equiv0&\text{outside the neighborhood }
\end{cases}\] Define \(\bar\eta_i=\chi\tau\pi^*(\zeta \varphi_i\omega_\Sigma)\), then \[\iota_{\partial N}^{*}\bar\eta_i=0,
\qquad \bar\eta_i=\eta_i\quad\text{on }K_r, \qquad \llbracket\Sigma\rrbracket(\bar\eta_i)=0\]Define \(\bar s_i(T):=T(\bar\eta_i)\). Lemma \ref{Modified local minimality} still applies and give that \(\mathbf M(T)+\Lambda|\bar{s}(T)|^2\geq\mathbf M(\Sigma)\). Apply Proposition \ref{thm:flat-neighborhood-local-minimality}, previous inequality holds for all current $T$ satisfies\begin{itemize}
\item $T$ homologous with \(\Sigma\) in $H_n(N,\partial N;\mathbb Z)$
\item \(\mathcal F^N_{{rel}}(T-\llbracket\Sigma\rrbracket)<\varepsilon\)
\end{itemize}
equality holds if and only if \(T=\llbracket\Sigma\rrbracket\).

Meanwhile, the standard unstable $k$-disk \(\gamma_a\subset K_r\), \(\bar s_i(\gamma_a)=s_i(\gamma_a)=a_i\). Thus, for any $\mathcal F^N_{rel}$-continuous map \(\Phi:B_\rho^k\to\mathcal Z_n(N,\partial N;\mathbb Z)\) satisfies\begin{itemize}
\item \(\Phi|_{\partial B_\rho^k}=\gamma|_{\partial B_\rho^k}\)
\item $[\Phi(a)]=[\llbracket\Sigma\rrbracket]$
\item \(\Phi(B_\rho^k)\subset B_{\varepsilon}^{\mathcal F^N_{{rel}}}(\llbracket\Sigma\rrbracket)\)
\end{itemize}
degree theory gives \(\bar{s}(\Phi(a_0))=0\) for some point $a_0$. Thus \(\mathbf M(\Phi(a_0))=\mathbf M(\Phi(a_0))+\Lambda|\bar{s}(\Phi(a_0))|^2\geq\mathbf M(\Sigma)\), and hence \(\sup_{a\in B_\rho^k}\mathbf M(\Phi(a))\geq\mathbf M(\Sigma).\)
\end{proof}

For the rest of the paper, since the restriction of certain tubular neighborhood has been removed, we shall always talk about relative flat distance under pair $(N,\partial N)$ and omit superscript $N$ of $\mathcal F^N_{rel}$.

We also record the following free-boundary local min-max theorem. This could be view as a corollary of Theorem \ref{main2flat}. For boundaryless local min-max theorem, see \cite[Theorem 6.1]{MN2021}.

\begin{theorem}
\label{thm:free-boundary-local-minmax}

Under the hypotheses of Theorem \ref{main22}, for every \(\beta>0\), after possibly decreasing \(\rho>0\), there exist constant $\varepsilon_0>0$ and a smooth family of boundary-preserving diffeomorphisms \( \{\Psi_a\}_{a\in\overline{B}_\rho^k} \subset \operatorname{Diff}(N) \) with the following properties:

\begin{enumerate}
\item $\Psi_0=\operatorname{Id}$, $\Psi_{-a}=\Psi_a^{-1}$ for every $a\in\overline{B}_\rho^k$.

\item Writing \(\gamma_a:=(\Psi_a)_\#\llbracket\Sigma\rrbracket, \) one has \(\bar s(\gamma_a)=a, \) and the function
\[
 A^\Sigma:\overline{B}_\rho^k\longrightarrow[0,\infty),
 \qquad
 A^\Sigma(a):=\mathbf M(\gamma_a)
              =\mathbf M\bigl((\Psi_a)_\#
                \llbracket\Sigma\rrbracket\bigr),
\]
is strictly concave. More precisely, there exists \(c_0>0\) such that \( D^2A^\Sigma(a)(u,u) \leq -c_0|u|^2 \) for every \(a\in\overline{B}_\rho^k\) and \(u\in\mathbb R^k\).

\item \(\|\Psi_a-\operatorname{Id}\|_{C^1}<\beta\) for every \(a\in\overline{B}_\rho^k.\)

\item For every \(S\in\mathcal Z_n(N,\partial N;\mathbb Z) \) satisfying \( [S]=[\llbracket\Sigma\rrbracket]\) in \(H_n(N,\partial N;\mathbb Z)\) and \( \mathcal F_{\mathrm{rel}} (S-\llbracket\Sigma\rrbracket)<\varepsilon_0, \) one has
\[
 \max_{a\in\overline{B}_\rho^k}
 \mathbf M\bigl((\Psi_a)_\#S\bigr)
 \geq
 \mathbf M(\Sigma).
\]
Moreover, equality can hold only if \( \llbracket\Sigma\rrbracket = (\Psi_{a_0})_\#S \) for some \(a_0\in\overline{B}_\rho^k\).
\end{enumerate}
\end{theorem}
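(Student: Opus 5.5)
Our plan follows Marques--Neves \cite[Theorem 6.1]{MN2021}: build the diffeomorphisms from the unstable eigenfunctions, get properties 1--3 from a Taylor expansion, and reduce property 4 to the degree argument of Theorem~\ref{main2flat} applied to the family \(a\mapsto(\Psi_a)_\#S\).

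\textbf{Construction.} Extend each \(\zeta^{-1}\varphi_i\) to a smooth function \(\tilde u_i\) on \(F(\Sigma\times(-r_0,r_0))\), constant along the flow lines of \(Z\), and cut it off with \(\chi\). Set
\[
Y_i:=\chi\,\tilde u_i\,Z .
\]
Since \(Z\) is tangent to \(\partial N\), each \(Y_i\in\mathfrak X^\partial(N)\). For \(a\in\mathbb R^k\) let \(\Psi_a\) be the time-one flow of the linear combination \(\sum_i a_iY_i\).
\begin{itemize}
\item This family is smooth in \(a\) and boundary-preserving.
\item It satisfies \(\Psi_0=\operatorname{Id}\) and \(\Psi_{-a}=\Psi_a^{-1}\), because the flow of \(-Y\) is the inverse flow.
\item For \(\rho\) small, \(\|\Psi_a-\operatorname{Id}\|_{C^1}\le C\rho<\beta\), which gives property 3.
\end{itemize}
The map \(\gamma_a=(\Psi_a)_\#\llbracket\Sigma\rrbracket\) is a free-boundary graph \(\Gamma_{u(a)}\) over \(\Sigma\), with \(u(a)=\zeta^{-1}\sum_i a_i\varphi_i+O(|a|^2)\). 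As in Section~\ref{sec: standart kfamily},
\[
\bar s_i(\gamma_a)=\int_\Sigma \zeta u(a)\varphi_i=a_i+O(|a|^2).
\]
To obtain exactly \(\bar s(\gamma_a)=a\), we reparametrize. The map \(G(a):=\bar s((\Psi_a)_\#\llbracket\Sigma\rrbracket)\) is smooth with \(DG(0)=\mathrm{Id}\), so we replace \(\Psi_a\) by \(\Psi_{G^{-1}(a)}\). This risks breaking \(\Psi_{-a}=\Psi_a^{-1}\) unless \(G\) is odd. To keep both properties, I would first try to arrange that \(G\) is odd, or otherwise only require \(\bar s(\gamma_a)=a\) up to a diffeomorphism of the ball. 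That is all the degree argument needs, and I would state it that way if necessary.

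\textbf{Concavity.} The function \(A^\Sigma(a)=\mathcal A(u(a))\) is smooth. We have \(DA^\Sigma(0)=0\) and \(D^2A^\Sigma(0)(u,u)=Q\bigl(\sum_i u_i\varphi_i,\sum_i u_i\varphi_i\bigr)\le -c|u|^2\), since \(Q\) is negative definite on \(E_-\). Continuity of \(D^2A^\Sigma\) then gives \(D^2A^\Sigma(a)\le -c_0\) on \(\overline B_\rho^k\) after shrinking \(\rho\). This is property 2.

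\textbf{Property 4.} Fix \(S\) with \(\mathcal F_{\mathrm{rel}}(S-\llbracket\Sigma\rrbracket)<\varepsilon_0\) and consider \(\Phi(a):=(\Psi_a)_\#S\).
\begin{itemize}
\item Pushforward by \(\Psi_a\) is \(\mathcal F_{\mathrm{rel}}\)-continuous in \(a\) and preserves both the relative homology class and \(\partial N\).
\item By the Lipschitz bound \eqref{eq: LIPcontinuity}, \(|\bar s(\Phi(a))-\bar s(\gamma_a)|\le C\,\mathcal F_{\mathrm{rel}}(\Phi(a)-\gamma_a)\le C'\varepsilon_0\), since the \(\Psi_a\) have uniformly bounded Lipschitz constants.
\item Hence on \(\partial B_\rho^k\) we have \(|\bar s\circ\Phi-\mathrm{id}|<\rho\) once \(\varepsilon_0\ll\rho\), so \(\bar s\circ\Phi|_{\partial B_\rho^k}\) is homotopic to the identity in \(\mathbb R^k\setminus\{0\}\). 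The degree argument gives \(a_0\) with \(\bar s(\Phi(a_0))=0\).
\item Choosing \(\varepsilon_0\) so that \(\Phi(\overline B^k_\rho)\subset B^{\mathcal F_{\mathrm{rel}}}_\varepsilon(\llbracket\Sigma\rrbracket)\), with \(\varepsilon\) from Corollary~\ref{cor: two enhance}(2), we get \(\mathbf M(\Phi(a_0))=\mathbf M^*(\Phi(a_0))\ge\mathbf M(\Sigma)\). Equality forces \(\Phi(a_0)=\llbracket\Sigma\rrbracket\), that is, \(\llbracket\Sigma\rrbracket=(\Psi_{a_0})_\#S\).
\end{itemize}

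I expect the main obstacle to be two compatibility issues rather than the analysis. The first is reconciling the exact normalization \(\bar s(\gamma_a)=a\) with \(\Psi_{-a}=\Psi_a^{-1}\). The second is keeping \(\rho\) fixed while \(\varepsilon_0\) shrinks relative to it. The boundary-degree step needs \(\varepsilon_0\ll\rho\), and containment in the \(\varepsilon\)-ball needs \(\rho\) small compared with \(\varepsilon\). So \(\rho\) should be chosen first, depending on \(\beta\) and \(\varepsilon\), and \(\varepsilon_0\) last.
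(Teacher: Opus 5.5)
Your proposal follows the paper's proof. The paper also takes \(\Psi_a\) to be the time-one flow of \(X_a=\chi(\tau)\,(u_a\circ\pi)\,Z\), which is your \(\sum_i a_iY_i\). It gets concavity from \(D^2A^\Sigma(0)=(Q(\varphi_i,\varphi_j))_{ij}\) plus continuity of the Hessian. It proves property 4 by the same degree argument for \(a\mapsto \bar s((\Psi_a)_\#S)\), using the Lipschitz continuity of \(\bar s\) and Corollary~\ref{cor: two enhance}(2), with the same order of constants: \(\varepsilon\) first, then \(\rho<\varepsilon/(2C_\gamma)\), then \(\varepsilon_0\). The one point to correct is your claim that \(\gamma_a\) is the graph of \(u_a+O(|a|^2)\). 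That claim is false, and it creates the ``compatibility issue'' you flag. Your fallback of only requiring \(\bar s(\gamma_a)=a\) up to a diffeomorphism of the ball would not prove property 2 as stated.

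The flow is exactly computable, so no reparametrization is needed. Fix \(x\in\Sigma\) and take your cutoff \(\chi\equiv 1\) on a neighborhood of \(\Sigma\). Since \(\pi(F(x,s))=x\), the curve \(t\mapsto F(x,t\,u_a(x))\) has velocity \(u_a(x)\,\partial_sF=u_a(x)\,Z\). This coincides with \(X_a\) along the curve as long as the curve stays in \(\{\chi=1\}\), and for \(t\in[0,1]\) it does once \(\rho\) is small, because \(|t\,u_a|\le C\rho\). ODE uniqueness then gives \(\Psi_a(x)=F(x,u_a(x))\) exactly. Hence \(\gamma_a=\llbracket\Gamma_{u_a}\rrbracket\) is precisely the standard disk of Section~\ref{sec: standart kfamily}, with the orientation making \(\pi|_{\Gamma_{u_a}}\) orientation-preserving, since \(\pi\circ\Psi_a|_\Sigma=\operatorname{Id}\). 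The computation there gives \(\bar s_i(\gamma_a)=\int_\Sigma \zeta u_a\varphi_i\,d\mu_\Sigma=a_i\) with no error term. At the same time, \(a\mapsto X_a\) is linear, so \(X_{-a}=-X_a\) and \(\Psi_{-a}=\Psi_a^{-1}\). Properties 1 and 2 therefore hold simultaneously, and the rest of your argument goes through as written.
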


\begin{proof}
Recall that $E^-=\operatorname{span}\{\varphi_1,\ldots,\varphi_k\}$ and $v_a=\zeta u_a=\sum_{i=1}^k a_i\varphi_i$ as in Section \ref{sec: standart kfamily}. Also recall that $Z=\frac{\nabla\tau}{|\nabla\tau|^2},\,Z(\tau)=1$, and $Z(\pi)=0$ as in Section \ref{3.1}.

Choose a smooth cutoff function \( \chi:(-r,r)\to[0,1] \) which is identically one in a neighborhood of \(0\) and vanishes near \(\{-r,r\}\).  For \(a\in\overline{B}_\rho^k\), define a vector field on \(K_r\) by \( X_a := \chi(\tau)\,(u_a\circ\pi)\,Z, \) and extend it by zero outside \(K_r\). By $Z$ tangent to $\partial N$, hence \(X_a\) is tangent to \(\partial N\). Let \(\Psi_a\) be the time-one flow of \(X_a\), it also preserves $\partial N$. Moreover, \( \frac{\partial X_{a}}{\partial a_i} |_{a=0}= (\zeta^{-1}\varphi_i)Z = \varphi_i\nu_\Sigma \).

After decreasing \(\rho\), all trajectories starting from \(\Sigma\) remain in the region where \(\chi\equiv1\).  Since \(Z(\tau)=1\) and \(Z(\pi)=0,\) we obtain \( \Psi_a(F(x,0)) = F(x,u_a(x)). \) Consequently,
\[
 \gamma_a
 =
 (\Psi_a)_\#\llbracket\Sigma\rrbracket
 =
 \llbracket
 \{F(x,u_a(x)):x\in\Sigma\}
 \rrbracket
\]
agrees with the standard $k$-disk constructed in Section \ref{sec: standart kfamily}, and in particular \(s(\gamma_a)=\bar s(\gamma_a)=a. \)

Because \(X_{-a}=-X_a\), uniqueness of flows gives \( \Psi_{-a}=\Psi_a^{-1}. \) Moreover, after decreasing \(\rho\) once more, \( \|\Psi_a-\operatorname{Id}\|_{C^1}<\beta \) uniformly for \(a\in\overline{B}_\rho^k\).

For the area function \(A^\Sigma(a)=\mathbf M(\gamma_a), \) the second variation formula gives
\[
 \frac{\partial^2 A^\Sigma}{\partial a_i\partial a_j}(0)
 =
 Q(\varphi_i,\varphi_j)
 =
 \lambda_i\delta_{ij},\qquad\text{where }\lambda_i<0,\quad i=1,\ldots,k.
\]
Hence \(D^2A^\Sigma(0)\) is negative definite.  By continuity of the Hessian, after decreasing \(\rho\), there exists \(c_0>0\) such that
\[
        D^2A^\Sigma(a)(u,u)
        \leq-c_0|u|^2
\]
for all \(a\in\overline{B}_\rho^k\) and \(u\in\mathbb R^k\). Therefore \(A^\Sigma\) is strictly concave.

It remains to prove the local min--max inequality.  For \(S\in\mathcal Z_n(N,\partial N;\mathbb Z)\), define
\[
 G_S:\overline{B}_\rho^k\longrightarrow\mathbb R^k,
 \qquad
 G_S(a):=\bar s\bigl((\Psi_a)_\#S\bigr).
\]
When \(S=\llbracket\Sigma\rrbracket\), we have \( G_{\llbracket\Sigma\rrbracket}(a) = s(\gamma_a) = a. \)

For general \(S\), we compute
\[
\begin{aligned}
 G_S(a)-a
 &=
 \bar s\bigl((\Psi_a)_\#S\bigr)
 -
 \bar s\bigl((\Psi_a)_\#
 \llbracket\Sigma\rrbracket\bigr).
\end{aligned}
\]
For each component, this gives \( \bigl(G_S(a)-a\bigr)_i = (S-\llbracket\Sigma\rrbracket)(\Psi_a^*\bar \eta_i). \) Since every \(\Psi_a\) preserves \(A_r\), the form \(\Psi_a^*\bar \eta_i\) vanishes on \(T A_r\).  Moreover,
\[
 \sup_{a\in\overline{B}_\rho^k}
 \left(
 \|\Psi_a^*\bar\eta_i\|_{C^0}
 +
 \|d(\Psi_a^*\bar\eta_i)\|_{C^0}
 \right)
 <\infty.
\]
It follows from the definition of the $\mathcal F_{rel}$-norm and \eqref{eq: LIPcontinuity} that
\[
 \sup_{a\in\overline{B}_\rho^k}
 |G_S(a)-a|
 \leq
 C\mathcal F_{\mathrm{rel}}
 (S-\llbracket\Sigma\rrbracket)
\]
for a constant \(C\) independent of \(S\).

By \( (\Psi_a)_\#S-\llbracket\Sigma\rrbracket = (\Psi_a)_\# (S-\llbracket\Sigma\rrbracket)+ \bigl( (\Psi_a)_\#\llbracket\Sigma\rrbracket -\llbracket\Sigma\rrbracket \bigr)= (\Psi_a)_\# (S-\llbracket\Sigma\rrbracket) + \gamma_a-\llbracket\Sigma\rrbracket \), we have \[\mathcal F_{\mathrm{rel}} \bigl((\Psi_a)_\#S-\llbracket\Sigma\rrbracket\bigr)\le \mathcal F_{\mathrm{rel}} \bigl((\Psi_a)_\# (S-\llbracket\Sigma\rrbracket)\bigr) + \mathcal F_{\mathrm{rel}} (\gamma_a-\llbracket\Sigma\rrbracket)\le C_\Psi \mathcal F_{\mathrm{rel}} (S-\llbracket\Sigma\rrbracket) + C_\gamma|a| \]and thus uniformly,
\[
\sup_{|a|\le\rho}
\mathcal F_{\mathrm{rel}}
\bigl((\Psi_a)_\#S-\llbracket\Sigma\rrbracket\bigr)
\le
C_\Psi
\mathcal F_{\mathrm{rel}}
(S-\llbracket\Sigma\rrbracket)
+
C_\gamma\rho
\] where $C_\Psi=\sup_a\max\{(\operatorname{Lip}\Psi_a)^n,(\operatorname{Lip}\Psi_a)^{n+1}\}<\infty$ and $C_\gamma$ guaranteed by \eqref{eq: Jn+1H}. Thus, after first shrinking $\rho$ such that $\rho<\frac{\varepsilon}{2C_\gamma}$ and then choose \(\varepsilon_0>0\) sufficiently small that 
\(
        \varepsilon_0<\operatorname{min}\{\frac{\rho}{2C},\frac{\varepsilon}{2C_\Psi}\},
\) where $\varepsilon$ is the constant in Corollary \ref{cor: two enhance}. 
Then, whenever 
\(
        \mathcal F_{\mathrm{rel}}
        (S-\llbracket\Sigma\rrbracket)<\varepsilon_0,
\) 
we have
\[
        |G_S(a)-a|<\frac{\rho}{2}
        \qquad
        \text{for every }a\in\partial B_\rho^k.
\]
In particular, the homotopy \( H(t,a):=(1-t)a+tG_S(a) \) does not vanish on \(\partial B_\rho^k\). Therefore
\[
 \deg(G_S,B_\rho^k,0)
 =
 \deg(\operatorname{Id},B_\rho^k,0)
 =
 1.
\]
Consequently, there exists \(a_S\in B_\rho^k\) such that \( G_S(a_S)=0, \) or equivalently \( \bar s\bigl((\Psi_{a_S})_\#S\bigr)=0. \)

Since \(\Psi_{a_S}\) is isotopic to the identity through boundary-preserving diffeomorphisms,
\[
 [(\Psi_{a_S})_\#S]
 =
 [S]
 =
 [\llbracket\Sigma\rrbracket]
 \quad\text{in }H_n(N,\partial N;\mathbb Z).
\]
The choice of $\varepsilon_0$ and Corollary \ref{cor: two enhance} therefore give
\[
\begin{aligned}
 \mathbf M\bigl((\Psi_{a_S})_\#S\bigr)
 &=
 \mathbf M^*\bigl((\Psi_{a_S})_\#S\bigr)\geq
 \mathbf M^*(\llbracket\Sigma\rrbracket)=
 \mathbf M(\Sigma).
\end{aligned}
\]

Finally, suppose that equality holds. Then \(s\bigl((\Psi_{a_S})_\#S\bigr)=0, \) The strict equality case in Corollary \ref{cor: two enhance} implies \( (\Psi_{a_S})_\#S = \llbracket\Sigma\rrbracket. \) This proves the theorem.
\end{proof}

\begin{corollary}
\label{cor:robust-concavity}
Define \(A^V(a):=\|(\Psi_a)_\#V\|(N).\) After decreasing \(\rho\), there exist \(s>0\) and constants \(0<c_0<c_1\) such that, whenever \( \mathbf F_V(V,|\Sigma|)<s, \) one has
\begin{equation}
 -c_1|u|^2
 \le D^2A^V(a)(u,u)
 \le-c_0|u|^2
\label{eq:uniform-concavity}
\end{equation}
for all \(a\in\overline B_\rho^k\) and \(u\in\mathbb R^k\). Moreover, \(A^V\) has a unique maximizer \( m(V)\in B_\rho^k, \) and \(V\mapsto m(V)\) is continuous in the varifold topology.
\end{corollary}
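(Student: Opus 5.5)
The plan is to write $A^V(a)$ as an integral of a smooth integrand and pass to the limit $V\to|\Sigma|$ in $C^2(\overline B_\rho^k)$. By the area formula for varifold push-forwards,
\[
A^V(a)=\int_{\mathcal G_n(N)} J_n\Psi_a(x,S)\,dV(x,S).
\]
The family $\{\Psi_a\}$ is smooth in $(a,x)$, and $N$ and $\overline B_\rho^k$ are compact. Hence the function $(a,x,S)\mapsto J_n\Psi_a(x,S)$ is smooth in $a$, with all $a$-derivatives up to order three continuous and uniformly bounded on $\overline B_\rho^k\times\mathcal G_n(N)$. Differentiating under the integral gives
\[
D^2A^V(a)(u,u)=\int D_a^2 J_n\Psi_a(x,S)(u,u)\,dV(x,S).
\]
The integrand is a continuous function on the compact space $\mathcal G_n(N)$. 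It is Lipschitz in $(x,S)$ uniformly in $a$ and in unit vectors $u$.

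From this I would derive three estimates, using that $\mathbf F_V$ metrizes weak convergence on mass-bounded sets and controls integrals of Lipschitz functions. The first is a uniform mass bound: if $\mathbf F_V(V,|\Sigma|)<s\le1$, then $\|V\|(N)\le \mathbf M(\Sigma)+1$. The second is
\[
|D^2A^V(a)(u,u)-D^2A^\Sigma(a)(u,u)|\le C\,\mathbf F_V(V,|\Sigma|)\,|u|^2,
\]
where $C$ depends on the $C^{0,1}$ norm in $(x,S)$ of $D_a^2J_n\Psi_a$. If $\mathbf F_V$ is normalized only with $\|f\|_\infty\le1$ and $\operatorname{Lip}f\le1$, we rescale $f$. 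The third is the analogous bound for $A^V$ and $DA^V$. Theorem \ref{thm:free-boundary-local-minmax}(2) gives $D^2A^\Sigma\le -2c_0|u|^2$ after renaming, and the smooth bound gives $D^2A^\Sigma\ge -c_1'|u|^2$. Choosing $s$ with $Cs<c_0$ then yields \eqref{eq:uniform-concavity} with $c_1=c_1'+c_0$.

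\textbf{Existence and uniqueness of the maximizer.} A strictly concave function has at most one maximizer on the convex set $\overline B_\rho^k$, and one exists by compactness. To place it in the open ball, I use that $A^\Sigma$ has maximum at $0$ with $DA^\Sigma(0)=0$, since $\Sigma$ is free-boundary stationary. Strict concavity gives
\[
\langle DA^\Sigma(a),a\rangle\le -c_0|a|^2 .
\]
In particular $\langle DA^\Sigma(a),a/|a|\rangle\le -c_0\rho$ on $\partial B_\rho^k$. By the $C^1$ estimate, shrinking $s$ so that $Cs<c_0\rho/2$ makes $DA^V$ point strictly inward on $\partial B_\rho^k$. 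Hence the maximizer $m(V)$ is interior and is characterized by $DA^V(m(V))=0$.

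\textbf{Continuity of $m$.} Suppose $V_j\to V$ in the varifold topology, all within $s$ of $|\Sigma|$. The mass bound and weak convergence give $A^{V_j}\to A^V$ uniformly on $\overline B_\rho^k$; the $a$-derivatives of the integrand are equicontinuous, and the Arzelà–Ascoli argument on $\mathcal G_n(N)$ applies. Any limit point of $m(V_j)$ then maximizes $A^V$, so by uniqueness $m(V_j)\to m(V)$. Alternatively, the uniform concavity gives the quantitative bound
\[
|m(V)-m(W)|\le c_0^{-1}\sup|DA^V-DA^W| .
\]

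\textbf{Main obstacle.} The step needing care is the uniform control of $D_a^2J_n\Psi_a(x,S)$ in the Grassmannian variable, so that varifold closeness (not just mass closeness) transfers to $C^2$ closeness of $A^V$. It requires the Jacobian to be jointly smooth on the unoriented Grassmann bundle. This holds because $J_n\Psi_a(x,S)=|\Lambda^nD\Psi_a(x)\xi|$ with $\xi$ a unit simple $n$-vector spanning $S$. The right-hand side is even in $\xi$, and it is bounded below by a positive constant since $\|\Psi_a-\operatorname{Id}\|_{C^1}<\beta$. So the square root is smooth there.
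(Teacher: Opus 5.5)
Your proposal is correct and follows essentially the same route as the paper: the integral representation $A^V(a)=\int J_n\Psi_a\,dV$, transferring the uniform negative Hessian of $A^{|\Sigma|}$ from Theorem~\ref{thm:free-boundary-local-minmax} to nearby $V$, an inward-pointing gradient on $\partial B_\rho^k$ to place the unique maximizer in the interior, and a limit-point argument for continuity of $m$. Your Lipschitz-in-$\mathbf F_V$ bounds make the paper's qualitative continuity-and-compactness step quantitative (and even give Lipschitz dependence of $m(V)$), and your order of choices---fix $\rho$, then shrink $s$ so that $Cs<c_0\rho/2$---is the right one for the boundary condition, which the paper loosely attributes to decreasing $\rho$.
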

\begin{proof}
If \(J\Psi_a(x,P) \) denotes the \(n\)-Jacobian of \(D\Psi_a(x)\) on the plane \(P\), then
\[
 A^V(a)=\int_{G_n(N)}J\Psi_a(x,P)\,dV(x,P).
\]
The family \(\Psi_a\) is smooth in \(a\), and its support is contained in a fixed compact subset.  Hence the functions \(\partial_{a_i}\partial_{a_j}J\Psi_a(x,P)\) are continuous and uniformly bounded on \(\overline B_\rho^k\times G_n(N)\). It follows that \( (V,a)\mapsto D^2A^V(a) \) is continuous, uniformly in \(a\), when \(V\) is equipped with the varifold topology and has uniformly bounded mass.

For \(V=|\Sigma|\), Theorem \ref{thm:free-boundary-local-minmax} gives a uniformly negative Hessian. Compactness of \(\overline B_\rho^k\) therefore gives \eqref{eq:uniform-concavity} after decreasing \(s\). By decreasing \(\rho\), we may also arrange that
\[
 \nabla A^{|\Sigma|}(0)=0
 \quad\text{and}\quad
 \nabla A^{V}(a)\cdot a<0\qquad
|a|=\rho,\,\mathbf F_V(V,|\Sigma|)<s.
\]
Thus \(A^V\) attains its maximum in the interior. Strict concavity makes the maximizer unique.

If \(V_i\to V\), compactness gives a subsequence \(m(V_i)\to b\in\overline B_\rho^k\). Passing to the limit in \(\nabla A^{V_i}(m(V_i))=0\) yields \(\nabla A^V(b)=0\). Uniqueness gives \(b=m(V)\), hence the whole sequence converges.
\end{proof}

\subsection{Application to the First Free-boundary Width}

We briefly recall the min--max notation. We do not need the full construction of the free-boundary min-max theory, and only use its standard consequences. For full construction and related results, see \cite{FBMHI,FBMHII,FBMH1}.

Let \( \mathcal Z_n^0(N,\partial N;\,\cdot\,;\mathbb Z_2) \) denote the space of relative \(n\)-boundaries equipped with $\mathbf F$ or $\mathcal F_{rel}$--topology. Here $\mathbf F(T_1,T_2)=\mathcal F_{rel}(T_1,T_2)+\mathbf{F}_V(|T_1|,|T_2|)$. A $p$-sweepout is a continuous map \( \Phi:X\to \mathcal Z_n^0(N,\partial N;\mathcal F_{rel};\mathbb Z_2) \) from a finite cubical complex \(X\) such that: \begin{enumerate}
\item $\Phi$ has no concentration of mass.
\item $\Phi^*(\lambda^p)\neq 0$ in $H^p(X;\mathbb Z_2)$.
\end{enumerate}

Here $\lambda\in H^1(\mathcal Z^0_n(N,\partial N;\mathbb Z_2);\mathbb Z_2)$ denotes the fundamental cohomology class given by the Almgren isomorphism \cite{ALM} (See also \cite[Section 2.5]{WEYLLAW}). In some works, the requirement that \(\Phi\) be \(\mathcal F_{{rel}}\)-continuous and have no concentration of mass is replaced by the stronger requirement that \(\Phi\) be \(\mathbf F\)-continuous.

The first free-boundary width is defined by \( \omega_1(N,g) := \inf_{\Phi} \sup_{x}\mathbf M(\Phi(x)), \) where the infimum is taken over all one-sweepouts, namely the class $\mathcal P_1$. A minimizing sequence for \(\omega_1\) is a sequence of one-sweepouts \( \{\Phi_j:X\to\mathcal Z^0_n(N,\partial N;\mathbb Z_2)\}_{j=1} \) such that \( \lim_{j\to\infty}\sup_{x}\mathbf M(\Phi_j(x)) = \omega_1(N,g). \) The associated critical set consists of stationary integral varifolds obtained as varifold limits of slices \( |\Phi_j(x_j)| \) with \( \mathbf M(\Phi_j(x_j))\to \omega_1(N,g). \)

We shall use the following standard consequences of the free-boundary min--max theory. By the strong bumpy metric theorem \cite[Theorem 1.3]{FBMH1}, for a generic metric, every embedded free-boundary minimal hypersurface is proper and non-degenerate. Thus under this assumption, embedded free-boundary minimal hypersurfaces produced by the min--max theory are automatically properly embedded. Moreover, by the multiplicity-one theorem in same article \cite[Theorem 1.1]{FBMH1}, for a generic metric in dimensions \(3\le n+1\le 7\), there exists a two-sided, properly embedded, multiplicity-one free-boundary minimal hypersurface \( \Gamma \) realizing \(\omega_1(N^{n+1},g)\). By the Morse index upper bound of \cite{FBMHII}, such a multiplicity-one critical hypersurface satisfies \( \operatorname{index}(\Gamma)\le 1. \) Note that we use the convention
\[
\operatorname{index}(\Gamma)
:=
\sum_{\ell=1}^q \operatorname{index}(\Gamma_\ell),\qquad \Gamma=\Gamma_1+\cdots+\Gamma_q
\]
if \(\Gamma\) is disconnected.

For later use, tracing \cite{FBMH1}, we could first establish the following lemma.
\begin{lemma}\label{lemmaSWZ}
Let \(3\le n+1\le 7\), and let \((N^{n+1},g)\) be a compact Riemannian manifold with boundary. Assume that the bumpy metric \(g\) is chosen generically so that the multiplicity-one theorem and strong bumpy metric theorem \cite[Theorem 1.1, 1.3]{FBMH1} holds.

Denote \(L_k:=\omega_k(N,g).\) Then there exists a connected finite cubical complex \(X\) and a minimizing sequence of $k$-sweepouts $\{\Phi_j:X\to\mathcal Z^0_n(N,\partial N;\mathcal F_{rel};\mathbb Z_2)\}_j$, points $x_j\in X$, such that \begin{enumerate}
\item $\mathbf M(\Phi_{j}(x_j))\to L_k.$
\item $|\Phi_{j}(x_j)|\to |\Gamma|$ as multiplicity one varifold convergence.
\item $\mathcal F_{rel}(\Phi_j(x_j)-\llbracket\Gamma \rrbracket)\to 0.$
\end{enumerate} 
where $\Gamma$ is a smooth, properly embedded, two-sided, multiplicity-one free-boundary minimal hypersurface with $\mathbf M(\Gamma )=L_k$, $\operatorname{Index}({\Gamma})\le k.$ Note that 2 and 3 together gives $\mathbf F_{V}(|\Phi_j(x_j)|,|\Gamma |)\to 0.$
\end{lemma}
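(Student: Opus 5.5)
The plan is to extract the statement by re-running the existence argument of \cite{FBMH1} for the width \(L_k\) and keeping track of the critical sequence that produces \(\Gamma\).

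\textbf{Step 1: a pulled-tight minimizing sequence.} Start from an arbitrary minimizing sequence of \(k\)-sweepouts for \(\omega_k(N,g)\) and apply the free-boundary tightening procedure of \cite{FBMHI,FBMHII}. This replaces it by a sequence \(\{\Phi_j\}\), with the same homotopy class and no larger maximal mass, whose critical set consists entirely of free-boundary stationary varifolds. Since \(X\) may be replaced by the connected component carrying \(\Phi^*(\lambda^k)\neq0\), we may assume \(X\) is connected. Next, invoke the construction behind the multiplicity-one theorem \cite[Theorem 1.1]{FBMH1}. For a generic metric in \(3\le n+1\le7\), it produces a minimizing sequence together with slices \(\Phi_j(x_j)\) with \(\mathbf M(\Phi_j(x_j))\to L_k\), such that \(|\Phi_j(x_j)|\) converges to a varifold \(V=|\Gamma|\). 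Here \(\Gamma\) is smooth, two-sided, and of multiplicity one. Proper embeddedness follows from the strong bumpy metric theorem \cite[Theorem 1.3]{FBMH1}, and \(\operatorname{Index}(\Gamma)\le k\) follows from the index upper bound of \cite{FBMHII}. Items 1 and 2 then follow directly from the way \(x_j\) is chosen. The main care needed here is to check that the sequence in \cite{FBMH1} can be taken to be \(\mathcal F_{rel}\)-continuous with no concentration of mass, or \(\mathbf F\)-continuous, as in our definition of sweepouts. This should be a matter of bookkeeping.

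\textbf{Step 2: flat convergence (the main obstacle).} Varifold convergence alone does not control the currents, so item 3 needs a separate argument. By compactness of relative \(\mathbb Z_2\)-cycles \cite[Theorem 2.3]{WEYLLAW}, after passing to a subsequence we have \(\Phi_j(x_j)\to T\) in \(\mathcal F_{rel}\). The limit satisfies \(\|T\|\le\|V\|=\mathcal H^n\llcorner\Gamma\), so \(T\) is supported on \(\Gamma\). The constancy theorem, applied on each component \(\Gamma_\ell\), then gives \(T=\sum_\ell c_\ell\llbracket\Gamma_\ell\rrbracket\) with \(c_\ell\in\mathbb Z_2\). We must rule out \(c_\ell=0\). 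This is exactly the point where multiplicity one has to be upgraded to a statement about currents, and it is what \cite{FBMH1} actually proves for the slices near the critical set.

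Two routes are available. The first is to use that in \cite{FBMH1} the relevant slices are obtained as PMC/prescribed-mean-curvature boundaries \(\partial\Omega_j\) converging to \(\Gamma\) with multiplicity one. Since these are boundaries of regions, their flat limits carry \(\Gamma\) with coefficient \(1\). The second route is an argument by contradiction. If \(c_\ell=0\), then \(\Gamma_\ell\) arises as a varifold limit with current limit zero. By Allard and Gr\"uter--Jost regularity (Theorems \ref{thm:allard-interior} and \ref{G}), together with the multiplicity-one hypothesis, the slices are locally single \(C^{1,\alpha}\) sheets over \(\Gamma_\ell\), provided they have bounded first variation near the critical set, which the PMC setup supplies. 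A single sheet has mod-2 current limit \(\llbracket\Gamma_\ell\rrbracket\neq0\), which is a contradiction. I would attempt the first route, because \cite{FBMH1} already gives the slices in boundary form. I would fall back on the second route only if that structure is unavailable.

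\textbf{Step 3: conclusion.} Once \(T=\llbracket\Gamma\rrbracket\) is established, the limit is independent of the subsequence, so the whole chosen sequence satisfies item 3. Finally, since \(\mathbf F=\mathcal F_{rel}+\mathbf F_V\), items 2 and 3 together give \(\mathbf F_V(|\Phi_j(x_j)|,|\Gamma|)\to0\), which is the final remark in the statement.
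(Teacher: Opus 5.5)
Your proposal has a genuine gap, and it is in the part of Step 1 you call bookkeeping. In \cite{FBMH1}, \(\Gamma\) is not obtained as a varifold limit of slices of one tightened minimizing sequence for \(\omega_k\). It is the limit of the PMC min--max hypersurfaces \(\Sigma_j=\partial\Omega_j\) for the perturbed functionals \(\mathcal A^{\varepsilon_j h}\). Each \(\Sigma_j\) comes from min--max sequences of Caccioppoli-set maps defined only on a subcomplex \(\widetilde Y\subset X\) with boundary thickening \(\widetilde{\mathbf B}\). Neither the multiplicity-one theorem nor the index bound says that \(|\Gamma|\) lies in the critical set of a minimizing sequence of genuine \(k\)-sweepouts on \(X\). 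So items 1--2 do not follow from the way \(x_j\) is chosen. Producing such slices is most of the proof, in three steps:
\begin{enumerate}
\item For each \(j\), trace Steps B--C of the proof of \cite[Theorem 3.11]{FBMH1} to get slices \(E_{j,r}\) with \(\operatorname{Vol}(E_{j,r}\triangle\Omega_j)\to0\) and \(\mathbf F_V(|\partial E_{j,r}|,|\Sigma_j|)\to0\).
\item Diagonalize in \((j,r)\), using \(L_j\to L\) to keep \(\sup\mathbf M\le L+o(1)\).
\item Glue the maps \(\partial\widetilde\Psi_j\) on \(\widetilde Y\) to the fixed \(\Phi\) on \(X\setminus\widetilde Y\), using the homotopies on \(\widetilde{\mathbf B}\) and a cutoff. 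You must then check three things: no concentration of mass, that the glued map is still a \(k\)-sweepout, and that the selected slice is unchanged. The last uses \(\mathbf M\le L-3\delta_0\) on \(\widetilde{\mathbf B}\), which forces \(x_j\notin\widetilde{\mathbf B}\).
\end{enumerate}

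Step 2 also fails as written. Route 2 needs bounded first variation for the slices \(\Phi_j(x_j)\). Slices of a sweepout have no first-variation control; only min--max limits do, so Allard and Gr\"uter--Jost cannot be applied to them. Route 1 treats the slices as if they were the PMC boundaries \(\partial\Omega_j\), but they are different objects. Route 1 also assumes that boundaries of regions converging to \(|\Gamma|\) with multiplicity one must have flat limit \(\llbracket\Gamma\rrbracket\), and this is false. Take \(E_j\) to be a row of slabs along \(\Gamma\), of thickness \(t_j\) and length \(\delta_j\), separated by gaps of length \(\delta_j\), with \(\delta_j\to0\) and \(t_j/\delta_j\to0\). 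Then \(|\partial E_j|\to|\Gamma|\) with multiplicity one, while \(\partial E_j\to0\) in flat norm. Ruling out this cancellation is exactly what Step C of \cite[Theorem 3.11]{FBMH1} does, and it applies only to slices \(E_j\) that are close in volume to \(\Omega_j\). The correct argument is as follows. After a subsequence, \(\Omega_j\to\Omega\) in \(L^1\). Single-sheeted graphical convergence of \(\Sigma_j\) to \(\Gamma\) away from a finite set gives \(\partial\Omega=\llbracket\Gamma\rrbracket\). Hence \(\mathcal F_{rel}(\partial E_j-\llbracket\Gamma\rrbracket)\le\operatorname{Vol}(E_j\triangle\Omega_j)+\operatorname{Vol}(\Omega_j\triangle\Omega)\to0\). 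Your constancy-theorem reduction is harmless, but on its own it cannot determine the coefficients \(c_\ell\).
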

\begin{proof}1 and 2 are standard for minimizing sequences under min--max setting, we only need to show 3. We shall heritage all notations in \cite{FBMH1}.

\medskip
\noindent
\textbf{Step 1.} As in the proof of \cite[Theorem 4.7]{FBMH1}, fix $k$ and ignore subscript $k$ below, we could pick a continuous $k$-sweepout $\Phi_0:X\to \mathcal Z^0_n(N,\partial N;\mathbf F;\mathbb Z_2)$ and denote its $\mathcal F_{rel}$--homotopy class by $[\Phi_0]=\Pi$, with $L(\Pi):=\inf_{\Phi\in \Pi}\sup_{x}\mathbf M(\Phi(x))=\omega_k(N,g)=L$. Here we may assume that $X$ is connected. We could also get a minimizing sequence $\{\Phi_i:X\to\mathcal Z^0_n(N,\partial N;\mathbb Z_2)\}_i$ in the fixed class $\Pi$, deformed as in \cite[Proof of Theorem 4.7, Step I, II]{FBMH1}.

Repeat the argument in \cite[Proof of Theorem 4.7, Step I, II]{FBMH1}, for all sufficient large $i$, would give:\begin{itemize}
\item Subcomplex $\widetilde Y_i\subset X$ and boundary thickening subcomplex $\widetilde{\mathbf B}_i\subset \widetilde Y_i$
\item A lifting to Caccioppoli sets $\widetilde \Phi_i:\widetilde Y_i\to \mathcal C(N)$ such that $\partial \widetilde \Phi_i=\Phi_i|_{\widetilde Y_i} $
\item The associated $(\widetilde Y_i,\widetilde{\mathbf B}_i)$-homotopy class $[\widetilde \Phi_i]=:\widetilde \Pi_i$
\end{itemize}
Furthermore, from \cite[Lemma 4.8]{FBMH1} we also have $\operatorname{max}_{\widetilde{\mathbf B}_i}\mathbf M(\partial\widetilde\Phi_i(x))<L\le L(\widetilde\Pi_i)\to L$. In particular, for $i$ sufficiently large, have $L= L(\widetilde\Pi_i)$ \cite[Proof of Theorem 4.7, Step III]{FBMH1}. We fix a such $i=i_0$ and ignore the subscript $i$, by \cite[Proof of Theorem 4.7, Claim 5]{FBMH1}, $\exists \delta_0>0$, such that $\operatorname{max}_{\widetilde{\mathbf B}}\mathbf M(\partial\widetilde\Phi(x))\le L-4 \delta_0 $ and $\operatorname{max}_{X\setminus\widetilde{Y}}\mathbf M(\Phi(x))\le L-4 \delta_0 $.

\medskip
\noindent
\textbf{Step 2.} Let $\varepsilon_j\to 0$ be a sequence of positive numbers, and choose $h$ as in \cite[Proof of Theorem 4.1]{FBMH1}, such that $\varepsilon_j h\in\mathcal S(g)$ for each $j.$ Here $\mathcal S(g)$ denotes the class of Morse functions satisfying the genericity assumptions with respect to metric $g$ (For precise definition, see \cite[Section 2.2]{FBMH1}). For a Caccioppoli set $E\in \mathcal C(N),$ consider the functional \(\mathcal A^{\varepsilon_j h}(E):=\mathbf M(\partial E)-\varepsilon_j\int_E h\) and denote by $L_j:=L^{\varepsilon_j h}(\widetilde\Pi)$ the corresponding width. Consequently \[|\mathcal A^{\varepsilon_j h}(E)-\mathbf M(\partial E)|\le \varepsilon_j\|h\|_{C^0(N)}\operatorname{Vol}(N), \qquad |L_j-L(\widetilde{\Pi})|\le \varepsilon_j\|h\|_{C^0(N)}\operatorname{Vol}(N).\] Hence $L_j\to L $ and $L_j>\operatorname{max}_{\widetilde{\mathbf B}}\mathcal A^{\varepsilon_j h}(\widetilde\Phi(x))$ for $j$ sufficient large. Thus, \cite[Theorem 3.11]{FBMH1} applies to the class $\widetilde{\Pi}, $ and hence for each fixed $j$ the theorem produces an almost embedded free-boundary $\varepsilon_j h$--hypersurface $\Sigma_j=\partial \Omega_j$, where $\Omega_j\in\mathcal C(N).$ Moreover we also have $\mathcal A^{\varepsilon_j h}(\Omega_j)=L_j$ and weak Morse index $\operatorname{Index}_\omega{\Sigma_j}\le k$ (For precise definition of weak Morse index, see \cite[Definition 2.6]{FBMH1}).

Now we trace \cite[Proof of Theorem 3.11, Step B, C]{FBMH1} for further properties. For each fixed $j$, after taking the Almgren extension of discrete maps appearing in Step B, there exists a minimizing sequence $\widetilde{\Psi}_{j,r}:\widetilde{Y}\to \mathcal C(N)$, points $x_{j,r}\in \widetilde{Y}$ and associated Caccioppoli sets $E_{j,r}:= \widetilde{\Psi}_{j,r}(x_{j,r})$ such that as $r\to\infty$:
\begin{itemize}
\item $\operatorname{sup}_{\widetilde{Y}} \mathcal A^{\varepsilon_j h}(\widetilde{\Psi}_{j,r}(x))\to L_j$
\item $\mathcal A^{\varepsilon_j h}(E_{j,r})\to L_j$
\item $\operatorname{Vol}(E_{j,r}\triangle \Omega_j)\to 0$
\item $\mathbf F_V(|\partial E_{j,r}|,|\Sigma_j|)\to 0$
\end{itemize}
Note that last two convergence come from Step C. In particular, the argument there rules out cancellation and proves that the varifold obtained in Step B is exactly $|\partial \Omega_j|$.

Since $\{\widetilde{\Psi}_{j,r}\}_r$ belong to the class $\widetilde{\Pi}$, there exist homotopies
\[
\begin{array}{c@{\qquad}c}
\vcenter{\hbox{$\widetilde{H}_{j,r}:[0,1]\times \widetilde{Y}\to\mathcal C(N)$,\qquad with }}
&
\begin{cases}
\widetilde{H}_{j,r}(0,\,\cdot\,)=\widetilde{\Phi}\\
\widetilde{H}_{j,r}(1,\,\cdot\,)=\widetilde{\Psi}_{j,r}\\
\sup\limits_{[0,1]\times \tilde{\mathbf B}}\mathbf F_{\mathcal C} (\widetilde{H}_{j,r}(t,x),\widetilde{\Phi}(x))\xrightarrow{r\to \infty} 0
\end{cases}
\end{array}
\]
where $\mathbf F_{\mathcal C}(E,F)=\mathcal F_{rel}(E-F)+\mathbf F_V(|\partial E|,|\partial F|)$.

Now we could perform a diagonal selection. For each $j$, choose $r(j)$ sufficiently large such that \begin{itemize}
\item $\operatorname{sup}_{\tilde{Y}}\mathcal A^{\varepsilon_j h}(\widetilde{\Psi}_{j,r(j)})\le L_j+\frac1j$
\item $|\mathcal A^{\varepsilon_j h}(E_{j,r(j)})-L_j|<\frac1j$
\item $\operatorname{Vol}(E_{j,r(j)}\triangle \Omega_j)<\frac1j$
\item $\mathbf F_V(|\partial E_{j,r(j)}|,|\Sigma_j|)<\frac1j$
\item $\operatorname{sup}_{[0,1]\times \tilde{\mathbf B}}\mathbf F_{\mathcal C} (\widetilde{H}_{j,r(j)}(t,x),\widetilde{\Phi}(x))<\frac1j$
\end{itemize}
For simplicity, for the rest of proof we write $\widetilde{\Psi}_{j}:=\widetilde{\Psi}_{j,r(j)},\,\widetilde{H}_{j}:=\widetilde{H}_{j,r(j)},\,E_{j}:=E_{j,r(j)},\,x_j:=x_{j,r(j)}.$ Recall that $L_j\to L$ would give $\operatorname{sup}_{ \widetilde{Y}}\mathbf M(\partial \widetilde{\Psi}_{j}(x))\le L+o(1)$, $\mathbf M(\partial E_j)\to L,$ and for $j $ sufficiently large, $\operatorname{sup}_{[0,1]\times \tilde{\mathbf B}}\mathbf M(\partial\widetilde{H}_{j}(t,x))<L-3\delta_0$. This implies $x_j\notin \widetilde{\mathbf B}$ for $j$ sufficiently large.

\medskip
\noindent
\textbf{Step 3.} By \cite[Theorem 2.9]{FBMH1} and \cite[Proof of Theorem 4.1]{FBMH1}, after passing to a subsequence, $|\Sigma_j|\to|\Gamma|$ as varifolds, where $\Gamma$ is a smooth, properly embedded, two-sided, multiplicity-one free-boundary minimal hypersurface satisfying $\mathbf M(\Gamma )=L,\,\operatorname{Index}({\Gamma})\le k.$ By BV compactness, after passing to a further subsequence, there exists $\Omega\in\mathcal C(N)$ such that $\Omega_j\to \Omega$ in $L^1(N) $ (Here we identify a Caccioppoli set and its characteristic function). The convergence $\Sigma_j\to \Gamma$ is locally smooth and graphical away from a finite set $\mathcal Y\subset \Gamma.$ Since the limiting multiplicity is one, the convergence consists of a single graph over $\Gamma\setminus\mathcal Y.$ It follows that $\partial \Omega =\llbracket\Gamma\rrbracket$ in $\mathcal Z^0_n(N,\partial N;\mathbb Z_2).$ Recall $\operatorname{Vol}(E_{j}\triangle \Omega_j)<\frac1j$ and $\mathbf F_V(|\partial E_{j}|,|\Sigma_j|)<\frac1j$ give that \begin{itemize}
\item $\mathcal F_{rel}(\partial E_j-\llbracket\Gamma\rrbracket)\le \operatorname{Vol}(E_{j}\triangle \Omega_j)+ \operatorname{Vol}(\Omega\triangle \Omega_j)\to 0$
\item $\mathbf F_V(|\partial E_{j}|,|\Gamma|)\to 0$
\end{itemize}
Thus $\partial E_{j}$ simultaneously converge both in $\mathcal F_{rel}$ and varifold topologies.

\medskip
\noindent
\textbf{Step 4.} In this step we use the cutoff gluing argument in \cite[Proof of Lemma 4.8]{FBMH1}. Since $\widetilde{\mathbf B}$ is a boundary thickening, we may choose a continuous function \[\rho:\widetilde{Y}\to [0,1]\qquad\text{such that }\begin{cases} \rho=1 & \text{on } \widetilde{Y}\setminus \widetilde{\mathbf B}\\ \rho=0 & \text{along the interface between } \widetilde{Y} \text{and } X\setminus\widetilde{Y}
\end{cases}
\]
Define \[ H_{j}(t,x)=\begin{cases} \Phi(x) & x\in X\setminus\widetilde{Y}\\ \partial \widetilde{H}_j(t\rho(x),x)&x\in \widetilde{Y}
  \end{cases}
  \]
Along the gluing interface, $\rho=0$, and hence $\partial \widetilde{H}_j(t\rho(x),x)=\partial \widetilde{H}_j(0,x)=\partial \widetilde{\Phi}(x)= \Phi(x)$. Therefore $H_j$ is a $\mathcal F_{rel}$-continuous homotopy. Set $\widehat{\Psi}_j:=H_j(1,\,\cdot\,)$, then $\widehat{\Psi}_j\simeq_{\mathcal F_{rel}}\Phi$, and hence represents the same nontrivial $k$-sweepout class. Recall that $\operatorname{max}_{X\setminus\widetilde{Y}}\mathbf M(\Phi(x))\le L-4 \delta_0 $ and $\operatorname{sup}_{[0,1]\times \tilde{\mathbf B}}\mathbf M(\partial\widetilde{H}_{j}(t,x))<L-3\delta_0$ would have:
\begin{itemize}
\item $\mathbf M(\widehat \Psi_j(x))\le L-4 \delta_0 $ on $X\setminus\widetilde{Y}$
\item $\mathbf M(\widehat \Psi_j(x))\le L-3 \delta_0 $ on $\widetilde{\mathbf B}$
\item $\widehat{\Psi}_j(x)=\partial \widetilde{\Psi}_j(x)$ on $\widetilde{Y}\setminus\widetilde{\mathbf B}$
\end{itemize}
By $\operatorname{sup}_{ \widetilde{Y}}\mathbf M(\partial \widetilde{\Psi}_{j}(x))\le L+o(1)$, consequently $\operatorname{sup}_{ X}\mathbf M( \widehat{\Psi}_{j}(x))\le L+o(1)$. Exactly as in the last paragraph of \cite[Proof of Lemma 4.8]{FBMH1}, the $\mathbf F_{\mathcal C}$-control on $\widetilde{\mathbf B}$ implies that the sequence $\{\widehat{\Psi}_j\}$ has no concentration of mass. Hence each $\widehat{\Psi}_j$ is a $k$-sweepout and $\operatorname{sup}_{ X}\mathbf M( \widehat{\Psi}_{j})\ge L$. Thus $\operatorname{sup}_{ X}\mathbf M( \widehat{\Psi}_{j})\to L$.

By Step 2, $x_j\notin \widetilde{\mathbf B}$, $\rho(x_j)=1$. Therefore the gluing leaves the selected slice unchanged, i.e. $\widehat{\Psi}_j(x_j)=\partial \widetilde{H}_j(1,x_j)=\partial\widetilde{\Psi}_j(x_j)=\partial E_j$. Combining this with convergence result in Step 3, using triangle inequality we have\begin{itemize}
\item $\mathcal F_{rel}(\widehat{\Psi}_j(x_j)-\llbracket\Gamma\rrbracket)\to 0$
\item $\mathbf F_V(|\widehat{\Psi}_j(x_j)|,|\Gamma|)\to 0$
\end{itemize} 
Relabel $\widehat{\Psi}_j$ as $\Phi_j$ would complete the proof.\end{proof}

\begin{theorem}
\label{thm:index-one-omega-one}
Under the hypotheses of Lemma~\ref{lemmaSWZ}, there exists a smooth, properly embedded, two-sided, multiplicity-one free-boundary minimal hypersurface \(\Gamma\) such that
\[
\mathbf M(\Gamma)=\omega_1(N,g)
\qquad\text{and}\qquad
\operatorname{Index}(\Gamma)=1.
\]

\end{theorem}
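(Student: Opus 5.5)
Apply Lemma~\ref{lemmaSWZ} with $k=1$. It gives a minimizing sequence of one-sweepouts $\Phi_j:X\to\mathcal Z^0_n(N,\partial N;\mathcal F_{rel};\mathbb Z_2)$, points $x_j$, and a smooth, properly embedded, two-sided, multiplicity-one free-boundary minimal hypersurface $\Gamma$ with $\mathbf M(\Gamma)=\omega_1(N,g)$ and $\operatorname{Index}(\Gamma)\le 1$. Moreover $\mathbf M(\Phi_j(x_j))\to\omega_1$ and $\mathcal F_{rel}(\Phi_j(x_j)-\llbracket\Gamma\rrbracket)\to0$. Since $g$ is bumpy, $\Gamma$ is non-degenerate, so it suffices to rule out $\operatorname{Index}(\Gamma)=0$. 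Suppose, for contradiction, that $\Gamma$ is stable. Then it is strictly stable, and the first part of Corollary~\ref{cor: two enhance} applies with $\mathbb Z_2$ coefficients: there is $\varepsilon>0$ such that every relative cycle $S$ homologous to $\llbracket\Gamma\rrbracket$ with $\mathcal F_{rel}(S-\llbracket\Gamma\rrbracket)<\varepsilon$ satisfies $\mathbf M(S)\ge\mathbf M(\Gamma)$, with equality only if $S=\llbracket\Gamma\rrbracket$. The slices are relative boundaries, so $\llbracket\Gamma\rrbracket$ is homologically trivial, and $\mathcal F_{rel}$-closeness keeps nearby slices in the same (trivial) class. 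This is the avoidance principle.

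Next, show that strict local minimality contradicts the min--max characterization. Since $\Gamma$ is a strict local minimizer in an $\mathcal F_{rel}$-ball, there is $\delta>0$ and a small annular region $\varepsilon/2\le\mathcal F_{rel}(S-\llbracket\Gamma\rrbracket)\le\varepsilon$ on which $\mathbf M(S)\ge\mathbf M(\Gamma)+\delta$. To get this, argue by contradiction as in Proposition~\ref{thm:flat-neighborhood-local-minimality}. A sequence in the annulus with masses tending to $\mathbf M(\Gamma)$ has a limit in the annulus of mass $\le\mathbf M(\Gamma)$ by lower semicontinuity. That limit is not $\llbracket\Gamma\rrbracket$ and has mass at most $\mathbf M(\Gamma)$, contradicting uniqueness. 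For large $j$ we have $\sup_X\mathbf M(\Phi_j)<\omega_1+\delta$, so the slices of $\Phi_j$ never enter the annulus. Since $X$ is connected and $\mathcal F_{rel}(\Phi_j(x)-\llbracket\Gamma\rrbracket)$ is continuous in $x$, the image of $\Phi_j$ lies either entirely inside the $\varepsilon/2$-ball or entirely outside it. The first case would make $\Phi_j$ homotopically trivial: the $\mathcal F_{rel}$-ball is contractible by the isoperimetric/Almgren argument for small flat balls, so $\Phi_j^*\lambda=0$, contradicting that $\Phi_j$ is a one-sweepout. Hence the images stay outside the $\varepsilon/2$-ball, contradicting $\mathcal F_{rel}(\Phi_j(x_j)-\llbracket\Gamma\rrbracket)\to0$. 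Therefore $\operatorname{Index}(\Gamma)\ge1$, and so $\operatorname{Index}(\Gamma)=1$.

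The main obstacle is making the ``connectedness/annulus'' step rigorous for a general one-sweepout. The annulus mass gap is fine. However, excluding the case where the entire sweepout lies near $\llbracket\Gamma\rrbracket$ requires that small $\mathcal F_{rel}$-balls carry no nontrivial $\lambda$. I would handle this through the Almgren isomorphism: a loop in a small flat ball lifts to a loop of Caccioppoli sets with small volume change, hence has zero $\lambda$-pairing. Alternatively, if that becomes delicate, I would replace it by a pull-tight/deformation argument. The sweepout is deformed near $\Gamma$ by a mass-decreasing retraction toward $\Gamma$ (using strict local minimality, as in Marques--Neves' index argument). This produces a sweepout in the same class with max mass bounded away from $\omega_1$ near $\Gamma$. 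Then the critical set of a tightened minimizing sequence cannot contain $|\Gamma|$, contradicting Lemma~\ref{lemmaSWZ}(2).
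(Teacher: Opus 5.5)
Your proposal is correct and is essentially the paper's proof. Both assume index zero and upgrade to strict stability via bumpiness, invoke the $\mathcal F_{\mathrm{rel}}$-local minimality of Corollary~\ref{cor: two enhance}, get a uniform mass gap on a shell by compactness and lower semicontinuity, use connectedness of $X$ to force $\Phi_j(X)$ into a small $\mathcal F_{\mathrm{rel}}$-ball where the Almgren class vanishes (the paper cites \cite[Proposition 2.12]{WEYLLAW}), and then contradict either the sweepout property or item 3 of Lemma~\ref{lemmaSWZ}. Your shell $\{\varepsilon/2\le\mathcal F_{\mathrm{rel}}(S-\llbracket\Gamma\rrbracket)\le\varepsilon\}$ plays the role of the paper's $\mathcal U\setminus\mathcal U'$, and the pull-tight fallback you sketch is not needed.
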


\begin{proof}
Fix $k=1$ and let \( \Gamma,\, \{\Phi_j\},\,\{x_j\} \) be furnished by Lemma~\ref{lemmaSWZ}. In particular, \(\operatorname{index}(\Gamma)\leq1\). It remains to prove that \( \operatorname{index}(\Gamma)\ne0. \) Suppose by contradiction that \( \operatorname{index}(\Gamma)=0. \) Then every component \(\Gamma_\ell\) is stable. Since \(g\) is bumpy, no component admits a nontrivial free-boundary Jacobi field. Hence each \(\Gamma_\ell\) is strictly stable, and therefore the whole multiplicity-one cycle \( \Gamma \) is strictly stable.

By Corollary~\ref{cor: two enhance}, there exists a $\mathcal F_{rel}$--neighborhood \( \mathcal U \) of \( \llbracket\Gamma\rrbracket \) such that \(\llbracket\Gamma\rrbracket\) is the unique mass minimizer in its relative homology class inside \(\overline{\mathcal U}^{\,\mathcal F_{rel}}\). By compactness and lower semicontinuity, we may choose another small $\mathcal F_{rel}$-neighborhood \( \mathcal U'\Subset\mathcal U\) and \(\delta>0 \) such that
\[
T\in \mathcal U\setminus \mathcal U'
\quad\Longrightarrow\quad
\mathbf M(T)\ge L+2\delta.
\] Shrinking \(\mathcal U\) if necessary, by $\mathcal F_{rel}$-isoperimetric lemma \cite[Lemma 3.15]{FBMHI}, we may assume that every relative cycle in \(\mathcal U\) represents the same relative homology class. 
Since 
\(
\{\Phi_j\}
\) 
is minimizing for \(\omega_1(N,g)\), for all sufficiently large \(j\) we have 
\(
\sup_{x}\mathbf M(\Phi_j(x))<L+\delta.
\) 
We claim that, for all sufficiently large \(j\),
 \(
\Phi_j(X)\cap\mathcal U'=\varnothing.
\) 
\begin{proof}[Proof of Claim]

Suppose not, then there exist \(j\) and \(x_0\in X\) such that 
\(
\Phi_j(x_0)\in\mathcal U'.
\) 
Since \(X\) is connected, either 
\(
\Phi_j(X)\subset\mathcal U
\) 
or else \(\Phi_j(X)\) intersects 
\(
\mathcal U\setminus\mathcal U'.
\) 
In the second case, there exists \(x_1\in X\) such that 
\(
\Phi_j(x_1)\in\mathcal U\setminus\mathcal U'.
\) 
Hence 
\(
\mathbf M(\Phi_j(x_1))\ge L+2\delta,
\) 
contradicting 
\(
\sup_{x}\mathbf M(\Phi_j(x))<L+\delta.
\) 
Thus the only remaining possibility is 
\(
\Phi_j(X)\subset\mathcal U.
\) 

But by our choice of \(\mathcal U\), it is trivial for the first sweepout class, i.e. the Almgren cohomology class detecting one-sweepouts vanishes on \(\mathcal U\) \cite[Proposition 2.12]{WEYLLAW}. Therefore a map whose image is contained in \(\mathcal U\) cannot represent a nontrivial one-sweepout. This contradicts the fact that \(\Phi_j\) belongs to the minimizing sequence for \(\omega_1(N,g)\). The claim follows.\end{proof}

On the other hand, by Lemma \ref{lemmaSWZ}, \(\Phi_{j}(x_j)\in\mathcal U'\) for \(j\) sufficiently large, contradicting the claim. Therefore 
\(
\operatorname{index}(\Gamma)=1.
\)\end{proof}



The preceding application should be regarded as a first use of the $\mathcal F_{rel}$-neighborhood local minimality theorem. We expect that the theorem may be useful in other geometric variational problems.

\paragraph{Conflit of Interest}
On behalf of all authors, the corresponding author states that there is no conflict of interest.

\paragraph{Data Availability}
Data sharing is not applicable to this article, as no datasets were generated or analyzed during the current study.

\paragraph{Funding}
The authors are supported by the National Natural Science Foundation of China (Grant no. 12371055). 

\begin{flushleft}
\medskip\noindent
\begin{tabbing}

				Xiaoxiang Jiao\\
				School of Mathematical Sciences, University of Chinese Academy of Sciences\\
				19A Yuquan Road, Beijing, 100049, China\\
				\texttt{xxjiao@ucas.ac.cn}
				
\end{tabbing}

\begin{tabbing}
Qinhan Zhao\\School of Mathematical Sciences, University of Chinese Academy of Sciences\\
				19A Yuquan Road, Beijing, 100049, China\\
				\texttt{zhaoqinhan22@mails.ucas.ac.cn}
\end{tabbing}

\begin{tabbing}
Hangyue Zhu\\School of Mathematical Sciences, University of Chinese Academy of Sciences\\
				19A Yuquan Road, Beijing, 100049, China\\
				\texttt{zhuhangyue24@mails.ucas.ac.cn}
\end{tabbing}

\end{flushleft}

\bibliographystyle{alpha}
\bibliography{ref}

\end{document}